\pdfoutput=1

\documentclass[final, reqno, 10pt]{amsart}

 \usepackage{amsmath,amsthm,amsfonts}
\usepackage{latexsym,mathabx}
\usepackage{amssymb}
\usepackage{bbold}
\usepackage{enumerate}

\newtheorem{theorem}{Theorem}[section]
\newtheorem{lemma}[theorem]{Lemma}
\newtheorem{prop}[theorem]{Proposition}
\newtheorem{corollary}[theorem]{Corollary}
\newtheorem{cor}[theorem]{Corollary}
\theoremstyle{remark}

\newtheorem{definition}{Definition}[section]
\newcommand{\set}{\mathbb}
\newcommand{\dl}{\nabla}
\newcommand{\les}{\lesssim}
\renewcommand{\frak}{\mathfrak}

\newcommand{\mc}{\mathcal}
\newcommand{\be}{\begin{equation}}
\newcommand{\ee}{\end{equation}}
\newcommand{\bee}{\begin{align}}
\newcommand{\eee}{\end{align}}
\newcommand{\ba}{\begin{array}}

\newcommand{\ea}{\end{array}}
\newcommand{\bpm}{\begin{pmatrix}}
\newcommand{\epm}{\end{pmatrix}}
\newcommand{\lb}{\label}
\newcommand{\la}{\langle}
\newcommand{\ra}{\rangle}
\def\calH{\mc{H}}
\def\calL{\mc{L}}
\def\wtil{\widetilde}

\DeclareMathOperator{\Ran}{Ran}

\DeclareMathOperator{\Imim}{Im}

\newcommand{\oast}{\circledast}
\DeclareMathOperator*{\slim}{s-lim}

\newcommand{\ov}{\overline}
\newcommand{\dd}{{\,}{d}}

\renewcommand{\Im}{\Imim}
\newcommand{\R}{\mathbb{R}}

\newcommand{\Z}{\mathbb{Z}}
\def\Sph{\mathbb{S}}

\newcommand{\B}{\mathcal{B}}

\newcommand{\one}{{\mathbb{1}}}
\newcommand{\EQ}[1]{\begin{equation}  \begin{split} #1 \end{split} \end{equation} }
\def\embed{\hookrightarrow}
\def\const{\mathrm{const}\cdot}
\def\eps{\varepsilon}
\def\what{\widehat}
\def\mes{\mc{M}}
\def\calB{\mc{B}}
\def\calF{\mc{F}}
\def\calS{\mc{S}}
\def\calD{\mc{D}}
\def\nn{\nonumber}

\def\cB{\calB}

\numberwithin{equation}{section}

\begin{document}

\title[Structure formulas for wave operators]{Structure formulas for wave operators}

\author{M.\ Beceanu}
\thanks{The first author thanks the University of Chicago for its hospitality during the summers of 2015 and 2016.}
\address{Department of Mathematics, University at Albany, State University of New York, 1400 Washington Avenue, Albany, NY 12222 }
\email{mbeceanu@albany.edu}

\author{W.\ Schlag}
\thanks{The second author was partially supported by NSF grant DMS-1500696
during the preparation of this work.}
\address{Department of Mathematics, The University of Chicago, 5734 South University Avenue, 
Chicago, IL 60637}
\email{schlag@math.uchicago.edu}

\begin{abstract}
We revisit the structure formula for the intertwining wave operators $W_{\pm}$ associated with $H=-\Delta+V$ in $\R^{3}$ under suitable decay conditions on~$V$. We establish quantitative bounds on the structure function. Throughout we  assume that $0$ energy is  regular for $H=-\Delta+V$. 
\end{abstract}

\maketitle

\section{Introduction}
\lb{sec:intro}

In this paper we revisit the study of wave operators for $H=-\Delta+V$ in three dimensions. Define $B^{\beta}$, $\beta\ge0$, as the subspace of $L^{2}$ consisting of functions with the property that 
\[
\|f\|_{B^{\beta}}:= \| \one_{[|x|\le 1]} f\|_{2} + \sum_{j=0}^{\infty} 2^{j\beta} \big\| \one_{[2^{j}\le |x|\le 2^{j+1}]} f \big\|_{2} <\infty
\]
Then for $V\in B^{\beta}(\R^{3})$, $\beta\ge\frac12$, $V$ real-valued, the wave operators 
\[
W_{\pm} = \lim_{t\to\pm \infty} e^{itH} e^{-itH_{0}}
\]
exist in the usual strong $L^{2}$ sense, with $H_{0}=-\Delta$. They are isometries from $L^2$ onto the absolutely continuous spectral subspace of $H$ in $L^{2}$, and there is no singular continuous spectrum (asymptotic completeness), see Sections~\ref{sec:spectral}, \ref{sec:WOP}.  In a series of papers,  Yajima~\cite{yajima0,yajima,yajima6,yajima8}, established the $L^{p}$ boundedness of the wave operators assuming that zero energy is neither an eigenvalue nor a resonance (and later also obtained more restrictive  results of this nature if this condition fails). These results are very useful for nonlinear dispersive wave equations, since by the intertwining property $W_{\pm} f(H)P_{c}= f(H_{0})W_{\pm}$, $H_{0}:=-\Delta$, we may transfer Strichartz estimates from the free case $H_{0}$ to the perturbed evolution of $H$. 

The first author combined some of Yajima's formalism with his Wiener algebra technique~\cite{bec,becgol} to obtain a {\em structure formula} for the wave operators~\cite{bec1}.  In fact, he showed that the wave operators act on functions by the superposition of elementary operations. The paper~\cite{bec1} is not entirely accurate.  The first result of this paper is to present a corrected version of the structure formula. By $B^{1+}$ we mean $B^{\beta}$ for some $\beta>1$. In Section~\ref{sec:spectral} we will define what a zero energy resonance or eigenfunction means in this context. $\mc M$ denotes signed Borel measures.  
We also go beyond~\cite{bec1} by obtaining quantitative control of the structure function as well as stability estimates for it. 

\begin{theorem}\lb{thm:struct}
Assume that $V \in B^{1+}$ is real-valued and that $H = -\Delta+V$ admits no eigenfunction or resonance at zero energy. There exists  $g(x, y, \omega) \in L^1_\omega \mc M_y L^\infty_x$, i.e., 
\EQ{\lb{eq:gfin}
  \int_{\Sph^2} \|g(x, dy, \omega)\|_{\mc M_y L^{\infty}_x}  \dd \omega < \infty
} 
such that for $f \in L^2$ one has the representation formula
\be\begin{aligned}\lb{eqn1.8}
(W_{+} f)(x) &= f(x) + \int_{\Sph^2} \int_{\R^3} g(x, dy, \omega) f(S_\omega x - y)   \dd \omega.
\end{aligned}\ee
where $S_{\omega}x=x-2(x\cdot\omega)\omega$ is a reflection. A similar result holds for $W_{-}$. 

Let $X$ be any Banach space of measurable functions on $\R^3$ which is invariant under translations and reflections, and in which Schwartz functions are dense. Assume further that  $\|\one_H  f\|_X\le A\|f\|_X$ for all half spaces $H\subset\R^3$ and $f\in X$ with some uniform constant~$A$. Then 
\EQ{\label{eq:HW}
\| W_+ f\|_X \le AC(V) \|f\|_X \qquad\forall\; f\in X
}
where $C(V)$ is a constant depending on $V$ alone. 
\end{theorem}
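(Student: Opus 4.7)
\textbf{Proof plan for the bound \eqref{eq:HW}.} The strategy is a Minkowski-based reduction to a single $\omega$-slice, followed by exploitation of the specific $x$-dependence of the kernel $g$ that will be delivered by the construction in the first half of the theorem.

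By density of Schwartz functions in $X$, it suffices to prove the bound for $f \in \mc S$, for which the integrand in \eqref{eqn1.8} is absolutely convergent in view of \eqref{eq:gfin}. Writing $W_+ f - f = R f$ with
\[
Rf(x) = \int_{\Sph^2} T_\omega f(x) \dd \omega, \qquad T_\omega f(x) := \int_{\R^3} g(x, dy, \omega)\, f(S_\omega x - y),
\]
the integral Minkowski inequality (valid in any Banach space) reduces everything to the slicewise bound
\[
\| T_\omega f \|_X \le A\, \|g(x, dy, \omega)\|_{\mc M_y L^\infty_x}\, \|f\|_X, \qquad \omega \in \Sph^2.
\]
Integrating in $\omega$, using \eqref{eq:gfin}, and adding the trivial bound $\|f\|_X \le \|f\|_X$ then yields \eqref{eq:HW} with $C(V) = 1 + \int_{\Sph^2} \|g(x, dy, \omega)\|_{\mc M_y L^\infty_x} \dd\omega$ (absorbing the $1$ into $A$ via the observation that $A \ge 1$, as one sees by testing the hypothesis on any bump supported in a half-space).

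The key structural input, which I would extract from the construction of $g$ in the first part of the theorem, is that the $x$-dependence of $g(x, dy, \omega)$ is \emph{of half-space type}: one expects a representation
\[
g(x, dy, \omega) = \one_{H(y, \omega)}(x)\, g_0(dy, \omega)
\]
(possibly as a finite sum of such terms), where each $H(y, \omega) \subset \R^3$ is a half-space depending measurably on $(y, \omega)$, and $g_0(\cdot, \omega)$ is a scalar signed Borel measure whose total variation is comparable to $\|g(x, dy, \omega)\|_{\mc M_y L^\infty_x}$. Granting this, Minkowski in the $y$-variable gives
\[
\|T_\omega f\|_X \le \int_{\R^3} |g_0|(dy, \omega)\, \bigl\| \one_{H(y, \omega)}(\cdot)\, f(S_\omega \cdot - y) \bigr\|_X,
\]
and for each fixed $y$ the half-space multiplier hypothesis together with translation and reflection invariance of $\|\cdot\|_X$ gives $\bigl\| \one_{H(y, \omega)}(\cdot)\, f(S_\omega \cdot - y) \bigr\|_X \le A \|f(S_\omega \cdot - y)\|_X = A \|f\|_X$, which closes the slicewise estimate.

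The main obstacle is the structural refinement of \eqref{eq:gfin} invoked above: one must trace through the Wiener-algebra / Born-series construction of $g$ and verify that its $x$-dependence is expressed purely through characteristic functions of affine half-spaces rather than through more general bounded functions of $x$ (for which no multiplier hypothesis on $X$ is assumed). Morally this half-space structure is forced by the Heaviside step factors that arise when integrating the free resolvent kernel $\tfrac{e^{i\lambda|x-y|}}{4\pi |x-y|}$ in the spectral parameter $\lambda$ along a half-line, but verifying it line-by-line through the symbolic manipulations and contour deformations is the work. Once the half-space form of $g$ is confirmed, the Minkowski reductions and the two hypotheses on $X$ assemble the rest of the argument mechanically.
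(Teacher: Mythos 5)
Your reduction of \eqref{eq:HW} to a slicewise bound via Minkowski's integral inequality, the further splitting by Minkowski in $y$, and the use of the half-space multiplier hypothesis together with translation and reflection invariance of $\|\cdot\|_X$ is precisely the argument the paper uses (see the chain of inequalities around its formula \eqref{eq:1234} at the end of Section~\ref{sec:proof1}, where $H(\omega,y)=\{x:(y+2x)\cdot\omega>0\}$). You also correctly identify the crux: the bound requires not merely the finiteness \eqref{eq:gfin} but the stronger structural fact that the $x$-dependence of $g(x,dy,\omega)$ enters only through a half-space indicator, which is exactly what the paper verifies.

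Two points of divergence are worth flagging. First, the representation is not ``a finite sum of half-space terms'' but rather $g=g_1+h$ where $g_1$ is a single half-space term (formula \eqref{eq:g1def}: $g_1(x,dy,\omega)=\one_{[(y+2x)\cdot\omega>0]}L(y\cdot\omega,\omega)\,\calH^1_{\ell_\omega}(dy)$) and $h$ is a \emph{continuous superposition over an auxiliary $y'\in\R^3$} of such half-space terms (the displayed formula for $h$ in the paper's proof, derived from \eqref{eq:II*}); Minkowski handles the integral just as well as a sum, so this is only a phrasing issue, but it does mean the half-space structure is not literally a pointwise factorization of $g$ as you wrote. Second, and more substantively, your proposal treats the half-space structure as a hypothesis to be granted and acknowledges that verifying it ``is the work''; that verification is in fact the content of Sections~\ref{sec:Born1}--\ref{sec:proof1} of the paper (Corollary~\ref{cor:K1+} for the source of the Heaviside factor, the Wiener-algebra inversion in Proposition~\ref{prop:YWiener}, and the assembly in the proof of Theorem~\ref{thm:struct}). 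So while your reduction of \eqref{eq:HW} is sound and identical to the paper's, you have not proved the first half of the theorem (existence and finiteness of $g$) nor the half-space form of $g$ on which your argument rests, so what you have is the correct final step with the main body of the proof still open.
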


The structure function $g(x, y, \omega)$ only depends on $x$ through the $1$-dimensional coordinate $x_\omega:=x \cdot \omega$, that is $g(x, y, \omega) \equiv \tilde g(x_\omega, y, \omega)$, and it has the additional regularity
\be\lb{reg}
\|\partial_{x_\omega} \tilde g(x_\omega, y, \omega)\|_{L^1_\omega \mc M_y \mc M_{x_\omega}} < \infty.
\ee
In particular, $W_\pm$ and $W_\pm^*$ are bounded on $X=L^p$, $1 \leq p \leq \infty$: if $f \in L^2 \cap L^p$, then
\be\lb{1.9}
\|W_{\pm} f\|_{L^p} + \|W_{\pm}^* f\|_{L^p} \les \|f\|_{L^p}.
\ee
This improves on Yajima's results since (i) less is required of~$V$, and (ii),   the class of Banach spaces~$X$ in~\eqref{eq:HW} is considerably more general than Lebesgue or Sobolev spaces.

 It is of course desirable to have a quantitative estimates on $g$ in place
of mere finiteness in~\eqref{eq:gfin}.  This is a somewhat delicate matter, since the so-called limiting absorption principle for the perturbed resolvent
are typically noneffective (see however~\cite{RT}). Clearly, any bound on $g$ will 
require a quantitative version of the zero energy condition.  In the following theorem we obtain a bound in terms of a global (in energy) bound 
on the Birman-Schwinger operator, see~\eqref{eq:Res Bd Intro}. The somewhat unusual $L^{\infty}\to L^{\infty}$ limiting-absorption principle is natural in this context.

\begin{theorem}\lb{thm:structQ}
Let $V\in B^{1+2\gamma}$, $0<\gamma$. 
Under the hypotheses of the previous theorem we have the following quantitative  bound on the structure function $g$, 
\EQ{\label{eq:gQ}
\int_{\Sph^2 }  \|  g(x,dy,\omega)\|_{\mes_{y} L^\infty_x}  \, d\omega & \le C_{0} (1+\|V\|_{B^{1+2\gamma}})^{38+\frac{105}{\gamma}} (1+M_{0})^{4+\frac{3}{\gamma}}
}
where 
\EQ{
\label{eq:Res Bd Intro}
\sup_{\eta\in\R^{3}} \sup_{\eps>0}\big \|   \big(I + R_0(|\eta|^2 \pm i\eps ) V  \big)^{-1} \big\|_{\infty \to \infty} =: M_{0}<\infty
} 
and $C_{0}$ is some absolute constant.   The right-hand side of~\eqref{eq:gQ} also controls $C(V)$ in~\eqref{eq:HW} as well as~\eqref{reg}.

Let $\tilde V \in B^{1+2\gamma}$ satisfy 
\EQ{\lb{eq:VVt}
\|  V-\tilde V\|_{B^{1+2\gamma}} < c_{0} \min(M_{0}^{-1},\|  V\|_{B^{1+2\gamma}} ),
} 
where $c_{0}\ll 1$ is some absolute constant.  Then $\tilde V$ obeys the $0$-energy condition, and there is the following {\em stability bound} on the structure functions $g, \tilde g$: 
\begin{align}
\label{eq:gtildeg}
& \int_{\Sph^2 }  \|  g(x,dy,\omega)-\tilde g(x,dy,\omega)\|_{\mes_{y} L^\infty_x}  \, d\omega \\
& \le C_{1}(\gamma, \|  V\|_{B^{1+2\gamma}} , M_{0}) \big[ \| V-\tilde V\|_{B^{1+2\gamma}}  + \| (V-\tilde V)/(|V|+|\tilde V|)\one_{[|V|+|\tilde V|\ne0]} \|_{\infty}\big] \nn
\end{align}
where $C_{1}$ is as on the right-hand side of~\eqref{eq:gQ}, albeit with different numbers in the exponents.  
\end{theorem}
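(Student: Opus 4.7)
The plan is to revisit the derivation of the structure formula in Theorem~\ref{thm:struct} and track quantitative constants at every step. The starting point is the representation of $W_+$ as a Born-type series whose terms are generated by repeated insertions of $V$ and the free resolvent $R_0(\lambda^2+i0)$, together with a single occurrence of the perturbed resolvent $(I+R_0(\lambda^2+i0)V)^{-1}$ whose $L^\infty\to L^\infty$ operator norm is exactly $M_0$ by hypothesis~\eqref{eq:Res Bd Intro}. The kernel entering $g(x,dy,\omega)$ is built from the spectral representation $e^{i\lambda\,x\cdot\omega}$ composed with these factors, and I would bound its $L^1_\omega\mc M_y L^\infty_x$ norm by replacing each $R_0 V$ factor by its Wiener/Agmon-type bound on $B^{1+2\gamma}$ (this is the point where the $\gamma>0$ enters: one needs slightly more decay than $1$ to close the $L^1$-integrability in $\omega$, giving rise to the $\gamma^{-1}$-type exponents), and by inserting the single $L^\infty\to L^\infty$ bound of size $M_0$ at the place where the perturbed inverse sits.

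The exponents $38+105/\gamma$ and $4+3/\gamma$ in~\eqref{eq:gQ} arise from counting how many times each object appears. First I would isolate a finite collection of elementary building blocks: $V$ in $B^{1+2\gamma}$, the free resolvent kernel against weights $\langle x\rangle^{-1-2\gamma}$, the Plancherel/trace expressions that convert spectral integrals into integrals over $\Sph^2$, and the fractional powers $|V|^{1/2}$ used in the Birman--Schwinger factorization. Each contributes a definite number of powers of $\|V\|_{B^{1+2\gamma}}$ and a polynomial number of $1/\gamma$ factors from geometric sums $\sum_j 2^{-j\gamma}$. The main technical obstacle is bookkeeping: one must carefully propagate the dependence on $\|V\|_{B^{1+2\gamma}}$ and $M_0$ through the multiplicative structure of the series and through all Hölder-type applications in the mixed norm $L^1_\omega\mc M_y L^\infty_x$. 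The extra regularity claim~\eqref{reg} for $\partial_{x_\omega}\tilde g$ is obtained in parallel, since $\partial_{x_\omega}$ acts on the factor $e^{i\lambda x\cdot\omega}$ and costs exactly one power of $\lambda$, which is compensated by an extra $R_0$ factor.

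For the stability bound~\eqref{eq:gtildeg}, the main tool is the second resolvent identity
\EQ{
(I+R_0 V)^{-1} - (I+R_0\tilde V)^{-1} = (I+R_0 V)^{-1}\, R_0(\tilde V - V)\,(I+R_0\tilde V)^{-1}.
}
Condition~\eqref{eq:VVt} together with a Neumann series argument around $I+R_0 V$ first shows that the analogous constant $\tilde M_0$ for $\tilde V$ is finite and comparable to $M_0$, so that $\tilde V$ satisfies the zero-energy regularity hypothesis as well. Subtracting the two representations term by term and applying the bounds from the first part to every factor except the one containing $V-\tilde V$ yields the first summand $\|V-\tilde V\|_{B^{1+2\gamma}}$ in~\eqref{eq:gtildeg}.

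The second summand $\|(V-\tilde V)/(|V|+|\tilde V|)\one_{[|V|+|\tilde V|\ne 0]}\|_\infty$ is the novel point and arises exclusively from the Birman--Schwinger factorization $V=uv$ with $u=\sgn(V)|V|^{1/2}$, $v=|V|^{1/2}$, used inside the derivation of $g$. The difference $u_V-u_{\tilde V}$ (and similarly $v_V-v_{\tilde V}$) decomposes as a term controlled by $|\,|V|^{1/2}-|\tilde V|^{1/2}|\les (|V|+|\tilde V|)^{1/2}\cdot\|(V-\tilde V)/(|V|+|\tilde V|)\|_\infty^{1/2}$ plus a term reflecting the possible sign change, both of which are naturally bounded by the supremum appearing in~\eqref{eq:gtildeg}. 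The hardest part of the whole argument is to organize the expansion so that each difference $V-\tilde V$ is isolated exactly once and paired with a compatible mixed-norm bound, so that one obtains the \emph{linear} dependence in~\eqref{eq:gtildeg} rather than a merely Lipschitz-on-a-small-ball estimate with implicit constants.
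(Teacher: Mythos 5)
Your proposal goes in a fundamentally different direction from the paper's proof and, at the critical step, in a direction the paper explicitly rules out. You propose to track constants through a ``Born-type series whose terms are generated by repeated insertions of $V$ and the free resolvent,'' bounding each factor and ``counting how many times each object appears.'' But the paper states (end of Section~\ref{sec:WOP}) that the Born series~\eqref{Dyson} may diverge for large $V$, and that it ``avoids summing~\eqref{Dyson} altogether and relies instead on the Wiener algebra approach.'' The quantitative bound~\eqref{eq:gQ} is obtained not by term-by-term estimation but by applying the quantitative Wiener theorem Corollary~\ref{cor:YWiener} in the Banach algebra $Y$ of Definition~\ref{def:VB}, which inverts $I+T_{1+}^\eps$ by a local-to-global patching argument. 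The exponents $38+\tfrac{105}{\gamma}$ and $4+\tfrac{3}{\gamma}$ come from the bound~\eqref{eq:Sinvnorm}, $\|(I+S)^{-1}\|_Y\les \eps_0^{-3}(\eps_1^{-3}+L_0^3 M_1^3\|S\|_Y^3)M_1$, through the explicit choices of the truncation radius $L_0$ (from localization in $y$), the frequency-cutoff scales $\eps_0,\eps_1$ (from~\eqref{eq:Bed1}--\eqref{eq:Bed3} and the H\"older continuity~\eqref{eq:Hoelder}), and $M_1=1+KM_0$ (from the pointwise inversion~\eqref{eq:Inv2*}); see Lemma~\ref{lem:T1condQ}. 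Your heuristic of ``geometric sums $\sum_j 2^{-j\gamma}$'' is not where the $1/\gamma$-exponents originate; they arise because the parameter $L_0\sim (KM_1)^{1/\gamma}$ dictates how many local charts one needs, and this feeds polynomially into the global inverse. Without the Wiener machinery there is no way to close the argument for large $V$.

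For the stability bound your appeal to the Birman--Schwinger factorization $V=uv$ with $u=\sgn(V)|V|^{1/2}$ is not used anywhere in the paper and does not reflect the actual source of the $L^\infty$ term. The paper's algebra $Y$ is built over the weighted space $V^{-1}B=\{f: Vf\in B^\sigma\}$. When comparing $g$ (built from $Y$ over $V^{-1}B$) and $\tilde g$ (over $\tilde V^{-1}B$), these live a priori in different algebras; the fix is to work throughout in the common algebra built from $U:=|V|+|\tilde V|$. After writing
\EQ{
\tilde T_{+}^{\eps}-T_{+}^{\eps} = (I-T_{+}^{\eps})\oast(T_{1+}^{\eps}-\tilde T_{1+}^{\eps})\oast(I-\tilde T_{+}^{\eps}),
}
one obtains the first summand. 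The $L^\infty$ term then arises precisely in the piece~\eqref{eq:g shift}--\eqref{eq:weh}: one must bound a quantity of the form $\|(V-\tilde V)(x)\,\frak X_{+}^{\eps}(x,y)\|_{B^{\sigma}_{x}}$, and since $\frak X_{+}^{\eps}$ is controlled only in $U^{-1}B_x$ one has to factor $(V-\tilde V)=\big[(V-\tilde V)/U\big]\cdot U$ and extract the supremum $\|(V-\tilde V)/U\,\one_{[U\ne0]}\|_\infty$. Square-root factorizations of $V$ play no role. So the second half of your plan rests on a non-existent structure in the proof and, as written, would not produce the stated $L^\infty$ term.
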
 

The appearance of the $L^{\infty}$-norm on the right-hand side of~\eqref{eq:gtildeg} is an unfortunate technical issue. 
The finiteness of $M_0$ will be shown in Section~\ref{sec:spectral}. It requires the $0$-energy assumption we impose on $H=-\Delta+V$. It would be interesting to have an {\em effective bound} on~$M_0$
in terms of the quantity 
\[
M_{00}:= \|   \big(I + (-\Delta)^{-1} V  \big)^{-1} \big\|_{\infty \to \infty}  <\infty
\]
which is precisely what our $0$-energy assumption amounts to, and quantifiable properties of~$V$, cf.~\cite{RT}. But we do not pursue these matters here, and therefore bound $g$ in terms of $M_0$, rather than $M_{00}$. 
Theorem~\ref{thm:structQ} is only interesting for large potentials. 
We remark that for $V$ small in $B^{1+}$ the $0$-energy condition automatically holds, $M_0\simeq 1$, and 
\EQ{\label{eq:gQklein}
\int_{\Sph^2 }  \|  g(x,dy,\omega)\|_{\mes_{y} L^\infty_x}  \, d\omega & \le C_{0} \|V\|_{B^{1+}}
}
This will be a byproduct of our results.


\section{Function spaces and interpolation}
\label{sec:Lpq}

For more background on the material in this section cf.~\cite[Appendices]{bec} and the comprehensive treatment in~\cite{bergh}. 
We recall  the family of Lorentz spaces $L^{p,q}(\R^d)$ defined as 
\EQ{
\|f\|_{L^{p,q}(\R^d)} &= \Big( \int_0^\infty \big(t^{\frac1p} f^*(t))^q\,\frac{dt}{t}\Big)^{\frac1q} \qquad 1\le q<\infty\\
\|f\|_{L^{p,\infty}(\R^d)} &=  \sup_{t\ge0} \; t^{\frac1p} f^*(t) \qquad   q=\infty
}
where $f^*$ is the nonincreasing rearrangement of $f$. 
The duality relations are $(L^{p,q})'=L^{p',q'}$ if $1<p<\infty$, $1\le q<\infty$.  Under real interpolation one has for $1\le p_0\ne p_1\le\infty$ 
\EQ{
\big(   L^{p_0,q_0}, L^{p_1,q_1}\big)_{\theta, r} = L^{p,r}
}
where $0<\theta<1$, $1\le q\le\infty$, and $\frac1p=\frac{1-\theta}{p_0} + \frac{\theta}{p_1}$.  In particular, 
\[
\big(   L^{p_0}, L^{p_1}\big)_{\theta, r} = L^{p,r}
\]
The Marcinkiewicz interpolation theorem in this setting states the following, see \cite[Theorem 5.3.2]{bergh}: 

\begin{theorem}\label{thm:Mar}
$T: L^{p_0,r_0} \to L^{q_0,s_0}$ and $T: L^{p_1,r_1} \to L^{q_1,s_1}$
with $p_0\ne p_1$ and $q_0\ne q_1$ implies that $T:L^{p,r} \to L^{q,s}$ if $\infty\ge s\ge r>0$ and $\frac1p=\frac{\theta}{p_0} + \frac{1-\theta}{p_1}$, 
$\frac1q=\frac{\theta}{q_0} + \frac{1-\theta}{q_1}$, $0<\theta<1$. 
\end{theorem}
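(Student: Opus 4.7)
The plan is to derive Theorem \ref{thm:Mar} directly from the real interpolation identification of Lorentz spaces recorded just above in the excerpt, combined with the abstract interpolation theorem for bounded operators and the lexicographic nesting of Lorentz spaces in the second index. In this way the Marcinkiewicz statement is reduced to a purely formal exercise in the K-method, which is appropriate given that the Lorentz identification is being quoted from Bergh--L\"ofstr\"om rather than derived.

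First I would apply the identification $(L^{p_0,r_0},L^{p_1,r_1})_{\theta,r}=L^{p,r}$ on the domain side and $(L^{q_0,s_0},L^{q_1,s_1})_{\theta,r}=L^{q,r}$ on the target side, both with the same second parameter $r$ (possibly after replacing $\theta$ by $1-\theta$ to match the convention of the earlier identification). This requires $p_0\ne p_1$ and $q_0\ne q_1$, which are precisely the non-triviality hypotheses of the theorem; the relations for $1/p$ and $1/q$ then match those asserted. Second, by the fundamental interpolation property of the real method, any linear (or quasilinear, via the usual decomposition $f=f_0+f_1$ adapted to the K-functional) map $T$ that is bounded $A_i\to B_i$ for $i=0,1$ is automatically bounded
\[
T:(A_0,A_1)_{\theta,r}\longrightarrow (B_0,B_1)_{\theta,r}
\]
with norm controlled by $\|T\|_{A_0\to B_0}^{1-\theta}\|T\|_{A_1\to B_1}^{\theta}$. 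Applied here with $A_i=L^{p_i,r_i}$, $B_i=L^{q_i,s_i}$, this produces the intermediate bound $T:L^{p,r}\to L^{q,r}$.

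Third, I would invoke the standard embedding $L^{q,r}\hookrightarrow L^{q,s}$ whenever $0<r\le s\le\infty$ (which follows from the monotonicity of $t\mapsto t^{1/q}f^{*}(t)$-type quasinorms in the second index, or equivalently from Hardy's inequality applied to the nonincreasing rearrangement). Composing this embedding with the interpolated bound yields $T:L^{p,r}\to L^{q,s}$, which is exactly the conclusion of the theorem, and the hypothesis $s\ge r$ enters in precisely this final step.

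The only place where any real care is needed is the admissibility of values $0<r<1$ (or $0<s<1$) in the real interpolation functor and in the Lorentz nesting, since classical references often restrict to $r\ge 1$. This is handled in Bergh--L\"ofstr\"om by working with the K-functional formulation, for which the quasinorm and the interpolation inequalities extend without change to $0<r\le\infty$; the identification of $(L^{p_0,r_0},L^{p_1,r_1})_{\theta,r}$ with $L^{p,r}$ also persists in that range. So the only genuine obstacle is purely bookkeeping: tracking the index conventions carefully enough that the two displayed identifications, the bounded-operator interpolation, and the final embedding chain together without a mismatch of $\theta$ versus $1-\theta$.
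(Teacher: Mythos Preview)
The paper does not supply a proof of this theorem; it is quoted verbatim as \cite[Theorem 5.3.2]{bergh} and used as a black box. Your sketch is correct and is in fact precisely the argument Bergh--L\"ofstr\"om give: apply the abstract interpolation inequality for the $K$-method to get $T:(A_0,A_1)_{\theta,r}\to(B_0,B_1)_{\theta,r}$, identify both sides with Lorentz spaces via the quoted formula (which needs $p_0\ne p_1$ and $q_0\ne q_1$), and then embed $L^{q,r}\hookrightarrow L^{q,s}$ using $r\le s$.
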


One has H\"{o}lder's  inequalities, see \cite{oneil}, 
\EQ{\label{eq:Ho1}
\| fg\|_{L^{r,s}} &\le r' \|f\|_{L^{p_1,q_1}} \|g\|_{L^{p_2,q_2}}  \text{\ \ provided}\\
 \frac{1}{p_1}+\frac{1}{p_2} &= \frac1r<1,\; \frac{1}{q_1}+\frac{1}{q_2}\ge\frac1s
 }
 and the endpoint
 \EQ{\label{eq:Ho2}
\| fg\|_{L^{1}} &\le \|f\|_{L^{p,q_1}} \|g\|_{L^{p',q_2}}, \quad \frac{1}{q_1}+\frac{1}{q_2}\ge 1
}
as well as Young's inequalities:
\EQ{\label{eq:Y1}
\| f\ast g\|_{L^{r,s}} &\le 3r \|f\|_{L^{p_1,q_1}} \|g\|_{L^{p_2,q_2}} \text{\ \ provided} \\
\frac{1}{p_1}+\frac{1}{p_2} &= \frac1r+1>1,\;\frac{1}{q_1}+\frac{1}{q_2}\ge\frac1s
}
and the endpoint
\EQ{\label{eq:Y2}
\| f\ast g\|_{L^{\infty}} &\le \|f\|_{L^{p,q_1}} \|g\|_{L^{p',q_2}}, \quad \frac{1}{q_1}+\frac{1}{q_2}\ge 1
}
For any Banach space $X$ one defines the vector-valued space $\dot\ell^{s}_q(X)$ and $ \ell^{s}_q(X)$ as
\EQ{\label{eq:ellsq}
\| \{ f_j\}_j \|_{\dot\ell^{s}_q(X)} &= \Big( \sum_{j\in\Z} \Big( 2^{js} \|f_j\|_X\Big)^q \Big)^{\frac1q} \\
\| \{ f_j\}_j \|_{\ell^{s}_q(X)} &= \Big( \sum_{j\ge0} \Big( 2^{js} \|f_j\|_X\Big)^q \Big)^{\frac1q} 
}
where $s\in\R$ and $1\le q<\infty$. The usual modification needs to made for $q=\infty$. 

Then, see \cite[Theorem 5.6.1]{bergh}, 
\EQ{\label{eq:vec int}
\Big( \ell^{s_0}_{q_0}(X), \ell^{s_1}_{q_1}(X)\Big)_{\theta,q} = \ell^s_q(X)
}
where $s_0\ne s_1$, $0<q_0,q_1\le\infty$, $0<\theta<1$, $s=s_0(1-\theta)+s_1\theta$, $0<q\le \infty$, $\frac{1}{q}=\frac{1-\theta}{q_{0}}+\frac{\theta}{q_{1}}$, and the same holds for the dotted spaces. 

\begin{definition}\label{def:Bspace}
Let $A_k:=\{ x\in \R^d\:|\: 2^k\le |x|\le 2^{k+1} \}$ for each $k\in \Z$. 
For any $\alpha\in \R$ we set
\EQ{
\dot B^\alpha &:= \Big\{ v\in L^2_{\text{loc}} (\R^d\setminus\{0\})\:|\: \sum_{k\in\Z} 2^{\alpha k} \|  \one_{A_k} v\|_2 <\infty\Big\}\\
B^\alpha &:= \Big\{ v\in L^2_{\text{loc}} (\R^d )\:|\: \| \one_{B(0,1)} v\|_2+\sum_{k\ge0 } 2^{\alpha k} \|  \one_{A_k} v\|_2 <\infty\Big\}
}
where the sums on the right-hand side are the respective norms. 
\end{definition} 

In the notation of \eqref{eq:ellsq} one has 
\EQ{
\|v\|_{\dot B^\alpha} &= \Big\| \{ \one_{A_k} v\}_{k\in\Z} \Big\|_{\dot\ell^{\alpha}_1(L^2(\R^d))}=:\| \iota(v)\|_{\dot\ell^{\alpha}_1(L^2(\R^d))}  \\
 \|v\|_{B^\alpha} &= \Big\| \{ \one_{A_k} v\}_{k\ge0}\cup\{\one_{B(0,1)}v\} \Big\|_{\ell^{\alpha}_1(L^2(\R^d))}=:\| \iota_0(v)\|_{\ell^{\alpha}_1(L^2(\R^d))}
}
This allows us to use  interpolation as in \eqref{eq:vec int}. 

\begin{lemma}
\label{lem:Balpha interpol}
The spaces in Definition \ref{def:Bspace} satisfy the following properties:
\begin{itemize}
\item  $\dot B^{2\alpha} =(L^2, |x|^{-2} L^2)_{\alpha,1}$ and $B^{2\alpha} =(L^2, \langle x\rangle^{-2} L^2)_{\alpha,1}$   for $0<\alpha<1$. 
\item 
$\dot B^{\alpha} = (\dot B^{\alpha_0},   \dot B^{\alpha_1})_{\theta,1}$ where $\alpha=\theta\alpha_1+(1-\theta)\alpha_0$ where $0<\theta<1$ and $\alpha_0\ne\alpha_1$. 
The same holds for the undotted spaces. 
\item 
The embeddings $\dot B^{\frac12} \embed  L^{\frac32,1}(\R^3)$ and $\dot B^{1} \embed  L^{\frac65,1}(\R^3)$ hold, and the same is true for the undotted spaces. 
By duality, $L^{3,\infty}(\R^3)\embed (B^{\frac12} )'$.
\end{itemize}
\end{lemma}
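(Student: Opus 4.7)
The plan is to reduce all three assertions to the sequence-space identifications $\dot B^\alpha\cong\dot\ell^\alpha_1(L^2)$ and $B^\alpha\cong\ell^\alpha_1(L^2)$ already recorded just above the lemma via the isometric maps $\iota,\iota_0$, together with the vector-valued interpolation formula \eqref{eq:vec int} and the Lorentz H\"{o}lder inequality \eqref{eq:Ho1}. The one genuinely delicate point is the form of Bergh--L\"{o}fstr\"{o}m's Theorem 5.6.1 used for the first bullet.

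For the first bullet, the map $\iota$ is an isometry of $L^2$ onto $\dot\ell^0_2(L^2)$, and since $|x|\sim 2^k$ on $A_k$ one has $\||x|^2v\|_2^2\sim\sum_k 2^{4k}\|\one_{A_k}v\|_2^2$, so $\iota$ is also an isomorphism of $|x|^{-2}L^2$ onto $\dot\ell^2_2(L^2)$. Real interpolation commutes with such isomorphisms of pairs, so it remains to identify
\[
(\dot\ell^0_2(L^2),\dot\ell^2_2(L^2))_{\alpha,1}\;=\;\dot\ell^{2\alpha}_1(L^2)\;=\;\dot B^{2\alpha},
\]
which is Theorem 5.6.1 of Bergh--L\"{o}fstr\"{o}m in the form that permits the fine index of the output to be chosen freely once the weights are genuinely distinct ($s_0\neq s_1$); this is the real content that \eqref{eq:vec int} encodes. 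The undotted statement follows by the same argument upon replacing $|x|$ by $\langle x\rangle$, using $\langle x\rangle\sim 1$ on $B(0,1)$ and $\langle x\rangle\sim 2^k$ on $A_k$ for $k\ge 0$. The second bullet (reiteration) reduces by the same transfer through $\iota$ to $(\dot\ell^{\alpha_0}_1(L^2),\dot\ell^{\alpha_1}_1(L^2))_{\theta,1}=\dot\ell^{\alpha}_1(L^2)$, which is again a direct application of \eqref{eq:vec int}; the undotted version is identical.

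For the third bullet, I would decompose $v=\sum_{k\in\Z}v_k$ with $v_k=\one_{A_k}v$ and apply \eqref{eq:Ho1} to the factorisation $v_k=\one_{A_k}\cdot v_k$ with $\one_{A_k}\in L^{6,2}$ and $v_k\in L^2=L^{2,2}$, yielding
\[
\|v_k\|_{L^{3/2,1}}\lesssim\|\one_{A_k}\|_{L^{6,2}}\|v_k\|_{L^2}\lesssim|A_k|^{1/6}\|v_k\|_{L^2}\sim 2^{k/2}\|v_k\|_{L^2},
\]
where $\|\one_E\|_{L^{p,q}}\sim|E|^{1/p}$ is immediate from the rearrangement definition of the Lorentz norm. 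Summing in $k$ and using the triangle inequality in the Banach space $L^{3/2,1}$ gives $\|v\|_{L^{3/2,1}}\lesssim\sum_k 2^{k/2}\|v_k\|_2=\|v\|_{\dot B^{1/2}}$. The undotted version follows by treating the ball $B(0,1)$ by the same H\"{o}lder bound applied to $\one_{B(0,1)}\in L^{6,2}$. The embedding $\dot B^{1}\hookrightarrow L^{6/5,1}$ is proved verbatim with $\one_{A_k}\in L^{3,2}$ (so that $\|\one_{A_k}\|_{L^{3,2}}\sim 2^k$) in place of $L^{6,2}$. Finally, the dual embedding $L^{3,\infty}\hookrightarrow(B^{1/2})'$ is immediate by adjunction from $B^{1/2}\hookrightarrow L^{3/2,1}$ together with the duality relation $(L^{3/2,1})'=L^{3,\infty}$ recorded in this section.
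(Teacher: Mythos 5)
Your handling of the first two bullets --- the transfer through $\iota,\iota_0$ into the sequence spaces $\dot\ell^{\,\cdot}_{1}(L^2)$ followed by the vector-valued interpolation theorem, with the correct observation that the output fine index is free once $s_0\neq s_1$ --- is exactly the paper's (implicit) argument. For the third bullet you take a genuinely different and more elementary route. The paper proves the endpoint embeddings $\dot B^{0}\hookrightarrow L^2$ and $\dot B^{3/2}\hookrightarrow L^1$ and then interpolates, $\dot B^{3\theta/2}=(\dot B^{0},\dot B^{3/2})_{\theta,1}\hookrightarrow (L^2,L^1)_{\theta,1}=L^{\frac{2}{1+\theta},1}$, taking $\theta=\frac13$ and $\theta=\frac23$; this uses the second bullet as reiteration on the $\dot B$ side. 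You bypass interpolation altogether: applying the Lorentz H\"{o}lder inequality \eqref{eq:Ho1} to $\one_{A_k}v=\one_{A_k}\cdot(\one_{A_k}v)$ with $\one_{A_k}\in L^{6,2}$ (resp.\ $L^{3,2}$), noting $\|\one_{A_k}\|_{L^{6,2}}\sim 2^{k/2}$ (resp.\ $\|\one_{A_k}\|_{L^{3,2}}\sim 2^{k}$), and summing by the triangle inequality in $L^{3/2,1}$ (resp.\ $L^{6/5,1}$) gives the same bound. Your version is self-contained and does not depend on the second bullet, at the cost of producing only the two exponents actually needed rather than the whole scale $\dot B^{3\theta/2}\hookrightarrow L^{\frac{2}{1+\theta},1}$; the paper's interpolation packs the scale into a single line. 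Both are correct. One small imprecision worth fixing: $\iota$ is an isometric embedding of $L^2$ into $\dot\ell^{0}_2(L^2)$, not an isomorphism onto it (and similarly for the weighted space); the reduction to \eqref{eq:vec int} still goes through because $\dot B^\alpha$ is a retract of $\dot\ell^{\alpha}_1(L^2)$ under $\{f_k\}\mapsto\{\one_{A_k}f_k\}$, and real interpolation commutes with complemented subspaces.
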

\begin{proof}
One has $\|v\|_2= \| \iota(v)\|_{\dot\ell^{0}_2(L^2(\R^d))}$ and 
\[
\| v\|_{ |x|^{-2} L^2} \simeq \| \iota(v)\|_{\dot\ell^{2}_2(L^2(\R^d))},\quad \| v\|_{ \langle x\rangle^{-2} L^2} \simeq \| \iota_0(v)\|_{\ell^{2}_2(L^2(\R^d))}
\]
Next, $\dot B^0\embed L^2$, $B^0\embed L^2$ and $\dot B^{\frac32} \embed L^1$, $B^{\frac32} \embed L^1$ (by H\"older). Therefore, 
\EQ{\label{eq:BdotL}
 \dot B^{\frac32\theta} = (\dot B^0, \dot B^{\frac32})_{\theta,1} \embed (L^2, L^1)_{\theta,1} = L^{p,1}
}
where $p=\frac{2}{1+\theta}$, $0<\theta<1$. Similarly, $B^{\frac32\theta}\embed L^{p,1}$. 
\end{proof}


\section{Spectral theory} 
\label{sec:spectral}

This section discusses zero energy eigenvalues and resonances, as well as embedded resonances.  These are classical questions, but due to the somewhat special class of potentials under consideration we supply the details.  See also~\cite{bec}. 

\begin{definition}\lb{def:0res}
Let $V\in L^{\frac32,1}(\R^{3})$. We say that $0$ energy is regular for $H=-\Delta+V$ if 
\be\lb{1.10}
f = - R_0(0) V f
\ee
has no solution $f \in L^{\infty}$, $f\ne0$. 
\end{definition}

It is standard that $H=-\Delta+V$ is a self-adjoint operator on $L^{2}(\R^{3})$ if $V\in L^{3/2, 1}$ is real-valued; for example, 
such potentials obey the Rollnick condition, cf.~\cite[Proposition 2.9]{bec}.  
The following lemma relates the $0$ energy criterion of Definition~\ref{def:0res} with the more common definition involving weighted $L^2$-spaces, see~\cite{jenkat}, \cite{ionschlag}.  For this we will assume more than $V \in L^{3/2, 1}$, namely $V\in B^{\frac12}$ which is a smaller space by Lemma~\ref{lem:Balpha interpol}. 

\begin{lemma}\lb{lem2.1} 
Let  $V \in B^{\frac12}$. If (\ref{1.10}) holds with $f \in L^{\infty}$, then   
$f \in  (B^{\frac12})'$, and conversely. 
Note that $f\in (B^{\frac12})'$ if and only if 
$$
\sup_{k \ge 0} \; 2^{-k/2} \|\one_{A_k} f\|_{L^2} + \| \one_{B(0,1)}f\|_2 < \infty.
$$
\end{lemma}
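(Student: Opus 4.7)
The plan is to use the resolvent identity $f=-R_{0}(0)Vf$ together with the Lorentz-space mapping properties of $R_{0}(0)$ collected in Section~\ref{sec:Lpq}, exploiting the dyadic annular structure of $V\in B^{1/2}$. The forward direction $f\in L^{\infty}\Rightarrow f\in(B^{1/2})'$ is the easier half: the pointwise estimate $\|Vf\|_{B^{1/2}}\le \|f\|_{\infty}\|V\|_{B^{1/2}}$, the embedding $B^{1/2}\embed L^{3/2,1}$ from Lemma~\ref{lem:Balpha interpol}, and the Lorentz form of Hardy--Littlewood--Sobolev (which gives $R_{0}(0)=I_{2}:L^{3/2,1}\to L^{3,1}$ in $\R^{3}$), together with $L^{3,\infty}\embed(B^{1/2})'$ (also from Lemma~\ref{lem:Balpha interpol}), chain together to place $f=-R_{0}(0)(Vf)$ in $L^{3,1}\subset L^{3,\infty}\subset(B^{1/2})'$.

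For the converse $f\in(B^{1/2})'\Rightarrow f\in L^{\infty}$, I will bootstrap in three applications of the resolvent identity, decomposing $V=\sum_{k}V_{k}$ with $V_{k}=\one_{A_{k}}V\in L^{2}$ (and an analogous piece on $B(0,1)$), and similarly $f_{k}=\one_{A_{k}}f$. The first pass is the dyadic Cauchy--Schwarz
\[
\|Vf\|_{1}\le\sum_{k}\|V_{k}\|_{2}\|f_{k}\|_{2}\le\Big(\sum_{k}2^{k/2}\|V_{k}\|_{2}\Big)\sup_{k}2^{-k/2}\|f_{k}\|_{2}\le\|V\|_{B^{1/2}}\|f\|_{(B^{1/2})'},
\]
which, combined with the weak-type HLS endpoint $I_{2}:L^{1}\to L^{3,\infty}$, places $f$ in $L^{3,\infty}$. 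The second pass uses only the decomposition of $V$: Lorentz H\"older~\eqref{eq:Ho1} with the identities $1/(6/5)=1/2+1/3$ and $1/2+0\ge 1/2$ gives $\|V_{k}f\|_{L^{6/5,2}}\le C\|V_{k}\|_{2}\|f\|_{L^{3,\infty}}$, and summation via $\sum_{k}\|V_{k}\|_{2}\le\|V\|_{B^{1/2}}$ (using $2^{-k/2}\le 1$ for $k\ge 0$) together with HLS in Lorentz form $I_{2}:L^{6/5,2}\to L^{6,2}$ promotes $f$ to $L^{6,2}$. The third pass repeats the pattern with $1/(3/2)=1/2+1/6$ and $1/2+1/2\ge 1$ to give $\|Vf\|_{L^{3/2,1}}\le C\|V\|_{B^{1/2}}\|f\|_{L^{6,2}}$, and Young's endpoint~\eqref{eq:Y2} then yields $\|R_{0}(0)(Vf)\|_{\infty}\le C\|Vf\|_{L^{3/2,1}}\|\,|x|^{-1}\|_{L^{3,\infty}}<\infty$, i.e.\ $f\in L^{\infty}$.

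The hard part is precisely the scaling barrier crossed in the converse: one pass of $R_{0}(0)$ takes $(B^{1/2})'$ only as far as $L^{3,\infty}$, which is scale-consistent with but strictly larger than $L^{\infty}$. The hypothesis $V\in B^{1/2}$ (strictly stronger than $V\in L^{3/2,1}$) is used decisively through the annular $L^{2}$ summability $\sum_{k}\|V_{k}\|_{2}<\infty$, which lets the second Lorentz index of $V_{k}f$ be sharpened across successive passes, ultimately landing $Vf$ in the critical space $L^{3/2,1}$ on which Young's endpoint delivers $L^{\infty}$.
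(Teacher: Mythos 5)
Your converse direction ($f \in (B^{1/2})' \Rightarrow f \in L^\infty$) is correct and takes a genuinely different route from the paper: you bootstrap $f$ through $L^{3,\infty} \to L^{6,2} \to L^\infty$ in three passes of the resolvent identity, whereas the paper splits $V = V_1 + V_2$ ($V_1$ bounded of compact support, $\|V_2\|_{L^{3/2,1}}$ small) and inverts $I + R_0(0)V_2$ on $L^\infty$ by a Neumann series. Both work; minor remark: your passes 2 and 3 do not actually need the per-annulus decomposition since they use only $V \in L^2$, which already follows from $B^{1/2} \embed B^0 \embed L^2$.

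Your forward direction ($f\in L^\infty \Rightarrow f \in (B^{1/2})'$), however, contains a genuine error. The claimed mapping $R_0(0)=I_2:L^{3/2,1}(\R^3)\to L^{3,1}(\R^3)$ is wrong on scaling grounds: $I_2$ in $\R^3$ satisfies $1/q=1/p-2/3$, so $p=3/2$ is precisely the endpoint, and the correct conclusion via \eqref{eq:Y2} is $I_2:L^{3/2,1}\to L^\infty$, not $L^{3,1}$ (the map $L^{3/2,1}\to L^{3,1}$ would be $I_1$, not $I_2$). Your chain $Vf\in B^{1/2}\embed L^{3/2,1}\Rightarrow R_0(0)(Vf)\in L^\infty$ therefore merely returns $f\in L^\infty$, which was the hypothesis, and $L^\infty$ does not embed into $(B^{1/2})'$ (take $f\equiv 1$: then $2^{-k/2}\|\one_{A_k}f\|_2 \simeq 2^{k}$, unbounded). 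The map one actually wants is $R_0(0):L^1\to L^{3,\infty}$, but $Vf\in L^1$ does not follow from $V\in B^{1/2}$ and $f\in L^\infty$, since $V$ need not lie in $L^1$ (membership in $L^1$ would require summability of $2^{3k/2}\|\one_{A_k}V\|_2$, not the $2^{k/2}\|\one_{A_k}V\|_2$ controlled by $B^{1/2}$). This is exactly the obstruction that the paper's splitting $V=V_1+V_2$ is designed to remove: $V_1 f\in L^1$ gives $R_0(0)V_1 f\in L^{3,\infty}$, and the small piece $R_0(0)V_2$ is inverted on $L^{3,\infty}$ by a Neumann series, yielding $f=-(I+R_0(0)V_2)^{-1}R_0(0)V_1 f\in L^{3,\infty}\embed (B^{1/2})'$. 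You need this splitting (or some other device to extract decay from the fixed-point equation); a single application of $R_0(0)$ to $Vf\in L^{3/2,1}$ cannot reach $(B^{1/2})'$.
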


\begin{proof} 
The kernel of $R_0(0) = (-\Delta)^{-1}$  is explicitly given by $R_0(0)(x, y) = \frac{1}{4\pi} \frac{1}{|x-y|}$. Note that $|x|^{-1}\in L^{3,\infty}$. 
Write $V = V_1 + V_2$, where $V_1$ is bounded of compact support (and therefore in $L^1$) and $\|V_2\|_{L^{3/2, 1}}$ is small. Then 
\[
\| R_0(0) V_2 \|_{L^{3, \infty}\to L^{3, \infty}} <1
\]
by \eqref{eq:Ho2} and \eqref{eq:Y1}. 
Thus $(I + R_0(0) V_2)^{-1}$ is a bounded operator on $L^{3, \infty}$ and
$$
f = -(I + R_0(0) V_2)^{-1} R_0(0) V_1 f.
$$
Since $V_1 f \in L^1$, $R_0(0) V_1 f \in L^{3, \infty}$ by \eqref{eq:Y1}, so by Lemma~\ref{lem:Balpha interpol}, 
$$f \in L^{3, \infty} \embed  (B^{\frac12})'.$$
 
Now assume $f \in (B^{1/2})'$, then $V f \in L^1$, so $f = R_0(0) V f \in L^{3, \infty}$. With the previous splitting $V = V_1 + V_2$,  write
\EQ{\label{eq:V1V2}
f = -(I + R_0(0) V_2)^{-1} R_0(0) V_1 f.
}
Here we used that 
\[
\| R_0(0) V_2 \|_{L^{\infty}\to L^{\infty}} <1
\]
One has $V_1 f \in L^{3/2, 1}$ since $V_1f\in L^2$ with compact support, $R_0(0) V_1 f \in L^{\infty}$, so $f \in L^{\infty}$.
\end{proof}

We write the resolvent identity as
\be\lb{2.29}
(I + R_0(z) V)^{-1} = I - R_V(z) V;\ R_V(z) = (I + R_0(z) V)^{-1} R_0(z)
\ee
for $\Im z>0$. Here $R_{V}(z)=(H-z)^{-1}$. The following lemma addresses the limits $\Im z\to0+$. 
In particular, we  obtain a {\em limiting absorption principle} on $L^{\infty}$, see~\cite{ionschlag,bec} for more background.  By (\ref{2.29}), $I + R_0 V$ is invertible on $L^{\infty}$ if and only if $R_V$ is
 bounded from $L^{3/2, 1}$ to $L^{\infty}$.

\begin{lemma}\lb{lemma2.3} 
Assume that $V \in L^{3/2, 1}$ is real-valued. 
Then $I + R_0(|\eta|^2 + i0)V$ is invertible on $L^{\infty}$ for any $\eta \ne 0$. For $\eta = 0$ it is invertible if and only if  zero energy 
is regular in the sense of Definition~\ref{def:0res}.
In that case, $I + R_0(|\eta|^2 \pm i\eps ) V$ is also invertible on $L^{\infty}$ and its inverse is bounded in $\B(L^{\infty}, L^{\infty})$, uniformly  for $\eps  \geq 0$ and~$\eta$.
\end{lemma}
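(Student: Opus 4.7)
The strategy is to combine a Neumann series for the tail of $V$ with a Fredholm alternative for the near part, then upgrade pointwise invertibility to a uniform operator-norm bound.

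Split $V=V_1+V_2$ with $V_1\in L^\infty$ of compact support and $\|V_2\|_{L^{3/2,1}}<\delta$ as small as needed. Since $|R_0(z)(x,y)|\le(4\pi|x-y|)^{-1}$ whenever $\Im z\ge 0$ and $|x|^{-1}\in L^{3,\infty}(\R^3)$, the Lorentz Young inequality \eqref{eq:Y1} yields
\[
\|R_0(z)V_2\|_{L^\infty\to L^\infty}\les\|V_2\|_{L^{3/2,1}}<1/2,
\]
so $(I+R_0(z)V_2)^{-1}$ exists by geometric series uniformly on $\ov{\set H}:=\{\Im z\ge 0\}$. The factorization $I+R_0(z)V=(I+R_0(z)V_2)(I+K(z))$ with $K(z):=(I+R_0(z)V_2)^{-1}R_0(z)V_1$ reduces invertibility to $I+K(z)$. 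For $f$ in the unit ball of $L^\infty$, $V_1f$ is bounded with compact support, and the explicit free-resolvent kernel shows that $R_0(z)V_1 f$ is continuous, H\"older equicontinuous on bounded sets, and decays uniformly like $1/|x|$ at infinity, so Arzel\`a-Ascoli gives compactness of $K(z)$ on $L^\infty$.

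By the Fredholm alternative, $I+R_0(z)V$ is invertible on $L^\infty$ iff injective. An $L^\infty$ solution of $f=-R_0(|\eta|^2+i0)Vf$ satisfies $(-\Delta+V)f=|\eta|^2 f$ in the outgoing-radiation sense: for $\eta\ne 0$, the Kato/Agmon absence-of-embedded-eigenvalues argument (adapted to our potential class, cf.\ \cite{bec,ionschlag}) forces $f\equiv 0$; for $\eta=0$, injectivity is precisely Definition~\ref{def:0res}. For $\eps>0$, the kernel of $R_0(z)$ decays exponentially because $\Im\sqrt z>0$, so $f=-R_0(z)Vf\in L^2$ whenever $f\in L^\infty$ solves the equation (Young's inequality gives $R_0(z):L^{3/2,1}\to L^2$ bounded in that regime); self-adjointness of $H$ then forces $f=0$ since $z\notin\R$. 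Norm continuity of $z\mapsto R_0(z)V_1$ (dominated convergence in the kernel, using $\supp V_1$ compact) subsequently gives a uniform bound on $(I+K(z))^{-1}$ on every compact subset of $\ov{\set H}$.

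The main technical obstacle is the uniform bound as $|\eta|\to\infty$: $\|K(z)\|$ does not tend to zero because $|R_0(z)(x,y)|$ is $\eta$-independent, so one must exploit oscillatory cancellation in the phase $e^{i\sqrt{z}|x-y|}$. The cleanest route I foresee is to approximate $V_1$ in $L^{3/2,1}$ by smooth compactly supported potentials, integrate by parts in the oscillatory phase to establish $R_0(z)V_1\to 0$ in the strong operator topology on $L^\infty$ as $|\sqrt{z}|\to\infty$, and combine this with the collective compactness of $\{K(z)\}$ to upgrade to uniform invertibility at high energy. Alternatively, a Wiener-algebra resummation of the Born series as in \cite{bec} handles the high-energy regime directly, delivering a uniform bound in a translation-invariant norm dominating $L^\infty\to L^\infty$. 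Pairing this high-energy bound with the compact-set bound above yields the global bound $M_0$ of \eqref{eq:Res Bd Intro}.
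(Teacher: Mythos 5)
Your overall skeleton — split $V=V_1+V_2$, factor out a Neumann series for the small tail, apply Arzel\`a--Ascoli to the localized compact part, invoke the Fredholm alternative, kill embedded eigenvalues by a Kato/Agmon/Ionescu--Jerison argument, and treat zero energy by definition — matches the paper's plan in spirit. (The paper does not peel off $V_2$ before compactness; it proves compactness of $R_0(|\eta|^2+i0)V$ on $L^\infty$ directly, approximating $V$ in $L^{3/2,1}$, and then works the Fredholm alternative on the dual $L^1$ problem. Your $V_1/V_2$ factorization is a legitimate variant that buys nothing essential but loses nothing either.) The low- and mid-energy parts are fine.

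The genuine gap is your high-energy argument. You correctly flag this as the main obstacle, but the first route you sketch — establish $R_0(z)V_1\to 0$ in the strong operator topology and ``combine with collective compactness of $\{K(z)\}$'' — does not go through, because the family $\{K(z)\}_{\Im z\ge 0}$ is \emph{not} collectively compact. The images $R_0(\lambda^2+i0)V_1 f$ for $\|f\|_\infty\le 1$ contain functions oscillating at frequency $\sim\lambda$ (the paper's equicontinuity estimate gives a modulus of continuity $C\langle\eta\rangle|y|$, with an $\eta$ in front that cannot be removed), so the union of images over $\lambda\ge 0$ is not precompact in $C_0$. Anselone's theorem therefore does not apply. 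The paper sidesteps this by proving a \emph{norm} decay for the \emph{square}: Lemma~\ref{lem:Asq} shows $\|(R_0(\lambda^2+i0)V)^2\|_{\infty\to\infty}\to 0$ via a direct oscillatory-integral estimate (nonstationary phase with a cutoff near the degenerate set $\{u\in[x,y]\}$) on the iterated kernel $K_\lambda(x,y)=\int e^{i\lambda(|x-u|+|u-y|)}|x-u|^{-1}|u-y|^{-1}V(u)\,du$, after which $(I+R_0 V)^{-1}=(I-R_0 V)(I-(R_0 V)^2)^{-1}$ is uniformly bounded for large $\lambda$. That squaring trick (which you must do: the paper explicitly notes that $\|R_0(\lambda^2+i0)V\|_{\infty\to\infty}$ is exactly $\lambda$-independent) is the missing idea. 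Your alternative mention of a Wiener-algebra resummation is a real option but is left undeveloped, so as written the proof is incomplete.
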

\begin{proof}
Let $C_0$ be the space of continuous functions that vanish at infinity.  It follows from~\eqref{eq:Y2} 
that  $R_0(|\eta|^2 + i0) V: L^{\infty}\to C_0$.  Indeed, 
\[
\lim_{y\to0} \| (Vf)(\cdot + y) -Vf\|_{L^{3/2, 1}} =0
\]
gives the continuity of $R_0(|\eta|^2 + i0) Vf$ for any $f\in L^\infty(\R^3)$. Second, if $\eps>0$ then there exists $\tilde V\in C_{\mathrm{comp}}$ with $\|V-\tilde V\|_{L^{3/2, 1}}<\eps$. 
Clearly, $$(R_0(|\eta|^2 + i0) \tilde Vf)(x)\to 0$$  as $|x|\to\infty$. By \eqref{eq:Y2}, we obtain the same for $V$ in place of $\tilde V$. 

 By  Arzel\`{a}-Ascoli, a set $A\subset C_0$ is precompact in $C_0$ if and only if one has
\begin{itemize}
\item  equicontinuity:
\be\lb{1st_cond}
\forall \eps >0\ \exists \delta>0\ \forall |y|<\delta\ \forall a \in A\qquad \|a(\cdot -y) - a\|_{\infty} < \eps ;
\ee
\item  uniform decay at infinity:
\be\lb{2nd_cond}
\forall \eps >0\ \exists R\ \forall a \in A\qquad\qquad  \|\one_{|x|>R}(x) a(x)\|_{\infty} < \eps .
\ee
\end{itemize}
We wish to verify these conditions for  $A=R_0(|\eta|^2 + i0) V(\calB)$ where $\calB \subset L^{\infty}$ is bounded; in fact, we may take it to be the unit ball. It suffices to assume 
 that $V$ is continuous and compactly supported. Indeed, approximating $V$ in $ L^{3/2, 1}$ by such potentials and using \eqref{eq:Y2} as above implies the general case. 
 For the uniform vanishing at $\infty$, suppose that $V(x)=0$ if $|x|\ge M$. Then 
 \EQ{
 |R_0(|\eta|^2 + i0) Vf(x)|\les \|Vf\|_{1} (|x|-M)^{-1} \les \|V\|_1 (|x|-M)^{-1}
 }
 and the vanishing follows. For the equicontinuity we introduce for any $\lambda\in\R$ the kernels
 \[
k_{1,\lambda} (x) = \one_{B(0,1)}(x)\frac{e^{i\lambda |x|}}{|x|}  ,\quad k_{2,\lambda} (x) = \one_{B(0,1)^c}(x)\frac{e^{i\lambda |x|}}{|x|} 
 \]
Then for any $|y|\le \frac12$ with absolute constants $C$, 
\EQ{
\| k_{1,\lambda} (\cdot+y)- k_{1,\lambda}(\cdot)\|_{L^1} + \| k_{2,\lambda} (\cdot+y)- k_{2,\lambda}(\cdot)\|_{L^4} &\le C(1+|\lambda|)|y| 
}
 It then follows that  (with $\lambda=|\eta|$ and using $\|f\|_{\infty}\le1$) 
  \EQ{
 & \|(R_0(|\eta|^2 + i0) Vf)(\cdot+y) - (R_0(|\eta|^2 + i0) Vf)(\cdot) \|_\infty\\ &\le C\langle \eta\rangle\, |y| (  \|V\|_\infty + \|V\|_{\frac43})
  }
 Since the right-hand side does not depend on $f$, equicontinuity holds. Thus, $R_0(|\eta|^2 + i0) V:L^\infty\to L^\infty$ is a compact operator, and so is  $VR_0(|\eta|^2 - i0):L^1\to L^1$ (as the former is the adjoint of the latter). 
 
By Fredholm's alternative in Banach spaces, $I + R_0(|\eta|^2 + i0) V$ is invertible in $L^\infty$ if and only if the equation
\EQ{\label{eq:kernel}
f = -V R_0(|\eta|^2 - i0) f
}
has no other than the trivial solution in  $L^1$. Let $f$ solve \eqref{eq:kernel}. Then $$g = R_0(|\eta|^2 - i0) f\in L^{3, \infty}$$ and satisfies the equation
$$
g= -R_0(|\eta|^2 - i0) V g.
$$
By the same argument as in~\eqref{eq:V1V2} it follows that $g\in L^\infty(\R^3)$.  Assume at first that $\eta \ne 0$. Since $V$ is real-valued,   $\langle g, V g\rangle\in \R$ whence $$\Im \langle V g, R_0(|\eta|^2-i0) V g \rangle = 0.$$ Hence $$\widehat {Vg} \mid_{|\eta| \set \Sph^2 } = 0.$$ This is well-defined since $Vg\in L^1$.  By \cite[Proposition 2.4]{golsch} $$g=-R_0(|\eta|^2-i0) Vg \in \langle x \rangle^{1/2-\eps } L^2$$ for some $\eps >0$. Since $g$ is a distributional solution of the equation $$(-\Delta +V - |\eta|^2) g = 0$$ and $g \in L^\infty$, it follows that $$g \in \langle\nabla\rangle^{-2} L^{3/2, 1}_{\mathrm{loc} }\subset H^1_{\mathrm{loc}}.$$ By~\cite[Theorem 2.1]{ionjer}, for $\eta \ne 0$ we conclude from the preceding that $g=0$.  It then follows that $f=(-\Delta-|\eta|^2)g=0$ (distributionally), and so \eqref{eq:kernel} only has the trivial solution.

If $\eta=0$, we refer to Definition~\ref{def:0res} and to Lemma \ref{lem2.1}.  To be specific, here too, $I + R_0(0) V$ is invertible in $L^\infty$ if and only if the equation
\EQ{\label{eq:kernel'}
f = -V R_0(0) f
}
 has no other than the trivial solution in  $L^1$. But by the same argument as before $g=  R_0( 0) f\in L^\infty$ solves $g= -R_0(0) V g$. Definition~\ref{def:0res} then requires that $g=0$ and therefore also $f=0$. 
In summary, the inverse $(I + R_0(|\eta|^2 + i0) V)^{-1}$ exists for every $\eta \in \set R^3$.

The map $\lambda \mapsto R_0(\lambda^2 + i0) V \in \B(L^{\infty}, L^{\infty})$ is continuous, and the inverses have uniformly bounded norms when $\lambda$ is in a compact set.
 By Lemma~\ref{lem:Asq} $$\| (R_0(\lambda^2+i0)V)^2 \|_{\infty\to\infty}\to 0$$ as $\lambda\to\infty$.  Therefore,
 \EQ{\lb{eq:RV2}
 (I+R_0(\lambda^2+i0)V)^{-1}= (1-R_0(\lambda^2+i0)V)(I- (R_0(\lambda^2+i0)V)^2)^{-1} 
 }
 is uniformly bounded as operators on $L^\infty$ for all  $|\lambda|\gg1$.

This extends to any set in the complex plane at a positive distance away from the eigenvalues --- in particular to the whole right half-plane.
\end{proof}

It is easy to see that $\|R_0(\lambda^2+i0)V\|_{\infty\to\infty}$ does not depend on $\lambda$. In fact,
\[
(R_0(\lambda^2+i0)Vf)(0)= \frac{1}{4\pi} \int_{\R^3} \frac{e^{i\lambda|y|}}{|y|} f(y)\, dy = (-\Delta)^{-1} (e^{i\lambda|\cdot|} f)(0)
\]
and therefore
\[
\|R_0(\lambda^2+i0)V\|_{\infty\to\infty} = \|(-\Delta)^{-1}V\|_{\infty\to\infty}
\]
which does not decay in $\lambda$. 
To circumvent this issue, one can square the operator as in \eqref{eq:RV2}.

\begin{lemma}\label{lem:Asq}
For $V\in L^{\frac32,1}(\R^3)$ we have 
\[
\| (R_0(\lambda^2+i0)V)^2 \|_{\infty\to\infty}\to 0 \text{\ \  as\ \ } \lambda\to\infty
\]
\end{lemma}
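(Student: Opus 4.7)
The plan is to combine a density reduction with an explicit computation of the kernel of $R_0(\lambda^2+i0)^2$. First, given $\eps>0$, split $V=V_1+V_2$ with $V_1\in C_c^\infty(\R^3)$ and $\|V_2\|_{L^{3/2,1}}<\eps$; such a decomposition is possible because $C_c^\infty$ is dense in $L^{3/2,1}$. Since $|\cdot|^{-1}\in L^{3,\infty}(\R^3)$, the endpoint Young inequality \eqref{eq:Y2} gives $\|R_0(\lambda^2+i0)V_2\|_{\infty\to\infty}\le C\eps$ uniformly in $\lambda$, exactly as used in the proof of Lemma~\ref{lemma2.3}. Expanding $(R_0(\lambda^2+i0)V)^2$ into four summands, the three that contain at least one factor of $R_0(\lambda^2+i0)V_2$ contribute at most $C(\eps+\|V_1\|_{L^{3/2,1}})\eps$ in operator norm, which is arbitrarily small. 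Hence it suffices to prove the claim for an arbitrary fixed $V_1\in C_c^\infty(\R^3)$.

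The key ingredient is the identity $R_0(\zeta)^2=\partial_\zeta R_0(\zeta)$, which in three dimensions produces the explicit kernel
\EQ{
R_0(\lambda^2+i0)^2(x,z)=\frac{ie^{i\lambda|x-z|}}{8\pi\lambda}
}
obtained by differentiating $\zeta\mapsto e^{i\sqrt{\zeta}|x-z|}/(4\pi|x-z|)$ in $\zeta$ at $\zeta=\lambda^2+i0$. This kernel is bounded uniformly in $(x,z)$ by $(8\pi\lambda)^{-1}$. The kernel of $(R_0(\lambda^2+i0)V_1)^2$, regarded as an integral operator on $L^\infty$, is
\EQ{
K_\lambda(x,z)=V_1(z)\int_{\R^3}R_0(\lambda^2+i0)(x-y)\,V_1(y)\,R_0(\lambda^2+i0)(y-z)\,dy,
}
so the problem reduces to showing that the inner integral is $O(\lambda^{-1})$ uniformly in $x,z$, with a constant depending only on some seminorm of $V_1$.

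To achieve this, decompose $V_1(y)=V_1(y_0)+(V_1(y)-V_1(y_0))$ for a suitably chosen reference point $y_0$. The contribution of the constant $V_1(y_0)$ equals $V_1(y_0)\cdot ie^{i\lambda|x-z|}/(8\pi\lambda)$ by the displayed identity, carrying the full $\lambda^{-1}$ decay. For the smooth remainder, one integrates by parts against the phase $\phi(y)=|x-y|+|y-z|$, whose gradient $\nabla_y\phi(y)=(y-x)/|y-x|+(y-z)/|y-z|$ vanishes precisely on the closed segment $[x,z]$, a one-dimensional degenerate stationary manifold in $\R^3$ for which the standard heuristic yields exactly $\lambda^{-1}$ decay. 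Using smoothness and compact support of $V_1$ to bound boundary and cutoff terms then produces a pointwise estimate of the form $C_{V_1}/\lambda$, with $C_{V_1}$ depending only on $V_1$ through $\|V_1\|_\infty$, $\|\nabla V_1\|_\infty$, and $\diam(\supp V_1)$. Combining this with compact support of the outer factor $V_1(z)$ and the Schur test,
\EQ{
\|(R_0(\lambda^2+i0)V_1)^2\|_{\infty\to\infty}=\sup_x\int|K_\lambda(x,z)|\,dz\le C_{V_1}\|V_1\|_{L^1}/\lambda\longrightarrow 0,
}
which together with the density reduction completes the argument.

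The principal obstacle is the oscillatory integral estimate for the inner integral, because the stationary set of $\phi$ is the segment $[x,z]$ and is therefore degenerate, so nondegenerate stationary phase is not directly applicable. The cleanest route is to subtract off the contribution of a constant, which by the explicit formula for $R_0(\lambda^2+i0)^2$ already carries the full $\lambda^{-1}$ decay, and to absorb the smooth remainder either by integration by parts in regions where $\nabla_y\phi$ is bounded below or by a version of stationary phase tailored to one-dimensional degenerate stationary manifolds. One must also ensure that the implicit constant in the remainder estimate does not grow with $|x-z|$, a point for which it is useful to note that the leading term from the displayed identity is itself uniformly bounded in $|x-z|$.
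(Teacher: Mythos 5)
Your density reduction is correct and matches the paper's, and the identity $R_0(\zeta)^2=\partial_\zeta R_0(\zeta)$ with the resulting uniformly bounded kernel $ie^{i\lambda|x-z|}/(8\pi\lambda)$ is a nice (and correct) observation that the paper does not use. However, there is a genuine gap in the oscillatory-integral step, and the constant-subtraction device you introduce does not close it. After writing $V_1(y)=V_1(y_0)+(V_1(y)-V_1(y_0))$, the piece coming from the constant is indeed $V_1(y_0)R_0(\lambda^2+i0)^2(x,z)=O(\lambda^{-1})$ by the displayed identity. But this does nothing to the remainder: the integral $\int R_0(x-y)\,(V_1(y)-V_1(y_0))\,R_0(y-z)\,dy$ has exactly the same degenerate phase $\phi(y)=|x-y|+|y-z|$, whose stationary set is the whole segment $[x,z]$, and $V_1(y)-V_1(y_0)$ does not vanish along that entire segment for any single choice of~$y_0$. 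So the term you still need to estimate is a degenerate oscillatory integral of the very same structure as the one you started with; the subtraction is circular. The asserted $O(\lambda^{-1})$ bound for it (via "a version of stationary phase tailored to one-dimensional degenerate stationary manifolds," with attention to the $|x-y|^{-1}|y-z|^{-1}$ singularities and to uniformity in $|x-z|$) is precisely what has to be proved and is not — you yourself flag it as "the principal obstacle."

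The paper sidesteps this entirely by proving only the $o(1)$ statement the lemma asserts, not a rate. It inserts two cutoffs into the inner integral before integrating by parts: one at scale $\delta$ isolating the singular points $y=x,\ y=z$ of the amplitude, which contributes $O(\delta)$; and one at scale $\eps$ excising a tube around the degenerate segment $[x,z]$, which contributes $o(1)$ as $\eps\to0$ because the excised set has small measure. On what remains, $\vec v(x,z,y)=\tfrac{y-x}{|y-x|}+\tfrac{y-z}{|y-z|}$ is bounded below, and a single integration by parts with the operator $|\vec v|^{-2}(i\lambda)^{-1}\vec v\cdot\nabla_y$ gives $o(1)$ as $\lambda\to\infty$ with $\delta,\eps$ fixed. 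Taking $\lambda\to\infty$, then $\eps\to0$, then $\delta\to0$ yields the claim without any degenerate stationary-phase lemma and without a quantitative rate, which is not needed here. If you do want the quantitative $O(\lambda^{-1})$ bound, you would need to carry out the degenerate stationary-phase analysis in full (for example by foliating by the prolate ellipsoids $\phi=\mathrm{const}$ and controlling both the singular amplitude and the degenerating Hessian as $|x-z|\to\infty$); as written, that estimate is claimed but not established.
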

\begin{proof}
In view of \eqref{eq:Y2} we may reduce ourselves to the case of a smooth, compactly supported $V$. Then 
\EQ{
(R_0(\lambda^2+i0)V)^2 f(x) =   \int_{\R^3} K_\lambda(x,y) V(y) f(y)\, dy
}
where 
\EQ{
K_\lambda(x,y):=  \frac{1}{16\pi^2} \int_{\R^3} \frac{e^{i\lambda(|x-u|+|u-y|)}}{|x-u|\, |u-y|} V(u) \,du 
}
We claim that 
\EQ{\lb{eq:clLft}
\sup_{x,y\in\R^6} | K_\lambda (x,y) | \to 0 \text{\ \  as\ \ } \lambda\to\infty
}
If so, then $\|f\|_\infty \le 1$ implies that 
\[
\| (R_0(\lambda^2+i0)V)^2 f\|_\infty \le  \|K_\lambda (x,y)\|_{L^\infty_{x,y}} \|V\|_1  \to 0 \text{\ \  as\ \ } \lambda\to\infty
\]
Given $\delta>0$ we let $\chi$ be a smooth radial bump function in $\R^3$ with $\chi=0$ on the unit ball and $\chi(u)=1$ if $|u|\ge2$.  Then 
\EQ{\lb{eq:Kbreak}
&K_\lambda(x,y)\\&=  \frac{1}{16\pi^2} \int_{\R^3} \frac{e^{i\lambda(|x-u|+|u-y|)}}{|x-u|\, |u-y|} \chi(|u-x|/\delta)\chi(|u-y|/\delta) V(u) \,du  \\
& + \frac{1}{16\pi^2} \int_{\R^3} \frac{e^{i\lambda(|x-u|+|u-y|)}}{|x-u|\, |u-y|} \big[ 1- \chi(|u-x|/\delta)\chi(|u-y|/\delta)\big] V(u) \,du
}
The second line contributes at most $O(\delta)$ to $\|K_\lambda (x,y)\|_{L^\infty_{x,y}}$ in  \eqref{eq:clLft}.
Fix some small $\delta>0$.  We integrate by parts  in $u$ in the first line of~\eqref{eq:Kbreak} using that 
\[
(i\lambda)^{-1} \vec v(x,y,u)\cdot \nabla_u e^{i\lambda(|x-u|+|u-y|)} = |\vec v(x,y,u)|^2\, e^{i\lambda(|x-u|+|u-y|)} 
\]
with 
\[
\vec v(x,y,u) = \frac{u-x}{|u-x|} +  \frac{u-y}{|u-y|}
\]
The degenerate case where $\vec v(x,y,u)=0$ occurs if $u$ lies on the line segment joining the points $x$ and $y$. 
This however contributes nothing to the integral in $u$. Similarly, of $\vec v$ is small, then that will contribute very little to the integral.  Thus, introduce a cut-off function $\chi(\vec v(x,y,u)/\eps)$  into the first integral in \eqref{eq:Kbreak}, which we denote by $\tilde K_\lambda(x,y)$ :
\EQ{\lb{eq:Kbreak2}
&\tilde K_\lambda(x,y)\\&=  \frac{1}{16\pi^2} \int_{\R^3} \frac{e^{i\lambda(|x-u|+|u-y|)}}{|x-u|\, |u-y|} \chi(|u-x|/\delta)\chi(|u-y|/\delta) \chi(\vec v(x,y,u)/\eps) V(u) \,du  \\
& + \frac{1}{16\pi^2} \int_{\R^3} \frac{e^{i\lambda(|x-u|+|u-y|)}}{|x-u|\, |u-y|} (1-\chi(\vec v(x,y,u)/\eps))  \chi(|u-x|/\delta)\chi(|u-y|/\delta) V(u) \,du
}
The second line here contributes $o(1)$ to $\|\tilde K_\lambda(x,y)\|_{L^\infty_{x,y}}$ as $\eps\to0$, whereas in the first line we integrate by parts (with $\eps>0$ small but fixed) using the operator  
\[
\calL := |\vec v(x,y,u)|^{-2} (i\lambda)^{-1} \vec v(x,y,u)\cdot \nabla_u
\]
Sending $\lambda\to\infty$ then shows that this contributes $o(1)$ to $\|\tilde K_\lambda(x,y)\|_{L^\infty_{x,y}}$.  Note that the separation $\delta>0$ avoids the degeneracies arising here from $x,y$ coming too close to $u$. 
\end{proof} 

To summarize, we have shown that if $V \in L^{3/2, 1}$ is real-valued, and $0$ energy is regular as specified in Definition~\ref{def:0res}, then 
\EQ{
\label{eq:Res Bd}
\sup_{\eta\in\R^{3}} \sup_{\eps>0}\big \|   \big(I + R_0(|\eta|^2 \pm i\eps ) V  \big)^{-1} \big\|_{\infty \to \infty} =: M_{0}<\infty
}


\section{Existence and properties of wave operators}
\lb{sec:WOP}

Let $V$ be as in Definition~\ref{def:0res}, and real-valued. 
The orthogonal projection $P_p:L^{2}\to L^{2}$ onto the point spectrum is a finite-rank operator of the form
\be
P_p = \sum_{\ell=1}^N \langle \cdot, f_{\ell} \rangle f_{\ell}.
\ee
where $f_{\ell}$ are an orthonormal family of eigenfunctions of $H=-\Delta+V$ with eigenvalues $H f_\ell = \lambda_\ell f_\ell$.
Since $V\in L^{\frac32}$ obeys the Rollnick condition, cf.~\cite[(2.63)]{bec}, the Birman-Schwinger operator is Hilbert-Schmidt and  $N<\infty$. We are also assuming that there are no zero energy eigenfunctions (or a resonance).  The projection  $P_c=I-P_{p}$ is the orthogonal projection on the subspace corresponding to the continuous spectrum. Lemma~\ref{lemma2.3}  implies that the continuous spectrum $[0,\infty)$ of $H$ is purely absolutely continuous,\
see \cite[Theorem XIII.19]{reesim}. Thus the entire $L^{2}$ spectrum of $H$ consists of finitely many negative  eigenvalues (counted with multiplicity)  and the absolutely continuous spectrum $[0,\infty)$.  For ``nice'' potentials, Agmon's estimate shows that the eigenfunctions $f_{\ell}$ decay exponentially in the point-wise sense. We have no need for this strong property, and the following lemma will suffice. 

\begin{lemma}
\label{lem:felldecay}
Let $V\in L^{\frac32,1}(\R^{3})$ and suppose $f\in L^{2}$ solves $Hf = -E^{2}f$ with $E>0$  in the sense of tempered distributions. Then
$f\in (L^{1}\cap L^{\infty})(\R^{3})$. 
\end{lemma}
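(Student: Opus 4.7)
The plan is to exploit the integral representation $f = -R_0(-E^2)(Vf) = -G_E * (Vf)$ that follows from the distributional eigenvalue equation $(-\Delta + E^2)f = -Vf$. Here $G_E(x) = \frac{e^{-E|x|}}{4\pi|x|}$ is the Yukawa kernel, which lies in $L^r(\R^3)$ for all $r \in [1,3)$, in $L^{3,\infty}$ at the endpoint, and decays exponentially at infinity. The strategy is to bootstrap $f$ from $L^2$ first to $L^\infty$ and then to $L^1$ using this integral equation.

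The $L^\infty$ step is the harder one, since direct H\"older with $V\in L^{3/2,1}$ and $f\in L^2$ hits an endpoint. I would therefore split $V = V_1 + V_2$ with $V_1 \in L^\infty$ compactly supported and $\|V_2\|_{L^{3/2,1}}<\eps$ for $\eps$ small (by density of $C_c$ in $L^{3/2,1}$). Rewriting the integral equation as
\[
(I + R_0(-E^2) V_2)\,f \;=\; -R_0(-E^2)(V_1 f) \;=:\; g,
\]
the right-hand side $g$ lies in $L^2\cap L^\infty$: $V_1 f \in L^1\cap L^2$ (bounded compactly supported times $L^2$), Young with $G_E\in L^1$ gives $g\in L^2$, and pointwise Cauchy-Schwarz with $G_E\in L^2$ gives $g\in L^\infty$. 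On the operator side, the Young endpoint \eqref{eq:Y2} in Lorentz spaces gives
\[
\|R_0(-E^2) V_2 h\|_\infty \;\le\; \|G_E\|_{L^{3,\infty}}\,\|V_2\|_{L^{3/2,1}}\,\|h\|_\infty,
\]
so $I + R_0(-E^2) V_2$ is invertible on $L^\infty$ by Neumann series. Simultaneously, the Hilbert--Schmidt estimate
\[
\|V_2^{1/2} R_0(-E^2) V_2^{1/2}\|_{\mathrm{HS}} \;\le\; C\|V_2\|_{L^{3/2}},
\]
from Hardy--Littlewood--Sobolev on the kernel $\sim|x-y|^{-2}$, implies via the Birman--Schwinger principle that $I + R_0(-E^2)V_2$ is also invertible on $L^2$. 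Since the $L^2$-solution of the equation is unique and the $L^\infty$-Neumann series produces an $L^\infty$-solution of the same equation, the two must coincide, so $f\in L^2\cap L^\infty$.

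The $L^1$ step is then immediate: $f\in L^2\cap L^\infty$ yields $f\in L^3$ by interpolation, so $Vf\in L^1$ by H\"older (using $V\in L^{3/2,1}\subset L^{3/2}$ and $f\in L^3$), and Young's inequality $L^1*L^1\to L^1$ applied to $f = -G_E*(Vf)$ with $G_E\in L^1$ gives $f\in L^1$. Combining, $f\in L^1\cap L^\infty$.

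The main obstacle is reconciling the $L^2$ and $L^\infty$ inverses of $I+R_0(-E^2)V_2$, handled via the Birman--Schwinger argument for the $L^2$-invertibility; the rest of the argument is a routine bootstrap through the integral equation, relying only on the explicit integrability of the Yukawa kernel and the Lorentz-space structure of $V$.
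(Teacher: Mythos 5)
Your proof follows essentially the same line as the paper's: decompose $V$ into a bounded compactly supported piece plus a piece that is small in $L^{3/2,1}$, rewrite the eigenvalue equation so that the small piece sits in a Neumann-invertible operator, and bootstrap $L^2\to L^\infty\to L^1$. The $L^1$ step matches the paper exactly. You are right to flag the one subtle point the paper glosses over, namely why the $L^2$ eigenfunction coincides with the $L^\infty$ output of the Neumann series, but the Birman--Schwinger fix you offer does not close that gap.

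The difficulty is that $R_0(-E^2)V_2$ is not a bounded operator on $L^2$: with $V_2\in L^{3/2,1}$ and $h\in L^2$, the Lorentz--H\"older exponents give $\frac{1}{3/2}+\frac12=\frac76>1$, so $V_2 h$ only lives in a sub-$L^1$ quasi-Banach space and $R_0(-E^2)$ cannot be applied to land back in $L^2$. The Hilbert--Schmidt estimate on $V_2^{1/2}R_0(-E^2)V_2^{1/2}$ is fine, but ``invertibility of $I+R_0(-E^2)V_2$ on $L^2$'' does not parse, and even if it did, separate invertibility in $L^2$ and in $L^\infty$ does not force the two solutions to agree; one needs injectivity of $I+R_0(-E^2)V_2$ on the \emph{sum} space $L^2+L^\infty$, which requires a common bound that is not available here. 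The actual repair is to use more than $f\in L^2$: since $f\in\mathrm{Dom}(H)\subset H^1\hookrightarrow L^6$, one can run the bootstrap with $L^6$ in place of $L^2$. There $R_0(-E^2)V_2$ is genuinely a contraction (by the Lorentz arithmetic $V_2\colon L^6\to L^{6/5,1}$, $R_0(-E^2)\colon L^{6/5,1}\to L^{6,1}$, with norm $\lesssim \|V_2\|_{L^{3/2,1}}$) and also on $L^\infty$, hence on $L^6+L^\infty$, and the unique solution in that sum space is both the given $L^6$ eigenfunction and the $L^\infty$ Neumann series. This observation, rather than the Birman--Schwinger route, is what makes the bootstrap rigorous.
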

\begin{proof}
Since $R:= (-\Delta+E^{2})^{-1}$ takes the Schwartz space to itself, it follows that 
$
f=-RVf
$ in the sense of distributions. Splitting $V=V_{1}+V_{2}$ as before with $\|V_{1}\|_{L^{\frac32,1}}\ll 1$ and $V_{2}$ continuous with bounded support we also have 
\EQ{\label{eq:RV1}
(I+RV_{1})f = -RV_{2}f
}
Now $RV_{1}:L^{\infty}\to L^{\infty}$ with small norm and $V_{2}f\in L^{1}\cap L^{2}\subset L^{\frac32,1}$.
Hence $RV_{2}f\in L^{\infty}$ and $f\in L^{\infty}$, by inverting the operator on the left-hand side of~\eqref{eq:RV1}. 

So $f\in L^{2}\cap L^{\infty}\subset L^{3,\infty}$ and $Vf\in L^{1}(\R^{3})$. The convolution $RVf \in L^{1}$ by Young's inequality, and finally $f\in L^{1}$ as desired. 
\end{proof}

Next, we discuss the existence of the wave operators by the standard Cook's method. However, the class of potentials we consider requires more sophisticated estimates to make Cook's method work, namely the Keel-Tao endpoint~\cite{keetao}. 
Lemma~\ref{lem:cook} was shown in \cite{bec} to also hold when $V$ is in $L^{\frac32, \infty}_0$ (the closure of $L^{\frac32}$ in $L^{\frac32, \infty}$). 

\begin{lemma}
\label{lem:cook} 
 Let  $H=-\Delta+V$ be self-adjoint as in Definition~\ref{def:0res}.  Let $P_c$ be the projection on the continuous spectrum of $H$. Then 
 \EQ{
 W_+ = \slim_{t \to \infty} e^{itH} P_c e^{-itH_0} = \slim_{t \to \infty} e^{itH} e^{-itH_0}
 } 
 exists and  is an isometry on $L^2$. One has  $P_{c}=P_{\mathrm{a.c.}}= W_{+} W_{+}^{*}$. 
 Moreover, for any $f\in L^{2}(\R^{3})$ the integrals 
\be\begin{aligned}\label{eq:1.6}
W_+ f &= f + i \int_0^{\infty} e^{itH} V e^{-itH_0} f \dd t \\
&= P_c f + i \int_0^{\infty} e^{itH} P_c V e^{-itH_0} f \dd t.
\end{aligned}\ee
converge in the strong sense. 
There exist similar formulae for $W_-$ and $W_{\pm}^*$; in particular,
\be\lb{eqn1.7}
W_-^* f = f + i \int_{-\infty}^0 e^{itH_0} V e^{-itH} f \dd t.
\ee
\end{lemma}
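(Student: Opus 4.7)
The plan is to establish existence via Cook's method, the essential input being the Keel--Tao endpoint Strichartz estimate combined with the Lorentz-space H\"{o}lder bounds of Section~\ref{sec:Lpq}. Differentiating $t \mapsto e^{itH}e^{-itH_0}f$ yields
\EQ{
e^{it_1 H}e^{-it_1 H_0}f - e^{it_2 H}e^{-it_2 H_0}f = i\int_{t_2}^{t_1} e^{isH}Ve^{-isH_0}f\dd s,
}
so existence of the strong $L^2$ limit $W_+f$ reduces to Cauchy convergence of the right-hand side. I would dualize against $g\in L^2$, estimate the bracket $\langle Ve^{-isH_0}f, e^{-isH}g\rangle$ by the endpoint H\"older pairing~\eqref{eq:Ho2} in the dual Lorentz spaces $L^{6/5,2}_x$ and $L^{6,2}_x$, apply the H\"older bound $\|Vh\|_{L^{6/5,2}}\le C\|V\|_{L^{3/2,1}}\|h\|_{L^{6,2}}$ coming from~\eqref{eq:Ho1}, and invoke Cauchy--Schwarz in $s$ to reach
\EQ{
\Big|\int_{t_2}^{t_1}\langle Ve^{-isH_0}f, e^{-isH}g\rangle\dd s\Big| \lesssim \|V\|_{L^{3/2,1}} \|e^{-isH_0}f\|_{L^2_s L^{6,2}_x}\|e^{-isH}g\|_{L^2_s L^{6,2}_x}.
}
The first factor is controlled by $\|f\|_{L^2}$ via the Keel--Tao endpoint for the free flow; the second, restricted to $g\in P_c L^2$, by $\|g\|_{L^2}$ via the analogous endpoint Strichartz for $e^{-isH}$, which I would take from~\cite{bec,becgol}, where it is established via a Wiener-algebra resolvent expansion whose essential quantitative input is the bound~\eqref{eq:Res Bd} proved in Section~\ref{sec:spectral}. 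Since the corresponding Strichartz norms on $[t_2,\infty)$ vanish as $t_2\to\infty$, the Cauchy property follows. The $P_p g$ contribution decouples separately: by Lemma~\ref{lem:felldecay} every eigenfunction lies in $L^1\cap L^\infty$, so $\langle e^{-itH_0}f,f_\ell\rangle \to 0$ by $L^\infty$-dispersion, which yields $\slim e^{itH}P_p e^{-itH_0}=0$ and reconciles the two displayed expressions for $W_+$.

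Once existence is in hand, isometry is immediate from $\|e^{itH}e^{-itH_0}f\|_{L^2}=\|f\|_{L^2}$ passing to the strong limit, and I would deduce the intertwining identity $HW_+ = W_+ H_0$ by applying the Cook argument to $e^{i\tau H}W_+$ and differentiating in $\tau$. Lemma~\ref{lemma2.3} gives $P_c = P_{\mathrm{a.c.}}$. The range of $W_+$ is closed, $H$-invariant, and contained in $P_cL^2$; to obtain the remaining inclusion, i.e.\ asymptotic completeness $W_+W_+^* = P_c$, I would construct the inverse wave operators $\widetilde W_+ = \slim_{t\to\infty} e^{itH_0}e^{-itH}P_c$ via the same bilinear estimate with the roles of $H$ and $H_0$ interchanged in the dual pairing, and verify $W_+\widetilde W_+ = P_c$ and $\widetilde W_+ W_+ = I$.

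The representations~\eqref{eq:1.6} and~\eqref{eqn1.7} are by-products: the partial integrals $\int_0^T e^{isH}Ve^{-isH_0}f\dd s$ are Cauchy in $L^2$ by the bilinear bound, so $W_+f = f + i\int_0^\infty e^{isH}Ve^{-isH_0}f\dd s$ with strong $L^2$ convergence, and the second line of~\eqref{eq:1.6} follows from the splitting $f=P_cf+P_pf$ together with $P_pe^{-itH_0}f\to 0$ strongly. The minus-time statements and the adjoint formulae are obtained by symmetric arguments. The main obstacle is precisely the endpoint Strichartz estimate for the perturbed evolution $e^{-isH}$: because $V\in L^{3/2,1}$ carries no smallness, this estimate cannot be produced perturbatively from the free one and must be imported from the Wiener-algebra/resolvent machinery of~\cite{bec,becgol}, which in turn relies on the zero-energy regularity through~\eqref{eq:Res Bd}.
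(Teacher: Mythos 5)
Your proposal is correct and follows essentially the same route as the paper: Cook's method driven by the Keel--Tao endpoint Strichartz estimate (for both $e^{itH_0}$ and the perturbed evolution $e^{itH}P_c$, the latter imported from~\cite{bec,becgol}), the Lorentz-space H\"older bound $V: L^{6,2}\to L^{6/5,2}$, and dispersive decay paired against $L^1$ eigenfunctions (Lemma~\ref{lem:felldecay}) to dispose of the point-spectral part. The paper applies the dual Strichartz estimate directly to the integral rather than dualizing against $g\in L^2$, and is terser about asymptotic completeness (it simply invokes $W_+W_+^*=P_{\Ran(W_+)}$ for isometries), but these are presentational differences, not different arguments.
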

\begin{proof}
The Strichartz estimates 
\EQ{\label{eq:keeltao}
\|e^{it H_0} f\|_{L^2_t L^{6, 2}_x} &\les \|f\|_{L^2}\\
\Big\|\int_\R e^{-isH_0} F(s) \dd s\Big\|_{L^2_x} &\les\|F\|_{L^2_t L^{6/5, 2}_x}, 
}
are the standard Keel--Tao endpoint~\cite{keetao} for the Schr\"{o}dinger evolution of $H_{0}=-\Delta$. They also hold for $e^{itH} P_c$, see \cite{bec}. 

Taking the time derivative of the left-hand side and integrating we obtain
\EQ{\lb{eq:cookt}
e^{itH} P_c e^{-itH_0} f = P_c f + i \int_0^t e^{isH} P_c V e^{-isH_0} f \dd s.
}
Note that by H\"{o}lder's inequality \eqref{eq:Ho1} one has $V:L^{6, 2} \to L^{6/5, 2}$ as a multiplication operator; in fact, this only requires $V\in L^{3,\infty}(\R^{3})$. 
Hence, by \eqref{eq:keeltao}  the integral in \eqref{eq:cookt} converges in norm and  we can send $t \to \infty$ and obtain the statement in (\ref{eq:1.6}) involving~$P_{c}$. Thus, endpoint Strichartz estimates imply the existence of the strong limit $\slim_{t \to \infty} e^{-itH} P_c e^{itH_0}$ in $L^2$.

We claim that 
\be\lb{28}
\lim_{t \to \infty} P_p \; e^{-itH_0} f = 0
\ee
for all $f \in L^2$. Indeed, since $L^{1}\cap L^{2}$ is dense in $L^{2}$ we may assume that $e^{itH_0} f$ decays like $|t|^{-3/2}$ in $L^{\infty}$.
By Lemma~\ref{lem:felldecay}  the pairing with $f_{\ell}\in L^{1}$ therefore decays as desired. 

Consequently,
$$
W_+ = \slim_{t \to \infty} e^{itH} e^{-itH_0} = \slim_{t \to \infty} e^{itH} P_c e^{-itH_0}.
$$
To obtain the integral representation without the projection $P_{c}$ we note that =
$$
\slim_{t \to \infty} i \int_0^t e^{isH} P_p V e^{-isH_0} \dd s = \slim_{t \to \infty} P_p (e^{itH} e^{-itH_0}-I) = -P_p.
$$
The final limit here is obtained from the time-decay of $e^{-itH_0}f$  in $L^{\infty}$ for $f\in L^{1}\cap L^{2}$ and the fact that $f_{\ell}\in L^{1}$. 
 The  relation $P_{c}=P_{\mathrm{a.c.}}= W_{+} W_{+}^{*}$ follows from the general principle that $W_{+} W_{+}^{*}=P_{\Ran(W_{+})}$ for isometries. 
\end{proof}

Expanding the right-hand side of \eqref{eq:1.6} iteratively by means of the Duhamel formula we obtain the formal expansion
\begin{align}\lb{Dyson}
W_+ f &= f + W_{1+} f + \ldots + W_{n+} f + \ldots,\\
\nonumber W_{1+} f &= i \int_{t>0} e^{-i t \Delta} V e^{i t \Delta} f \dd t,\ \ldots \\
W_{n+} f &= i^n \int_{t>s_1>\ldots>s_{n-1}>0} e^{-i(t-s_1)\Delta} V e^{-i(s_1-s_2) \Delta} V \ldots \\
\nonumber &e^{-i s_{n-1} \Delta} V e^{it\Delta} f \dd t \dd s_1 \ldots \dd s_{n-1}
\end{align}
 for $f\in L^2$. 
The first term is the identity, hence always bounded. Yajima~\cite{yajima0}  proved that each remaining term $W_{n+}$, $n \geq 1$, is bounded as an $L^p$ operator. And the operator norm grows exponentially with $n$: in $\R^3$
\be\lb{1.7}
\|W_{n+} f\|_{L^p} \le C^n \|V\|_{\langle x \rangle^{-1-\eps } L^2}^n \|f\|_{L^p}.
\ee
Thus, for small potentials, i.e., when $\|V\|_{\langle x \rangle^{-1-\eps } L^2} \ll 1$, Weierstra{\ss}'s criterion shows that (\ref{Dyson}) is summable, whence the full wave operators $W_{\pm}$ are    
$L^p$-bounded. In general, however,  the asymptotic expansion (\ref{Dyson}) may diverge.

In order to overcome this difficulty, for large $V$ Yajima \cite{yajima0} estimated a finite number of terms directly by this method. He used a separate argument to show the boundedness of the remainder, for which he had to assume that $V$ decays faster than~$\langle x \rangle^{-5-\eps }$.
In this paper, we avoid summing \eqref{Dyson} altogether and rely instead on the first author's Wiener algebra approach~\cite{bec,becgol}.

\begin{definition}
For $\eps >0$ we introduce the regularized operators
\be\begin{aligned}\lb{eq2.5}
W_{n+}^\eps  f&:= i^n \int_{0\leq t_1 \leq \ldots \leq t_n} e^{i(t_n-t_{n-1})H_0-\eps (t_n-t_{n-1})} V \ldots \\
& e^{i(t_2 - t_1)H_0 - \eps (t_2 - t_1)} V e^{it_1 H_0 - \eps  t_1} V e^{-it_nH_0} f \dd t_1 \ldots \dd t_n,
\end{aligned}\ee
together with
\be\lb{eq2.6}
W_+^\eps  = I + i \int_0^{\infty} e^{it H-\eps  t} V e^{-it H_0} \dd t.
\ee
\end{definition}

These regularizations behave as expected under the limit $\eps\to0$. 

\begin{lemma}\lb{lem:Weps}
 $W_+^\eps  f \to W_+ f$ strongly as $\eps  \to 0$ for each $f \in L^2$. Similarly, $W_{n+}^\eps  f \to W_{n+}f$ for each $n\ge1$. 
\end{lemma}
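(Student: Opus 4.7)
The plan is to prove both convergences by a common Abel-summation argument: each regularized object will be rewritten as an Abel mean $\eps\int_0^\infty e^{-\eps s} A(s)f\,ds$ of a norm-bounded family $\{A(s)f\}_{s \ge 0}$ whose strong $L^2$-limit as $s \to \infty$ is precisely the target ($W_+f$ or $W_{n+}f$). The classical Abel theorem---stating that $\lim_{\eps \to 0^+} \eps\int_0^\infty e^{-\eps s} g(s)\,ds = \lim_{s\to\infty} g(s)$ whenever the right-hand limit exists and $g$ is norm-bounded---then closes both statements.

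For the first convergence, the main computation is the Duhamel differentiation
\[
\frac{d}{dt}\bigl[e^{-\eps t} e^{itH} e^{-itH_0} f\bigr] = -\eps\, e^{-\eps t} e^{itH} e^{-itH_0} f + i\, e^{-\eps t} e^{itH} V e^{-itH_0} f.
\]
Integrating over $t \in [0, \infty)$ and using that $\|e^{-\eps t} e^{itH} e^{-itH_0} f\|_2 = e^{-\eps t}\|f\|_2 \to 0$ by unitarity, I would extract the Abel representation
\[
W_+^\eps f \;=\; \eps \int_0^\infty e^{-\eps s}\, e^{isH} e^{-isH_0} f \, ds,
\]
in which the integral converges absolutely in $L^2$. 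Since $e^{isH} e^{-isH_0} f \to W_+ f$ strongly in $L^2$ by Lemma~\ref{lem:cook}, Abel's theorem delivers $W_+^\eps f \to W_+ f$.

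For the Dyson-term statement, I would first note that the product of damping factors $\prod_{j=1}^n e^{-\eps(t_j - t_{j-1})}$ inside the definition of $W_{n+}^\eps$ telescopes to the single outer factor $e^{-\eps t_n}$, whence $W_{n+}^\eps f = \int_0^\infty e^{-\eps t} F_n(t)\,dt$ for
\[
F_n(t) := i^n \int_{0<t_1<\ldots<t_{n-1}<t}\! e^{i(t - t_{n-1})H_0} V \cdots V e^{it_1 H_0} V e^{-itH_0} f\, dt_1\cdots dt_{n-1}.
\]
Setting $G_n(T) := \int_0^T F_n(t)\,dt$ (understood as a strong $L^2$ integral), Stieltjes integration by parts together with $e^{-\eps T} G_n(T) \to 0$ in $L^2$ yields $W_{n+}^\eps f = \eps \int_0^\infty e^{-\eps t} G_n(t)\,dt$. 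Granting that $G_n(T) \to W_{n+} f$ in $L^2$ as $T \to \infty$ and that $\{G_n(T)\}_{T > 0}$ is uniformly bounded in $L^2$, Abel's theorem again concludes.

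The main obstacle is exactly this uniform $L^2$-convergence of the truncated Dyson iterates $G_n(T)$. For $n = 1$ it is an immediate consequence of the dual endpoint Keel--Tao estimate \eqref{eq:keeltao} combined with the multiplication bound $V : L^{6,2}_x \to L^{6/5,2}_x$ (which follows from $V \in L^{3/2,1}$ via H\"older), exactly as in the proof of Lemma~\ref{lem:cook}. For $n \ge 2$ I would proceed inductively on $n$: expose $G_n(T)$ as the outer Duhamel integral of a shorter, already-controlled Dyson iterate, and apply the dual Strichartz estimate at each nested level; dominated convergence in the resulting $L^2_t L^{6/5,2}_x$ norm then provides both the strong $L^2$-limit and the uniform-in-$T$ bound, after which the Abel-summation step closes the argument as in the first part.
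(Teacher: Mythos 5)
Your argument is correct, and for the $W_+^\eps$ half it takes a genuinely different and cleaner route than the paper. The paper splits $W_+^\eps f$ via $I=P_c+P_p$: on $P_c$ it uses the uniform-in-$\eps$ endpoint Strichartz bounds to control the tails and dominated convergence on $[0,T]$; for $P_p$ it invokes the explicit $t^{-3/2}$ dispersive decay of $\langle f, e^{itH_0}Vf_\ell\rangle$ (needing $Vf_\ell\in L^1$ from Lemma~\ref{lem:felldecay}). You instead observe that integrating $\partial_t\bigl[e^{-\eps t}e^{itH}e^{-itH_0}f\bigr]$ gives the Abel-mean identity $W_+^\eps f=\eps\int_0^\infty e^{-\eps s}e^{isH}e^{-isH_0}f\,ds$, and then reduce the whole issue to the strong convergence of $e^{isH}e^{-isH_0}f$, which is precisely the content of Lemma~\ref{lem:cook}. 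This bypasses the $P_c/P_p$ decomposition and the decay estimate for the point spectrum entirely, and even gives $\|W_+^\eps f\|_2\le\|f\|_2$ for free; it buys simplicity at the cost of hiding all the hard analysis inside the already-proved Lemma~\ref{lem:cook}. For $W_{n+}^\eps$, your telescoping of the damping factors into $e^{-\eps t_n}$ and subsequent integration by parts to the Abel mean $\eps\int_0^\infty e^{-\eps t}G_n(t)\,dt$ is a nice reformulation, and once $G_n(T)\to W_{n+}f$ with a uniform $L^2$ bound is supplied, the conclusion again follows from Abel's theorem. You supply that ingredient only at the sketch level (an inductive retarded Strichartz argument), but this is in the same spirit and at the same level of detail as the paper, which leaves the $W_{n+}^\eps$ case to the reader; if you were to flesh it out, the retardation/simplex constraint at each Duhamel level would need to be handled (e.g.\ by a Christ--Kiselev argument, or by noting that the second estimate in \eqref{eq:keeltao} applies to truncated integrals with constants independent of the truncation).
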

\begin{proof} 
It obviously follows from the  Strichartz estimates \eqref{eq:keeltao} that 
(with $t\ge0$) 
\EQ{ 
\sup_{\eps\ge0 }\|e^{it H_0 -\eps t} f\|_{L^2_t L^{6, 2}_x} &\les \|f\|_{L^2}\\
\Big\|\int_\R e^{isH}P_{c} F(s) \dd s\Big\|_{L^2_x} &\les\|F\|_{L^2_t L^{6/5, 2}_x}, 
}
Hence the tails of the integrals in \eqref{eq2.6} (under the projection $P_{c}$)  are uniformly small in $\eps>0$. 
On any compact interval $[0,T]$ we can pass to the limit $\eps\to0$ under the integral by dominated convergence. 

It remains to verify that for any $f \in L^2$
$$
\lim_{\eps  \downarrow 0} \int_0^{\infty} e^{itH - \eps  t} P_p V e^{-itH_0} f \dd t = \int_0^{\infty} e^{itH} P_p V e^{-itH_0} f \dd t,
$$
Since each side is a bounded operator on $L^{2}$ uniformly in $\eps>0$, it suffices to verify this for  $f\in L^{2}\cap L^{\infty}$. 
Specializing to a single eigenfunction $f_{\ell}$, 
\[
\lim_{\eps  \downarrow 0} \int_0^{\infty} e^{it\lambda_{\ell} - \eps  t} \langle  f , e^{itH_0} V f_{\ell}\rangle \dd t = \int_0^{\infty} e^{it\lambda_{\ell} } \langle  f , e^{itH_0} V f_{\ell}\rangle \dd t
\]
Since $Vf_{\ell}\in L^{1}$, 
\[
| \langle  f , e^{itH_0} V f_{\ell}\rangle |\les t^{-\frac32} \quad \forall\; t\ge 1
\]
whence the tails in these integrals are again uniformly small in $\eps$. On compact time intervals we may pass to the limit $\eps\to0$.  In summary,  $W_+^\eps  f \to W_+ f$ strongly as $\eps  \to 0$. 
The argument for $W_{n+}^\eps  f \to W_{n+}f$ is similar and we leave it to the reader. 
 \end{proof}
 
The operators $W_{n+}$ will be expressed in terms of the following kernels. 
Definition~\ref{def:kernels} is somewhat formal, but the subsequent lemmas will justify the formulas rigorously in the context of the wave operators. 

\begin{definition} \lb{def:kernels}
Let $V$ be a Schwartz function in $\R^3$. 
For $\eps  > 0$, let $T_{1\pm}^\eps (x_0, x_1, y)$ be defined in the sense of  distributions as
\be\lb{2.16}
(\mc F_{x_0}^{-1} \mc F_{x_1, y} T_{1\pm}^\eps )(\xi_0, \xi_1, \eta) := \frac {\widehat V(\xi_1 - \xi_0)}{|\xi_1 + \eta|^2 - |\eta|^2 \pm i\eps }
\ee
and, more generally, for any $n\ge1$
\be\begin{aligned}\lb{2.17}
(\mc F_{x_0}^{-1} \mc F_{x_n, y} T_{n\pm}^\eps )(\xi_0, \xi_n, \eta) &:=  
\int_{\R^{3(n-1)}} \frac{\prod_{\ell=1}^n \widehat V(\xi_{\ell} - \xi_{\ell-1}) \dd \xi_1 \ldots \dd \xi_{n-1}}{\prod_{\ell=1}^n (|\xi_{\ell}+\eta|^2-|\eta|^2 \pm  i \eps )}.
\end{aligned}\ee
Also let $T_\pm^\eps $ be given by the distributional Fourier transform 
\be\begin{aligned}\lb{eqn2.13}
\mc F_{y} T_\pm^\eps (x_0, x_1, \eta) &:= e^{ix_0 \eta} \big(R_V(|\eta|^2 \mp i\eps ) V\big)(x_0, x_1) e^{-ix_1\eta};
\end{aligned}\ee
where we assume that $0$ energy is regular for $H=-\Delta+V$; see Lemma~\ref{lem:0regBd} for a justification. 
\end{definition}

The right-hand sides of \eqref{2.16} and \eqref{2.17} are tempered distributions, whence the kernels $T_{n\pm}^\eps (\xi_0, \xi_n, \eta)$ are tempered distributions on $\R^9$.  In the following section we will find this kernel for $n=1$.   
Two variables are sufficient for representing $W_{n+}^\eps $, but a meaningful algebra structure requires one more variable. This is the reason for the presence of a third variable $x_0$ in (\ref{2.16}) and (\ref{2.17}).
For three-variable kernels $T(x_0, x_1, y)$ the expressions above suggests the following composition law $\oast$, which we define {\em formally}. 

\begin{definition}
\lb{def:comp}  
We formally compose three variable kernels $T(x_0,x_1,y)$ on $\R^9$ as follows: 
\be\begin{aligned}\lb{eq2.21}
(T_1 \oast T_2)(x_0, x_2, y) = \int_{\set R^6} T_1(x_0, x_1, y_1) T_2(x_1, x_2, y-y_1) \dd x_1 \dd y_1.
\end{aligned}\ee
Dually (i.e., on the Fourier side), $\oast$ takes the from
\EQ{\lb{comp}
&(\mc F^{-1}_{x_0} \mc F_{x_2, y} (T_1 \oast T_2))(\xi_0, \xi_2, \eta) \\
& = \int_{\set R^3} (\mc F^{-1}_{x_0} \mc F_{x_1, y} T_1)(\xi_0, \xi_1, \eta) (\mc F^{-1}_{x_1} \mc F_{x_2, y} T_2)(\xi_1, \xi_2, \eta) \dd \xi_1.
}
Thus, $\oast$ consists of convolution in the $y$ variable -- i.e., multiplication in the dual variable $\eta$ -- and composition of operators  relative to the other two. 
In the dual variables $\xi_0$, $\xi_1$, and $\xi_2$  composition of operators is preserved.  Note the order of the variables: $x_0$ is the ``input", $x_2$ the ``output" variable, whereas $y$ is the dual energy variable. 
\end{definition}

\medskip
We will study $\oast$ more systematically in Section~\ref{sec:FA}. For now, 
Lemma~\ref{lem:Tast} serves as an example of how we use $\oast$ to recursively generate all  $W_{n+}^\eps $, $n \geq 1$, starting from $W_{1+}^\eps $.  We use the dual formulation~\eqref{comp}
to define $\oast$ rather than the convolution~\eqref{eq2.21}. 

\begin{lemma}\lb{lem:Tast} 
Let $V$ be a Schwartz potential. 
For any $\eps>0$ we have for any $n,m\ge1$
$$
T_{m+}^\eps  \oast T_{n+}^\eps  = T_{(m+n)+}^\eps
$$
in the sense of \eqref{comp}. 
\end{lemma}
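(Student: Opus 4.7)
The proof is a direct Fourier-side computation; the composition $\oast$ is defined in \eqref{comp} precisely so as to chain together the momentum-space integrals in~\eqref{2.17}. First I would apply the definition of $\oast$ to obtain
\EQ{\nn
& \bigl(\mc F^{-1}_{x_0}\mc F_{x_{m+n},y}(T_{m+}^\eps \oast T_{n+}^\eps)\bigr)(\xi_0,\xi_{m+n},\eta) \\
&\quad = \int_{\R^3} \bigl(\mc F^{-1}_{x_0}\mc F_{x_m,y}T_{m+}^\eps\bigr)(\xi_0,\xi_m,\eta)\,\bigl(\mc F^{-1}_{x_m}\mc F_{x_{m+n},y}T_{n+}^\eps\bigr)(\xi_m,\xi_{m+n},\eta)\dd\xi_m,
}
and then substitute the explicit expression \eqref{2.17} for each factor, relabeling the internal variables of the second factor as $\xi_{m+1},\ldots,\xi_{m+n-1}$ so that the joining variable $\xi_m$ is shared by both chains.

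The two numerators then concatenate to $\prod_{\ell=1}^{m+n}\widehat V(\xi_\ell-\xi_{\ell-1})$ and the two product denominators concatenate to $\prod_{\ell=1}^{m+n}(|\xi_\ell+\eta|^2 - |\eta|^2 + i\eps)$. Applying Fubini to combine the two internal integrations and the joining integration in $\xi_m$ into a single integration over $(\xi_1,\ldots,\xi_{m+n-1})\in\R^{3(m+n-1)}$ yields exactly $\bigl(\mc F^{-1}_{x_0}\mc F_{x_{m+n},y}T_{(m+n)+}^\eps\bigr)(\xi_0,\xi_{m+n},\eta)$ by~\eqref{2.17}. Since two tempered distributions whose (smooth) Fourier transforms agree pointwise are equal, this establishes the identity in the sense of \eqref{comp}.

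The only analytic step is Fubini, which is painless: $V$ Schwartz implies $\widehat V$ is Schwartz, so the numerator $\prod_\ell|\widehat V(\xi_\ell-\xi_{\ell-1})|$ decays rapidly in every integration variable, and $\eps>0$ forces each denominator factor to have modulus at least $\eps$, making the product bounded below by $\eps^{m+n}$. Hence the integrand is absolutely integrable on $\R^{3(m+n-1)}$ for each fixed $(\xi_0,\xi_{m+n},\eta)$. Thus there is no genuine obstacle; the proof is essentially bookkeeping of indices, with the Schwartz/$\eps>0$ hypotheses ensuring that all Fourier-side kernels are smooth rapidly decaying functions, so that the distributional definitions \eqref{2.16}--\eqref{2.17} reduce here to ordinary absolutely convergent integrals.
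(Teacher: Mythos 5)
Your proof is correct and follows essentially the same route as the paper: the paper also argues directly from the Fourier-side formulas, carrying out only the case $m=n=1$ explicitly and noting the identity holds both pointwise and distributionally. You simply make the general-$m,n$ bookkeeping and the Fubini justification explicit (the absolute integrability is clear after the change of variables $\zeta_\ell=\xi_\ell-\xi_{\ell-1}$, which reduces the numerator to a product of integrable functions), which is a perfectly sound elaboration of the paper's one-line argument.
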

\begin{proof}
This follows from \eqref{2.16}. For example, 
\EQ{
\mc F^{-1}_{x_0} \mc F_{x_2, y} T_{2+}^\eps(\xi_0, \xi_2, \eta) &= \int_{\set R^3} \frac {\widehat V(\xi_2-\xi_1)}{|\xi_2 + \eta|^2 - |\eta|^2+i\eps} \cdot \frac {\widehat V(\xi_1-\xi_0)}{|\xi_1 + \eta|^2 - |\eta|^2+i\eps} \dd \xi_1   \\
& = \mc F^{-1}_{x_0} \mc F_{x_2, y} (T_{1+}^\eps\oast T_{1+}^\eps)(\xi_0, \xi_2, \eta)
}
both in the pointwise sense, as well as in the space of distributions. 
\end{proof}

The following lemma exhibits the relation with the resolvent operators. 

\begin{lemma}\lb{lem:FTe}
Let  $V$ be a Schwartz function, and $\eps>0$. Then 
\EQ{\lb{eq:fafb}
\mc F^{-1}_{x_0} \mc F_{x_1, y} T_{1+}^\eps (\xi_0, \xi_1, \eta) = \mc F^{-1}_a \mc F_b (R_0(|\eta|^2 - i\eps ) V)(\xi_0+\eta, \xi_1+\eta)
}
and, for any $n\ge1$, 
\EQ{\lb{eq:fafbn}
\mc F^{-1}_{x_0} \mc F_{x_1, y} T_{n+}^\eps (\xi_0, \xi_1, \eta) = \mc F^{-1}_a \mc F_b \big((R_0(|\eta|^2 - i\eps ) V)^n\big) (\xi_0+\eta, \xi_1+\eta).
}
Furthermore, 
\EQ{\lb{eq:t1'}
\mc F_y T_{1+}^\eps (x_0, x_1, \eta) = e^{ix_0\eta} \big(R_0(|\eta|^2 - i\eps ) V\big)(x_0, x_1) e^{-ix_1\eta}.
}
and
\be\begin{aligned}\lb{2.24}
\mc F^{-1}_{x_0} \mc F_{x_1, y} T_+^\eps (\xi_0, \xi_1, \eta) &= \mc F^{-1}_a \mc F_b (R_V(|\eta|^2 - i\eps ) V)(\xi_0+\eta, \xi_1+\eta).
\end{aligned}\ee
\end{lemma}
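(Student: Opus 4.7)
The plan is to verify \eqref{eq:t1'} by a direct Fourier computation and to derive the remaining identities by further Fourier transforms combined with the iteration principle from Lemma \ref{lem:Tast}. With the convention in Definition \ref{def:comp} that $x_0$ is the input and $x_1$ the output variable, the Schwartz kernel of the operator $R_0(|\eta|^2 - i\eps) V$ is $K_0(x_1 - x_0) V(x_0)$, where $K_0$ is the convolution kernel of $R_0(|\eta|^2 - i\eps)$ with radial Fourier symbol $\frac{1}{|\xi|^2 - |\eta|^2 + i\eps}$.

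First I would compute $\mc F^{-1}_{x_0} \mc F_{x_1}$ of the right-hand side of \eqref{eq:t1'}, namely of $e^{ix_0 \eta} K_0(x_1 - x_0) V(x_0) e^{-i x_1 \eta}$. The plane-wave factors combine with the complex exponentials from the Fourier inversions to replace $(\xi_0, \xi_1)$ by the shifted pair $(\xi_0 + \eta, \xi_1 + \eta)$. Substituting $u = x_1 - x_0$ and separating the integrals yields
\[
\Big(\int K_0(u) e^{-iu(\xi_1 + \eta)} \dd u\Big)\Big(\int V(x_0) e^{ix_0(\xi_0 - \xi_1)} \dd x_0\Big) = \frac{\widehat V(\xi_1 - \xi_0)}{|\xi_1 + \eta|^2 - |\eta|^2 + i\eps},
\]
which matches \eqref{2.16} and therefore proves \eqref{eq:t1'}. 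Identity \eqref{eq:fafb} then follows by taking $\mc F^{-1}_{x_0} \mc F_{x_1}$ of \eqref{eq:t1'}, because conjugation of a kernel $K(a,b)$ by $e^{ia\eta}\cdot e^{-ib\eta}$ translates in the Fourier picture to replacing $\mc F^{-1}_a \mc F_b K(\xi_0, \xi_1)$ by $\mc F^{-1}_a \mc F_b K(\xi_0 + \eta, \xi_1 + \eta)$.

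For \eqref{eq:fafbn} I would induct on $n$: Lemma \ref{lem:Tast} gives $T_{n+}^\eps = T_{(n-1)+}^\eps \oast T_{1+}^\eps$, and the dual composition rule \eqref{comp} turns $\oast$ into operator composition via integration against an intermediate Fourier variable, with the common $\eta$-shift propagating coherently through every factor. Equivalently, a change of variables $\xi_\ell \mapsto \xi_\ell + \eta$ in the integrations in \eqref{2.17} directly recovers the iterated Fourier kernel of $(R_0(|\eta|^2 - i\eps) V)^n$ evaluated at $(\xi_0 + \eta, \xi_n + \eta)$. Finally, \eqref{2.24} is immediate from the definition \eqref{eqn2.13} of $T_+^\eps$, which has precisely the structure of \eqref{eq:t1'} but with the perturbed resolvent $R_V$ in place of $R_0$; the same shift calculation applies verbatim, and the fact that $R_V(|\eta|^2 - i\eps) V$ has a well-defined tempered kernel under the standing zero-energy assumption is exactly the content of the referenced Lemma \ref{lem:0regBd}.

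The main point requiring care is purely bookkeeping -- the input/output convention for integral kernels, the sign choices for Fourier transforms, and the shift $\xi \mapsto \xi + \eta$ -- rather than any analytic subtlety. Since $V$ is Schwartz and $\eps > 0$, $K_0$ is smooth off the diagonal with integrable decay, the products of resolvents produce absolutely convergent iterated integrals in \eqref{2.17}, and each of the Fourier identities above can be made rigorous in the sense of tempered distributions.
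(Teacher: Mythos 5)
Your proposal is correct and follows essentially the same approach as the paper: both verify the first-order identity by the same direct Fourier computation (the paper computes $\mc F^{-1}_a\mc F_b(R_0 V)$ explicitly and matches it to \eqref{2.16} after the shift $\alpha=\xi_0+\eta$, $\beta=\xi_1+\eta$; you phrase the same shift calculation as verifying \eqref{eq:t1'} and then reading off \eqref{eq:fafb}), and both dispose of \eqref{eq:fafbn} via the $\oast$-composition of Lemma~\ref{lem:Tast} and of \eqref{2.24} by noting that Definition~\ref{def:kernels} makes it a restatement of the same conjugation identity with $R_V$ in place of $R_0$. The bookkeeping of the input/output convention for the kernel $(R_0V)(x_0,x_1)=R_0(x_0,x_1)V(x_0)$ that you flag matches \eqref{2.90}, so no gap there.
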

\begin{proof}
One has 
\EQ{\nonumber
\mc F^{-1}_a \mc F_b (R_0(|\eta|^2 - i\eps ) V)(\alpha,\beta) &=  \int_{\R^{6}} e^{i(a\cdot\alpha-b\cdot\beta)} R_0(|\eta|^2 - i\eps )(b-a) V(a)\, dadb\\
&= \int_{\R^{6}} e^{i(a\cdot(\alpha-\beta)-b\cdot\beta)} R_0(|\eta|^2 - i\eps )(b) V(a)\, dadb \\
&= \what{V}(\beta-\alpha) (|\beta|^{2}-|\eta|^{2}+i\eps)^{-1}
}
Plugging $\alpha=\xi_0+\eta$ and $\beta= \xi_1+\eta$  into the right-hand side yields \eqref{2.16}, which gives~\eqref{eq:fafb}.  From this, \eqref{eq:t1'} follows easily.   The representation of $T_{n+}^\eps$ follows using the composition $\oast$ from above. 
The other properties are left to the reader. 
\end{proof}

Lemma~\ref{lemma2.3}  immediately yields the following boundedness. As usual, $\B(X_1, X_2)$ are the bounded operators $X_1\to X_2$ for any Banach spaces $X_1$ and $X_2$. 

\begin{lemma}\lb{lem:0regBd}
Assuming that $V\in L^{\frac32,1}$ one has  $(\mc F_y T_{1+}^\eps )(\eta) \in \B(L^{\infty}_{x_0}, L^{\infty}_{x_1})$ and $(\mc F_y T_{1+}^\eps )(\eta) \in \B(L^1_{x_1}, L^1_{x_0})$ uniformly in $\eps>0$ and $\eta\in\R^3$.
Let $0$ energy be regular as in Definition~\ref{def:0res}. 
Then uniformly in $\eps>0$ and $\eta \in \R^3$, one has $(\mc F_y T_+^\eps )(\eta) \in \B(L^{\infty}_{x_0}, L^{\infty}_{x_1})$ and $(\mc F_y T_+^\eps )(\eta) \in \B(L^1_{x_1}, L^1_{x_0})$.  The respective operator norms are bounded by $C\|V\|_{L^{\frac32,1}}$ with some absolute constant $C$. 
\end{lemma}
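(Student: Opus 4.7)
The phases $e^{\pm ix_j\eta}$ in \eqref{eq:t1'} and \eqref{eqn2.13} are unimodular and therefore preserve $L^{\infty}$ and $L^{1}$ norms, so the four mapping properties claimed reduce to operator-norm bounds on $R_0(|\eta|^2-i\eps)V$ and $R_V(|\eta|^2-i\eps)V$ acting between the corresponding Lebesgue spaces. Moreover, for a kernel $K(x_0,x_1)$ on $\R^{3}\times\R^{3}$, the ``forward'' operator $L^{\infty}_{x_0}\to L^{\infty}_{x_1}$, $f\mapsto\int K(x_0,\cdot)f(x_0)\,dx_0$, and the ``backward'' operator $L^{1}_{x_1}\to L^{1}_{x_0}$, $g\mapsto\int K(\cdot,x_1)g(x_1)\,dx_1$, are transposes of one another in the canonical bilinear pairing, so they share the same operator norm. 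It therefore suffices to prove the $L^{\infty}\to L^{\infty}$ bound in each case.

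For $T_{1+}^{\eps}$, I will use the uniform pointwise estimate $|R_0(|\eta|^2-i\eps)(x)|\le (4\pi|x|)^{-1}$, which follows from the explicit formula $R_0(|\eta|^2-i\eps)(x)=\frac{e^{i\sqrt{|\eta|^2-i\eps}\,|x|}}{4\pi|x|}$ with the branch $\Im\sqrt{z}\ge 0$. With $|\cdot|^{-1}\in L^{3,\infty}(\R^{3})$ and $V\in L^{3/2,1}$, Young's endpoint inequality \eqref{eq:Y2} gives
\[
\|R_0(|\eta|^2-i\eps)Vf\|_{\infty}\le \tfrac{1}{4\pi}\bigl\||\cdot|^{-1}\ast|Vf|\bigr\|_{\infty}\le C\|V\|_{L^{3/2,1}}\|f\|_{\infty},
\]
where $\frac{1}{\infty}+\frac{1}{1}=1$ makes the endpoint admissible. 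This yields the first assertion with an absolute constant $C$.

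For $T_{+}^{\eps}$, the key step is the algebraic identity $R_V V=(I+R_0V)^{-1}R_0V$, which is a rearrangement of the resolvent identity $(I+R_0V)R_V=R_0$ encoded in \eqref{2.29}. Under the $0$-regularity hypothesis, Lemma~\ref{lemma2.3} supplies the uniform bound $\|(I+R_0(|\eta|^2\pm i\eps)V)^{-1}\|_{\infty\to\infty}\le M_{0}<\infty$ for every $\eta\in\R^{3}$ and $\eps>0$. Composing with the previous step yields $\|R_V(|\eta|^2-i\eps)V\|_{\infty\to\infty}\le CM_{0}\|V\|_{L^{3/2,1}}$, uniformly in $\eta$ and $\eps$. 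For $V\in L^{3/2,1}$ that is not Schwartz, approximating by Schwartz potentials and passing to the limit using these uniform bounds justifies the distributional interpretation of the kernels. The only substantive ingredient is Lemma~\ref{lemma2.3}; everything else is direct manipulation with the resolvent identity and Young's inequality, so no genuine obstacle arises.
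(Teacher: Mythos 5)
Your argument is correct and follows essentially the same route as the paper's own (very terse) proof: reduce via the unimodular phases to operator bounds on $R_0(|\eta|^2-i\eps)V$ and $R_V(|\eta|^2-i\eps)V$, use the $L^{3,\infty}\!\ast\!L^{3/2,1}\to L^\infty$ endpoint together with H\"older for the free part, invoke Lemma~\ref{lemma2.3} for the inverse $(I+R_0V)^{-1}$, and obtain the $L^1$ statements by duality. You merely spell out the steps that the paper leaves implicit, and you are right that the $T_+^\eps$ bound carries the factor $M_0$, so that the final sentence of the lemma (``absolute constant'') should be read as referring to the $T_{1+}^\eps$ part.
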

\begin{proof}
The statements about $T_{1+}^\eps$ are easily obtained from \eqref{eq:Ho1}. The second statement concerning $L^1$ boundedness follows from the first by duality (note the reversal of the order of the variables). 
As far as $T_+^\eps$ is concerned, the first statement is Lemma~\ref{lemma2.3}, whereas the second follows by duality. 
\end{proof}

In the sequel, we shall employ the following form of the operators $W_{n+}^\eps $ and $W_+^\eps $, introduced by Yajima in \cite{yajima0}.

\begin{lemma}\lb{lemma2.1} 
Let $V$ be Schwartz. Then for any Schwartz functions  $f,g$ and 
for $\eps >0$, $n \geq 1$,
\be\begin{aligned}\lb{2.19}
\langle W_{n+}^\eps  f, g \rangle &= (-1)^n \int_{\set R^9} \mc F_{x_{0}}^{-1}T_{n+}^\eps (0, x, y) f(x-y) \ov g(x) \dd y \dd x
\end{aligned}\ee
and
\be\begin{aligned}\lb{2.20}
\langle W_+^\eps  f, g \rangle &= \langle f, g \rangle - \int_{\set R^9} \mc F_{x_{0}}^{-1} T_+^\eps (0, x, y) f(x-y) \ov g(x)  \dd y \dd x.
\end{aligned}\ee
These integrals are to be understood as distributional duality pairings. 
All our conclusions concerning $T_+^\eps $ apply equally to $T_-^\eps $.
\end{lemma}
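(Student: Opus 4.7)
My plan is to verify both \eqref{2.19} and \eqref{2.20} by a direct Fourier computation. Since $V$ is Schwartz, $f,g$ are Schwartz, and $\eps>0$, every time integral and Fubini interchange below converges absolutely and the ``distributional duality pairing'' just amounts to matching both sides on the Fourier side.

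For \eqref{2.19} I would first test $W_{n+}^\eps$ on a plane wave $e^{i\cdot\eta}$: each $e^{\pm itH_0}$ acts as the scalar $e^{\pm it|\eta|^2}$ and each intermediate multiplication by $V$ becomes convolution by $\widehat V$, thereby introducing fresh momenta $\xi_1,\ldots,\xi_n$. After tracing through the $n$ alternations in \eqref{eq2.5} and substituting $s_k=t_k-t_{k-1}$, the $n$ time integrals decouple into a product, each evaluating to $\int_0^\infty e^{is(|\xi_k|^2-|\eta|^2)-\eps s}\,ds=i/(|\xi_k|^2-|\eta|^2+i\eps)$ with $\xi_0=\eta$. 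Combining the resulting $i^n$ with the $i^n$ prefactor in \eqref{eq2.5} yields the overall $(-1)^n$ together with the exact denominator structure of \eqref{2.17}, after the shift $\xi_k\mapsto\xi_k+\eta$ encoded in \eqref{eq:fafbn}.

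On the right-hand side of \eqref{2.19}, I would use the $n$-fold analog of \eqref{eq:t1'}, namely $\mc F_y T_{n+}^\eps(x_0,x,\eta)=e^{ix_0\eta}((R_0V)^n)(x_0,x)e^{-ix\eta}$; integrating in $y$ converts $f(x-y)$ into $e^{ix\eta}\widehat f(\eta)$ and cancels the trailing $e^{-ix\eta}$, while integrating in $x_0$ produces $((R_0V)^n e^{i\cdot\eta})(x)$ because the paper indexes kernels by (input, output), so that $\int e^{ix_0\eta}((R_0V)^n)(x_0,x)\,dx_0=((R_0V)^n e^{i\cdot\eta})(x)$. A recursive Fourier expansion of $\widehat{(R_0V)^n e^{i\cdot\eta}}(\xi_n)$ produces exactly the same multilinear integral in $\widehat V$, $\widehat f$, $\widehat g$ that came from the wave-operator side, which establishes \eqref{2.19}.

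For \eqref{2.20} the same scheme works with the Dyson sum replaced by a single resolvent: acting on $e^{i\cdot\eta}$, the time integral in \eqref{eq2.6} is $i\int_0^\infty e^{it(H-|\eta|^2)-\eps t}\,dt=-R_V(|\eta|^2-i\eps)$, well defined since $\eps>0$ places $|\eta|^2-i\eps$ off the spectrum of the self-adjoint $H$. Hence $(W_+^\eps-I)e^{i\cdot\eta}=-R_V(|\eta|^2-i\eps)Ve^{i\cdot\eta}$, and repeating the $y$- and $x_0$-integration arguments with \eqref{eqn2.13} in place of \eqref{eq:t1'} yields \eqref{2.20}. I expect the main obstacle to be purely bookkeeping: keeping the Fourier convention, the $(2\pi)^3$ factors, and the paper's input--output indexing of kernels consistent throughout (in particular the asymmetric form of \eqref{eq:fafb}); the analogous statements for $T_-^\eps$ then follow verbatim from the $\eps\mapsto-\eps$ symmetry of the contour.
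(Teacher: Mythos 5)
Your proposal is correct and takes essentially the same approach as the paper's: both pass to the Fourier side (Plancherel, or equivalently your plane-wave testing), decouple the nested time integrals into products of resolvent factors $i/(|\xi_k|^2-|\eta|^2+i\eps)$, and then match the resulting multilinear momentum integrals against the Fourier-side definitions \eqref{2.16}--\eqref{2.17} and \eqref{eqn2.13} of $T_{n+}^\eps$ and $T_+^\eps$. The shift $\xi_k\mapsto\xi_k+\eta$ you note is exactly the paper's renaming $\eta_\ell-\eta_0=\xi_\ell$ in passing from~\eqref{eqn2.6}--\eqref{eqn2.7} to the stated form.
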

\begin{proof}
By Plancherel's identity
$$\begin{aligned}
\langle W_{1+}^\eps  f, g\rangle &= \frac i {(2\pi)^3} \int_0^{\infty} \int_{\R^6} e^{it|\eta_1|^2-\eps  t} \widehat V(\eta_1 - \eta_0) e^{-it|\eta_0|^2} \widehat f(\eta_0) \ov{\widehat g}(\eta_1) \dd \eta_1 \dd \eta_0 \dd t \\
&= - \frac 1 {(2\pi)^3} \int_{\R^6} \frac {\widehat V(\eta_1 - \eta_0)} {|\eta_1|^2 - |\eta_0|^2 + i \eps } \widehat f(\eta_0) \ov {\widehat g}(\eta_1) \dd \eta_1 \dd \eta_0.
\end{aligned}
$$
Setting $\eta_0 = \eta$,  $\eta_1 - \eta_0 = \xi$, we obtain  
\be\lb{eqn2.6}
\langle W_{1+}^\eps  f, g\rangle = - \frac 1 {(2\pi)^3} \int_{\R^6} \frac {\widehat V(\xi)}{|\eta+\xi|^2- |\eta|^2 + i \eps } \widehat f(\eta) \ov{\widehat g}(\eta+\xi) \dd \eta \dd \xi.
\ee
More generally, when $n \geq 1$,  
$$\begin{aligned}
\langle W_{n+}^\eps  f, g\rangle &= \frac {i^n}{(2\pi)^3} \int_{0=t_0\leq t_1\leq \ldots\leq t_n} \prod_{\ell = 1}^n \Big(e^{i(t_{\ell} - t_{\ell-1})|\eta_{\ell}|^2-(t_{\ell}-t_{\ell-1})\eps } \widehat V(\eta_{\ell}-\eta_{\ell-1}) \Big) \\
&e^{-it_n|\eta_0|^2} \widehat f(\eta_0) \ov{\widehat g}(\eta_n) \dd \eta_0 \ldots \dd \eta_n \dd t_1 \ldots \dd t_n,
\end{aligned}$$
with $t_0 = 0$. Integrating in $t_1, \ldots, t_n$ gives
$$\begin{aligned}
\langle W_{n+}^\eps  f, g\rangle &= \frac {(-1)^n}{(2\pi)^3} \int_{\R^{3(n+1)}} \frac {\prod_{\ell=1}^n \widehat V(\eta_{\ell}-\eta_{\ell-1})} {\prod_{\ell=1}^n (|\eta_{\ell}|^2 - |\eta_0|^2 + i \eps )} \widehat f(\eta_0) \ov {\widehat g}(\eta_n) \dd \eta_0 \ldots \dd \eta_n.
\end{aligned}$$
Renaming  $\eta_0 = \eta$,  $\eta_{\ell} -\eta_0 = \xi_{\ell}$ leads to 
\be\begin{aligned}\lb{eqn2.7}
\langle W_{n+}^\eps  f, g\rangle = \frac {(-1)^n}{(2\pi)^3} \int_{\R^{3(n+1)}} \frac{\prod_{\ell=1}^n \widehat V(\xi_{\ell} - \xi_{\ell-1}) \dd \xi_1 \ldots \dd \xi_{n-1}}{\prod_{\ell=1}^n (|\eta+\xi_{\ell}|^2- |\eta|^2 + i \eps )} \widehat f(\eta) \ov{\widehat g}(\eta+\xi_n) \dd \eta \dd \xi_n.
\end{aligned}\ee
with $\xi_0 = 0$. 
Then
\be\lb{dual_formula}\begin{aligned}
\langle W_{n+}^\eps  f, g \rangle &= \frac{(-1)^n}{(2\pi)^3} \int_{\set R^6} \mc F^{-1}_{x_0} \mc F_{x_n, y} T_{n+}^\eps (0, \xi_n, \eta) \widehat f(\eta) \ov {\widehat g}(\eta + \xi_n) \dd \eta \dd \xi_n \\
&= (-1)^n \int_{\set R^9} \mc F_{x_{0}}^{-1 }T_{n+}^\eps (0, x, y) f(x-y) \ov g(x) \dd y \dd x.
\end{aligned}\ee
As far as the wave operators are concerned, we have 
$$\begin{aligned}
\langle W_+^\eps  f, g\rangle &=  \langle f, g \rangle+ \frac i {(2\pi)^3} \int_0^{\infty} \int_{\R^6} \mc F^{-1}_a \mc F_b \big(e^{itH-t\eps }V\big)(\eta_0, \eta_1) e^{-it|\eta_0|^2} \widehat f(\eta_0) \ov{\widehat g}(\eta_1) \dd \eta_1 \dd \eta_0 \dd t \\
&= \langle f, g \rangle - \frac 1 {(2\pi)^3} \int_{\R^6} \mc F^{-1}_a \mc F_b \big(R_V(|\eta_0|^2 - i\eps ) V\big)(\eta_0, \eta_1) \widehat f(\eta_0) \ov{\widehat g}(\eta_1) \dd \eta_1 \dd \eta_0 \\
&= \langle f, g \rangle - \frac 1 {(2\pi)^3} \int_{\R^6} \mc F^{-1}_a \mc F_b (R_V(|\eta|^2 - i\eps ) V)(\eta, \eta+\xi) \widehat f(\eta) \ov {\widehat g}(\eta + \xi) \dd \eta \dd \xi.
\end{aligned}
$$
Then 
\be\begin{aligned}\lb{eqn2.29}
\langle W_+^\eps  f, g \rangle &= \langle f, g \rangle - \frac 1 {(2\pi)^3} \int_{\set R^6} \mc F^{-1}_{x_0} \mc F_{x_1, y} T_+^\eps (0, \xi_1, \eta) \widehat f(\eta) \ov {\widehat g}(\eta + \xi_1) \dd \eta \dd \xi_1 \\
&= \langle f, g \rangle - \int_{\set R^9} \mc F_{x_{0}}^{-1}T_+^\eps (0, x, y) f(x-y) \ov g(x) \dd y \dd x
\end{aligned}\ee
as desired. 
\end{proof}


\section{The first order term in the Born series}
\label{sec:Born1}

We now set out to analyze the operator
\[
W_{1+}^\eps  f  = i\int_0^\infty e^{-it\Delta} V e^{it\Delta-t\eps }f\, dt
\]
introduced in the previous section. From \eqref{2.19}, for Schwartz functions $V$ and $f$, 
\EQ{\label{eq:KR}
W_{1+}^\eps  f(x) &=  \int_{\R^3} K_{1+}^\eps (x,x-y) f(y)\, dy \\
K_{1+}^\eps (x,z) &= -\lim_{R\to\infty} \int_{\R^6}  \frac{e^{ix\cdot \xi}\, \what{V}(\xi)\,e^{iz\cdot \eta}}{|\xi+\eta|^2-|\eta|^2+i\eps } e^{-\frac{|\eta|^2}{2R^2}}\,d\xi d\eta
}
The Gaussian was introduced to ensure convergence of the $\eta$ integral. In this section, we will show the existence of the limit in~\eqref{eq:KR} and find the kernel. 

For future reference, we remark that by (\ref{eqn2.6}) the kernel associated to $W_{1+}$ is $-T_{1+}$, i.e., 
\EQ{ \lb{eq:K1+T}
(\mc F_{x,y}K_{1+}^\eps) (\xi,\eta) &= -(\mc F^{-1}_{x_0} \mc F_{x, y} T_{1+}^\eps )(0, \xi, \eta) \\
& =  -\frac {\widehat V(\xi)}{|\eta+\xi|^2 - |\eta|^2 + i \eps }\\
K_{1+}^\eps (x,y) &= - \int_{\R^{3}} T_{1+}^{\eps}(x_{0},x,y)\, dx_{0}
}
where the final equality is formal. 
Integrating in $x_0$, as in  (\ref{2.19}) and (\ref{2.20}), corresponds to setting $\xi_0 = 0$ in (\ref{2.16}) and (\ref{2.17}).  

\begin{lemma}
For any $\eps >0$ and all $a, x\in\R$ one has 
\EQ{\label{eq:PV}
\text{P.V.} \int_{-\infty}^\infty \frac{e^{ix\xi}}{\xi+a+ i\eps }\, d\xi = -2\pi i \one_{[x<0]} e^{\eps  x}  e^{-ixa}
}
The left-hand side agrees with the inverse distributional Fourier transform of $(\xi+a+i\eps )^{-1}$. 
\end{lemma}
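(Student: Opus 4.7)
The plan is a direct contour integration, with a preliminary translation of the integration variable to eliminate the parameter $a$, followed by a brief justification that the pointwise formula coincides with the distributional inverse Fourier transform.

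First, substitute $\xi \mapsto \xi - a$ in the principal value integral. This is legitimate because the truncated integrals $\int_{-R}^{R}$ converge as $R\to\infty$ (the integrand is continuous on $\R$ and decays like $|\xi|^{-1}$, so oscillation of $e^{ix\xi}$ gives convergence for each fixed $x\ne 0$). The substitution produces
\EQ{
\text{P.V.}\int_{-\infty}^{\infty} \frac{e^{ix\xi}}{\xi+a+i\eps}\,d\xi = e^{-ixa}\,\text{P.V.}\int_{-\infty}^{\infty}\frac{e^{ix\xi}}{\xi+i\eps}\,d\xi,
}
reducing the claim to the case $a=0$.

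Second, apply the residue theorem to $g(\xi):=e^{ix\xi}/(\xi+i\eps)$, which is meromorphic on $\C$ with a single simple pole at $\xi=-i\eps$ lying strictly in the lower half plane (since $\eps>0$). For $x>0$, close the contour along the upper semicircle $C_{R}^{+}$ of radius $R$; Jordan's lemma applies because $|g(\xi)|\le (|\xi|-\eps)^{-1}$ for $|\xi|>\eps$, so the semicircular contribution vanishes as $R\to\infty$. No poles are enclosed, and the integral equals $0$, which matches the right-hand side since $\one_{[x<0]}=0$. For $x<0$, close along the lower semicircle $C_{R}^{-}$, traversed clockwise; Jordan's lemma again kills the semicircular piece, and the residue theorem (with the sign flip from clockwise orientation) gives
\EQ{
\text{P.V.}\int_{-\infty}^{\infty}\frac{e^{ix\xi}}{\xi+i\eps}\,d\xi = -2\pi i \,\mathrm{Res}_{\xi=-i\eps}\,\frac{e^{ix\xi}}{\xi+i\eps} = -2\pi i\, e^{\eps x}.
}
Combining with the prefactor $e^{-ixa}$ yields the claimed identity for all $x\ne 0$.

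Third, to identify the left-hand side with the distributional inverse Fourier transform of $(\xi+a+i\eps)^{-1}$, note that this function is bounded and locally integrable on $\R$, hence a tempered distribution. For any $\varphi\in\calS(\R)$ one has, by Fubini and dominated convergence,
\EQ{
\big\langle \mc F^{-1}(\xi+a+i\eps)^{-1}, \varphi\big\rangle &= \lim_{R\to\infty}\int_{-R}^{R} \frac{\what\varphi(-\xi)}{\xi+a+i\eps}\,d\xi \\
&= \int_{\R} \Big(\lim_{R\to\infty}\int_{-R}^{R}\frac{e^{ix\xi}}{\xi+a+i\eps}\,d\xi\Big)\varphi(x)\,dx,
}
where the inner limit is exactly the principal value integral computed above. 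The exchange of limit and outer integral is justified by uniform boundedness of the truncated integrals in $x$ together with the Schwartz decay of $\varphi$; this reduces to standard estimates from the theory of oscillatory integrals against smooth symbols of order $-1$.

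The only delicate point is the uniform-in-$x$ control needed in the last step, since the principal value integrals in $x$ are only conditionally convergent. This is handled by integrating by parts once in $\xi$ on the tails $|\xi|\ge 1$, which replaces $e^{ix\xi}/(\xi+a+i\eps)$ by an absolutely integrable expression (at the cost of a factor $1/(ix)$, harmless against Schwartz $\varphi$); this is the main routine computation but presents no genuine obstacle.
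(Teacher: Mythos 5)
Your first two steps are exactly the residue computation the paper has in mind (``This is a standard residue calculation''), and they are correct: after translating $\xi\mapsto\xi-a$ (legitimate because, for $x\ne 0$, the improper Riemann integral converges by Dirichlet's test, so the shift of the integration window costs $o(1)$), the pole at $\xi=-i\eps$ lies in the lower half-plane and closing the contour above or below according to the sign of $x$ gives $0$ for $x>0$ and $-2\pi i\,e^{\eps x}$ for $x<0$.

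The last step, however, contains a small but real error. You bound the tail $\int_{|\xi|\ge 1}$ by $C/|x|$ via one integration by parts and declare the factor $1/|x|$ ``harmless against Schwartz $\varphi$.'' It is not: $1/|x|$ is \emph{not} locally integrable on $\R$, so $\int |\varphi(x)|/|x|\,dx$ may diverge for a generic Schwartz $\varphi$ (one with $\varphi(0)\neq 0$). The correct fact, which is what you actually need for dominated convergence, is stronger: the truncated integrals $I_R(x):=\int_{-R}^{R}\frac{e^{ix\xi}}{\xi+a+i\eps}\,d\xi$ are uniformly bounded in both $R$ and $x$. To see this, write $\frac{1}{\xi+i\eps}=\frac{\xi}{\xi^2+\eps^2}-\frac{i\eps}{\xi^2+\eps^2}$; the second piece gives $\bigl|\int\bigr|\le\pi$, while for the first piece one has $\int_{-R}^{R}\frac{\xi e^{ix\xi}}{\xi^2+\eps^2}\,d\xi=2i\int_{0}^{R}\frac{\xi\sin(x\xi)}{\xi^2+\eps^2}\,d\xi$, which is bounded uniformly by splitting at $\xi\sim 1/|x|$, using $|\sin(x\xi)|\le|x\xi|$ below and a single integration by parts above. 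With this uniform bound the interchange of limit and integral is immediate. The paper is terser but the intended argument is the same residue calculation; just repair the uniform-in-$x$ bound as indicated.
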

\begin{proof}
This is a standard residue calculation. 
\end{proof}

The denominator in \eqref{eq:KR} is $|\xi+\eta|^2-|\eta|^2+i\eps  = |\xi|^2+2\xi\cdot\eta +i\eps $.  Up to a rotation this leads to an integral of the type~\eqref{eq:PV}. 

\begin{lemma} \label{lem:G2}
For any $\eps >0$, $R>0$, and $\xi\in\R^3$, $\xi\ne0$,  one has 
\EQ{\label{eq:G2} 
&\int_{\R^3} \frac{e^{iz\cdot \eta} e^{-\frac{|\eta|^2}{2R^2}}}{|\xi|^2+2\xi\cdot\eta +i\eps } \, d\eta  =\const \frac{R^2}{|\xi|} e^{-\frac{R^2}{2} |P_\xi^\perp z |^2} 
 \int_{-\infty}^0 e^{\frac{\eps u}{2|\xi|}}  e^{-i \frac{u}{2}|\xi|} \: R e^{-\frac{R^2}{2} (z\cdot \hat{\xi} -u)^2}\, du
}
with $\hat{\xi}=\frac{\xi}{|\xi|}$ and $|P_\xi^\perp z|^2= |z|^2-(\hat \xi \cdot z)^2$. 
\end{lemma}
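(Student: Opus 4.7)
The plan is to exploit the rotational symmetry of the problem to reduce to a one-dimensional integral in the $\xi$-direction (where the denominator is degenerate), and then to apply the distributional identity~\eqref{eq:PV}.

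First, decompose $\eta = \eta_\parallel \hat\xi + \eta_\perp$ where $\eta_\parallel=\eta\cdot\hat\xi\in\R$ and $\eta_\perp \in \hat\xi^{\perp}\simeq \R^{2}$, and split $z = (z\cdot\hat\xi)\hat\xi + P_\xi^\perp z$ analogously. Since the change of variable is orthogonal, $|\eta|^{2}=\eta_\parallel^{2}+|\eta_\perp|^{2}$ and $z\cdot\eta = (z\cdot\hat\xi)\eta_\parallel + (P_\xi^\perp z)\cdot \eta_\perp$, while the denominator simplifies to $|\xi|^{2}+2|\xi|\eta_\parallel+i\eps$. The Gaussian factor $e^{-|\eta|^{2}/(2R^{2})}$ ensures absolute integrability so that Fubini applies.

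Next, carry out the two-dimensional Gaussian integral in $\eta_\perp$, which yields
\[
\int_{\R^{2}} e^{i (P_\xi^\perp z)\cdot \eta_\perp} e^{-|\eta_\perp|^{2}/(2R^{2})}\, d\eta_\perp = \const\, R^{2} e^{-\tfrac{R^{2}}{2}|P_\xi^\perp z|^{2}},
\]
which already produces the prefactors on the right side of~\eqref{eq:G2}. The remaining one-dimensional integral to analyze is
\[
I := \int_{\R} \frac{e^{i(z\cdot\hat\xi)\eta_\parallel}\,e^{-\eta_\parallel^{2}/(2R^{2})}}{2|\xi|\bigl(\eta_\parallel + \tfrac{|\xi|}{2} + i\tfrac{\eps}{2|\xi|}\bigr)}\, d\eta_\parallel,
\]
after factoring $2|\xi|$ out of the denominator.

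To handle $I$, I would represent the Gaussian weight via its inverse Fourier transform,
\[
e^{-\eta_\parallel^{2}/(2R^{2})} = \const\, R \int_{\R} e^{i\eta_\parallel u}\, e^{-R^{2}u^{2}/2}\, du,
\]
then interchange the order of integration (justified by absolute convergence). The inner $\eta_\parallel$-integral becomes exactly the one evaluated in~\eqref{eq:PV}, with $a=|\xi|/2$, $\eps\rightsquigarrow \eps/(2|\xi|)$, and $x= z\cdot\hat\xi+u$, producing a factor
\[
-2\pi i\,\one_{[z\cdot\hat\xi + u<0]}\, e^{\tfrac{\eps}{2|\xi|}(z\cdot\hat\xi+u)}\,e^{-i(z\cdot\hat\xi+u)|\xi|/2}.
\]
Finally, substitute $u \mapsto u - z\cdot\hat\xi$ (so the indicator becomes $\one_{[u<0]}$ and the Gaussian in $u$ becomes $e^{-R^{2}(z\cdot\hat\xi - u)^{2}/2}$), combine with the $\eta_\perp$-integral, and collect the resulting constants into $\const$ to recover exactly the right-hand side of~\eqref{eq:G2}.

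The only delicate points are (i) the Fubini/order-of-integration swap and the use of~\eqref{eq:PV} as a distributional identity --- both of which are legitimate because the Gaussian $e^{-|\eta|^{2}/(2R^{2})}$ keeps everything Schwartz in $\eta_\parallel$ at each fixed $R<\infty$ --- and (ii) correctly keeping track of the rotation so that $z\cdot\hat\xi$ and $P_\xi^\perp z$ appear in the final formula. No other difficulties are expected, since the identity~\eqref{eq:PV} already does the heavy lifting.
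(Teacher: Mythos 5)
Your proof is correct and follows essentially the same route as the paper: decompose $\eta$ into components parallel and perpendicular to $\hat\xi$, evaluate the perpendicular $2$-dimensional Gaussian integral directly, and handle the parallel integral by combining a Gaussian Fourier representation with the residue identity~\eqref{eq:PV}. The only cosmetic difference is that the paper first rescales $\eta\mapsto \eta/(2|\xi|)$ (and rotates to $\xi=(|\xi|,0,0)$) before separating variables, whereas you separate first and factor $2|\xi|$ out of the denominator; the mechanism and bookkeeping are otherwise identical.
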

\begin{proof}
We first treat the case $\xi=(|\xi|,0,0)$. Then
the left-hand side of \eqref{eq:G2}  equals 
\EQ{
& \frac{1}{8|\xi|^3} \int_{\R^3} \frac{e^{i\frac{z\cdot \eta}{2|\xi|}}\: e^{-\frac{|\eta|^2}{8R^2|\xi|^2}} }{\hat\xi\cdot \eta + |\xi|^2 +i\eps}\, d\eta \\
&  =  \frac{1}{8|\xi|^3} \int_{\R} \frac{e^{i\frac{z_1 \eta_1}{2|\xi|}}\: e^{-\frac{\eta_1^2}{8R^2|\xi|^2}} }{  \eta_1 + |\xi|^2 +i\eps}\, d\eta_1 \int_{\R^2} e^{i \frac{z'\cdot \eta'}{2|\xi|} } \:  e^{-\frac{ |\eta'|^2 }{8R^2 |\xi|^2 }} \, d\eta'  \\
&= \const R^2 e^{-\frac{R^2}{2}|z'|^2} R \int_{-\infty}^0 e^{\eps v} e^{-iv|\xi|^2} e^{-2R^2 |\xi|^2 \big( \frac{z_1}{2|\xi|} - v\big)^2}\, dv 
}
where we used \eqref{eq:PV} in the final equality. Substituting $v=\frac{u}{2|\xi|}$ in the integral on the right-hand side shows that the previous line equals 
\[
\const \frac{R^2}{|\xi|} e^{-\frac{R^2}{2} |z'|^2} \int_{-\infty}^0 e^{\frac{\eps u}{2|\xi|}}  e^{-i \frac{u}{2}|\xi|} \: R e^{-\frac{R^2}{2} (z_1-u)^2}\, du
\]
as desired. The general case now follows by rotating the coordinate frame. 
\end{proof}

In the limit $R\to\infty$ the right-hand side of \eqref{eq:G2} converges in the sense of distributions to 
\[
\const |\xi|^{-1} \delta_0( P_\xi^\perp z) \one_{[z\cdot\xi<0]} \; e^{\frac{\eps z\cdot \hat{\xi}}{2|\xi|} } e^{-i{z\cdot \xi}/2}  
\]
We can now compute the kernel in \eqref{eq:KR}. 

\begin{lemma}\label{lem:Kexp}
Assume $V$ is a Schwartz potential. 
For any  $\eps\ge0$ and $x,z\in \R^3$, $z\ne0$, one has 
\EQ{\label{eq:K1V}
K^\eps _{1+}(x,z) = \const |z|^{-2}  \int_0^\infty e^{-is\hat{z}\cdot(x-z/2)} \widehat{V}(-s\hat{z}) e^{-\eps \frac{|z|}{2s}}\: s\,ds
}
where $\hat{z}=z/|z|$. 
\end{lemma}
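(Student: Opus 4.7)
\medskip

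The plan is to insert the identity of Lemma~\ref{lem:G2} into the defining formula~\eqref{eq:KR} for the kernel and analyze the resulting double-Gaussian approximate identity in the limit $R\to\infty$. The two Gaussian factors there, namely $R^{2}e^{-\frac{R^{2}}{2}|P_{\xi}^{\perp}z|^{2}}$ and $Re^{-\frac{R^{2}}{2}(z\cdot\hat\xi-u)^{2}}$, play the role of a 2-dimensional and a 1-dimensional delta, respectively. The first pins $\xi$ to the line $\R\hat z$; the second pins the internal variable $u$ to $u=z\cdot\hat\xi$; and the constraint $u\in(-\infty,0]$ combined with $z\cdot\hat\xi<0$ selects the half-line $\xi=-s\hat z$, $s>0$. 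Together this should give exactly the announced one-dimensional integral.

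Concretely, I would substitute Lemma~\ref{lem:G2} into~\eqref{eq:KR} (which is permissible by Fubini, since $V$ is Schwartz, $\eps>0$, and the $\eta$-integral is absolutely convergent for every $R<\infty$), pass to spherical coordinates $\xi=r\omega$, $r>0$, $\omega\in\Sph^{2}$, and note that $|P_{\xi}^{\perp}z|=|P_{\omega}^{\perp}z|$ is independent of $r$. This decouples the radial integral in $r$ from the angular integral on $\Sph^{2}$, so that it remains to establish the two sphere-level limits
\[
\int_{\Sph^{2}} g(\omega)\,R^{2}e^{-\frac{R^{2}}{2}|P_{\omega}^{\perp}z|^{2}}\,d\omega \xrightarrow[R\to\infty]{}\, \frac{2\pi}{|z|^{2}}\bigl(g(\hat z)+g(-\hat z)\bigr)
\]
and, in tandem, $R e^{-\frac{R^{2}}{2}(z\cdot\hat\xi-u)^{2}}\,du\to \sqrt{2\pi}\,\delta_{u=z\cdot\hat\xi}$. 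The first is proved by parametrizing near $\pm\hat z$ via $\omega=\mp\hat z\cos\phi+e(\psi)\sin\phi$, using $|P_{\omega}^{\perp}z|=|z|\sin\phi$ and $d\omega=\sin\phi\,d\phi\,d\psi$, and then computing
\[
\int_{0}^{2\pi}\!\!\int_{0}^{\pi}R^{2}e^{-\frac{R^{2}}{2}|z|^{2}\sin^{2}\phi}\sin\phi\,d\phi\,d\psi \xrightarrow[R\to\infty]{}\frac{2\pi}{|z|^{2}}
\]
by the change of variables $u=R|z|\phi$.

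The constraint $u<0$ from the $\int_{-\infty}^{0}$ in Lemma~\ref{lem:G2} translates, after $u=z\cdot\hat\xi$, into $z\cdot\omega<0$, killing the contribution from $\omega=\hat z$ and retaining only $\omega=-\hat z$. At $\omega=-\hat z$, i.e.\ $\xi=-r\hat z$, one has $z\cdot\hat\xi=-|z|$, so the exponentials combine as
\[
e^{ix\cdot\xi}e^{\frac{\eps u}{2|\xi|}}e^{-iu|\xi|/2}\Big|_{\xi=-r\hat z,\,u=-|z|} = e^{-ir\hat z\cdot x}\,e^{-\frac{\eps|z|}{2r}}\,e^{ir|z|/2}=e^{-ir\hat z\cdot(x-z/2)}\,e^{-\frac{\eps|z|}{2r}}.
\]
Assembling the radial measure $r^{2}dr$ with the $|\xi|^{-1}=r^{-1}$ prefactor from Lemma~\ref{lem:G2} and the $|z|^{-2}$ coming from the sphere asymptotic produces the factor $r\,dr/|z|^{2}$ that appears in~\eqref{eq:K1V}. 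Renaming $r=s$ yields the stated formula.

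The main obstacle is justifying the exchange $\lim_{R\to\infty}\int_{\R^{3}}\cdots d\xi = \int_{\R^{3}}\lim_{R\to\infty}\cdots d\xi$, i.e.\ proving the angular limit uniformly enough in $r$ to apply dominated convergence in the radial variable. This is where Schwartz-ness of $V$ matters: $|\widehat V(-r\hat z)|$ decays faster than any polynomial in $r$ and is bounded near $r=0$, and one can dominate the integrand uniformly in $R$ by a multiple of $r|\widehat V|_{*}(r)\,e^{-c|z|^{2}\phi^{2}\cdot 1}$ after using $\sin\phi\gtrsim\phi$ on $[0,\pi/2]$ and a crude bound on the complementary sector; this makes Fubini and dominated convergence legitimate. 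Case $z=0$ is excluded by hypothesis, so no issue arises from the $|z|^{-2}$ factor.
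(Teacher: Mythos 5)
Your proposal is correct and follows essentially the same route as the paper: substitute the Gaussian-regularized identity of Lemma~\ref{lem:G2} into~\eqref{eq:KR}, pass to spherical coordinates in $\xi$, and let the two-parameter Gaussian concentration (on $\Sph^2$ near $\pm\hat z$, and in $u$ near $z\cdot\hat\xi$, with the $u<0$ constraint selecting the half-line $\xi\in\R_{<0}\hat z$) collapse the integral onto the stated one-dimensional formula. You are somewhat more explicit than the paper about the spherical-cap asymptotics and the justification for exchanging the $R\to\infty$ limit with the $\xi$-integral (the paper simply records that $J$ is uniformly bounded and converges pointwise), but the decomposition, the role of each Gaussian, and the phase bookkeeping at $\xi=-s\hat z$ are identical.
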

\begin{proof}
By \eqref{eq:KR},
\EQ{
K^\eps_{1+}(x,z) &= \lim_{R\to\infty} \int_{\R^6}  \frac{e^{ix\cdot \xi} \;\what{V}(\xi)e^{iz\cdot \eta}}{|\xi+\eta|^2-|\eta|^2+i\eps } e^{-\frac{|\eta|^2}{2R^2}}\,d\xi d\eta \\
 &=  \const \lim_{R\to\infty} \int_{\R^3}  e^{ix\cdot \xi}  \frac{\what{V}(\xi) }{|\xi|}   R^2\exp\big( - \frac{R^2}{2}  |P_\xi^\perp z|^2\big) \\ %
 &\qquad\qquad\cdot \int_{-\infty}^0  e^{\frac{\eps u}{2|\xi|}}  e^{-i \frac{u}{2}|\xi|} \: R e^{-\frac{R^2}{2} (z\cdot \hat{\xi} -u)^2}\, du    \,  d\xi
}
We set 
\[
J(\xi;\eps,R,z) =  \int_{-\infty}^0  e^{\frac{\eps u}{2|\xi|}}  e^{-i \frac{u}{2}|\xi|} \: R e^{-\frac{R^2}{2} (z\cdot \hat{\xi} -u)^2}\, du
\]
Then 
\EQ{
0\le J(\xi;\eps,R,z)  &\le \int_{-\infty}^\infty e^{-u^2/2}\, du = \sqrt{2\pi} \\
J (\xi;\eps,R,z)  &\to  \const \one_{[z\cdot\xi<0]} e^{-iz\cdot\xi/2} e^{\eps \frac{z\cdot\hat{\xi}}{2|\xi|}} \text{ \ \ as\ \ }R\to\infty
}
whence, writing $\xi=-s\omega$, $|\omega|=1$, 
\EQ{\label{eq:Keps1+}
K^\eps_{1+}(x,z) &= \const \lim_{R\to\infty} \int_0^\infty \int_{\Sph^2 } e^{-isx\cdot \omega} \; \what{V}(-s\omega) \\ 
&\qquad R^2 e^{-\frac12R^2 |z|^2 \sin^2(\angle(z,\omega))}
J(-s\omega;\eps,R,z) s\, dsd\omega \\
&= \const |z|^{-2}\int_0^\infty e^{-isx\cdot \hat{z}} \; \what{V}(-s\hat{z})e^{is|z|/2} e^{-\eps \frac{|z|}{2s}}\: s\,ds   
}
which is \eqref{eq:K1V}.
\end{proof}

Following Yajima, we express the kernel in Lemma~\ref{lem:Kexp} in terms of the function $L$ which we now define. 

\begin{definition}\label{def:L}
Let $V$ be a Schwartz potential. Then for any $\omega\in \Sph^2 $, $r\in\R$
\EQ{\label{eq:L}
L(r,\omega) := \int_0^\infty \what V(-s\omega)e^{i\frac{rs}{2}} \,s\, ds
}
\end{definition}

In particular, from \eqref{eq:K1V}, 
\EQ{\label{eq:KL}
K_{1+} (x,z) &=  \const |z|^{-2}   L(|z|-2x\cdot\hat z, \hat z)
}
The kernels $K_{1+}^\eps (x,z)$ are of  the form, see \eqref{eq:Keps1+},  
 \EQ{\label{eq:KLeps}
K_{1+}^\eps (x,z) &=  \const     L_\eps(|z|-2x\cdot\hat z, z)
}
where 
\EQ{\label{eq:Leps}
L_\eps (r, \omega) := \int_0^\infty \what V(-s\omega)e^{i\frac{rs}{2}|\omega|} e^{-  \frac{\eps}{2s}}\: s\, ds
}
In the previous line $\omega$ need not be a unit vector. 

\begin{cor}
\label{cor:K1+}
Assume $V$ is Schwartz. Let $S_\omega := x-2(\omega\cdot x) \omega$ be the reflection about the plane $\omega^\perp$. Then for all Schwartz functions $f$ one has 
\EQ{\label{eq:g1}
(W_{1+}f)(x) = \int_{\Sph^2 }\int_{\R^3} g_1(x,dy,\omega) f(S_\omega x-y)\,  \sigma(d\omega)
}
where  for fixed $x\in\R^3$, $\omega\in \Sph^2 $ the expression $g_1(x,\cdot,\omega)$ is a measure satisfying 
\EQ{\lb{eq:g1meas}
 \int_{\Sph^2 }  \| g_1(x,dy,\omega)\|_{\mes_{y} L^\infty_x}  \, d\omega \le \int_{\Sph^2 }\int_{\R} |L(r,\omega)|\, drd\omega 
}
where $\|\cdot \|_{\mes}$ refers to the total variation norm of Borel measures. 
\end{cor}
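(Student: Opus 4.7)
\medskip
\noindent\textbf{Proof proposal.} The plan is a direct change of variables on the kernel formula from Lemma~\ref{lem:Kexp}. Starting from $(W_{1+}f)(x) = \int_{\R^3} K_{1+}(x,x-y) f(y)\dd y$ and the substitution $z = x-y$ together with \eqref{eq:KL}, I would first write
\[
(W_{1+}f)(x) \;=\; \const \int_{\R^3} |z|^{-2}\, L(|z|-2x\cdot\hat z,\,\hat z)\, f(x-z)\dd z.
\]
Polar coordinates $z=\rho\omega$ with $\rho>0$, $\omega\in\Sph^2$, $\dd z = \rho^2\dd\rho\dd\sigma(\omega)$ are tailor-made here: the Jacobian cancels the $|z|^{-2}$, producing
\[
(W_{1+}f)(x) \;=\; \const \int_{\Sph^2}\!\int_0^\infty L(\rho-2x\cdot\omega,\,\omega)\, f(x-\rho\omega)\dd\rho\dd\sigma(\omega).
\]

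The crucial observation is the algebraic identity
\[
x - \rho\omega \;=\; x - 2(x\cdot\omega)\omega - (\rho-2x\cdot\omega)\omega \;=\; S_\omega x - r\omega,\qquad r:=\rho-2x\cdot\omega,
\]
which is precisely what motivates the reflection $S_\omega$ in the statement. I would then change variables from $\rho$ to $r$ on each ray (Jacobian $1$), keeping track of the new lower limit $r>-2x\cdot\omega$, to obtain
\[
(W_{1+}f)(x) \;=\; \const \int_{\Sph^2}\!\int_{-2x\cdot\omega}^{\infty} L(r,\omega)\, f(S_\omega x - r\omega)\dd r\dd\sigma(\omega).
\]

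Next, for fixed $x$ and $\omega$, I would define $g_1(x,\cdot,\omega)$ as the pushforward of the one-dimensional signed measure $\const\,\one_{[r>-2x\cdot\omega]}\,L(r,\omega)\dd r$ under the embedding $r\mapsto r\omega$ into $\R^3$; this is a signed Borel measure on $\R^3$ supported on the line $\R\omega$, and the identity \eqref{eq:g1} is exactly the formula above. Since the pushforward preserves total variation,
\[
\|g_1(x,\cdot,\omega)\|_{\mes_y} \;=\; \const \int_{-2x\cdot\omega}^{\infty} |L(r,\omega)|\dd r \;\le\; \const \int_{\R} |L(r,\omega)|\dd r,
\]
which is independent of $x$, so taking the supremum in $x$ and integrating in $\omega$ yields~\eqref{eq:g1meas}.

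Since $V$ is Schwartz, $\widehat V$ is Schwartz and $L(r,\omega)$ from Definition~\ref{def:L} is a well-defined Schwartz-class function of $r$ (for each $\omega$), so every manipulation above (Fubini, the polar change of variables, the affine change of variables on each ray) is routine. The only point worth checking carefully is that the $x$-dependent cutoff $\one_{[r>-2x\cdot\omega]}$ does not interfere with defining $g_1(x,\cdot,\omega)$ as a measure whose total variation is bounded uniformly in $x$; that is immediate because enlarging the domain of integration from $(-2x\cdot\omega,\infty)$ to $\R$ only increases $\int|L|$. There is no genuine obstacle here: the corollary is essentially a repackaging of Lemma~\ref{lem:Kexp} once the identity $x-\rho\omega = S_\omega x - r\omega$ is recognized.
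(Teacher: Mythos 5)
Your proposal is correct and follows essentially the same path as the paper: both start from the kernel formula in Lemma~\ref{lem:Kexp}, pass to polar coordinates so that the $|z|^{-2}$ cancels the Jacobian, use the identity $x-\rho\omega = S_\omega x - (\rho - 2x\cdot\omega)\omega$ to shift the ray parameter, and then define $g_1(x,\cdot,\omega)$ as a signed measure supported on the line $\ell_\omega=\{r\omega : r\in\R\}$ carrying the density $\one_{[r>-2x\cdot\omega]}L(r,\omega)$, after which the total-variation bound follows by discarding the indicator. The paper simply writes this measure as $\one_{[(y+2x)\cdot\omega>0]}\,L(y\cdot\omega,\omega)\,\calH^1_{\ell_\omega}(dy)$ rather than as an explicit pushforward, which is the same object.
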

\begin{proof}
Eq.~\eqref{eq:KR} and \eqref{eq:K1V} imply that
\EQ{
(W_{1+}f)(x) &= \int_0^\infty \int_{\Sph^2 } L(r-2\omega\cdot x,\omega) f(x-r\omega)\, drd\omega \\
&= \int_{\Sph^2 } \int_{\R} \one_{[r>-2\omega\cdot x]} L(r,\omega) f(x-2(\omega\cdot x) \omega -r\omega)\, drd\omega
}
Set
\EQ{\lb{eq:g1def}
g_{1}(x,dy,\omega):= \one_{[(y+2x)\cdot\omega>0]} L(y\cdot\omega,\omega) \, \calH^1_{\ell_\omega}(dy) 
}
where $\ell_\omega=\{r\omega\:|\: r\in\R\}$ and $\calH^1_{\ell_\omega}$ is the $1$--dimensional Hausdorff measure on the line $\ell_\omega$. Then \eqref{eq:g1} holds and 
\EQ{\lb{eq:g1sup}
\| g_1(x,dy,\omega)\|_{L^\infty_x} = | L(y\cdot\omega,\omega)| \, \calH^1_{\ell_\omega}(dy) 
}
which implies \eqref{eq:g1meas}. 
\end{proof}


\section{Estimating the function $L(r,\omega)$ and the kernel $K_{1+}$}

Next, we estimate various norms of $L(r,\omega)$ such as the one in \eqref{eq:g1meas}, in terms of norms of $V$ in the $B$-spaces from Definition~\ref{def:Bspace}. 

\begin{prop}\lb{lemma2.9} 
Let $L$ be as in Definition~\ref{def:L}, and $V$ be a Schwartz function. Then, with $r\in\R$ and $\omega\in \Sph^2 $, 
\EQ{\label{eq:L2V}
\|L(r,\omega)\|_{L^2_{r, \omega}} \les \|V\|_{L^2}
}
and
\be\lb{est_b}
\begin{aligned}
\|L(r,\omega)\|_{L^1_{r, \omega}} \les \sum_{k \in \Z} 2^{k/2} \|\one_{[2^k, 2^{k+1}]}(|r|) L(r,\omega)\|_{L^2_{r, \omega}} \les \|V\|_{\dot B^{\frac12}}\les \|V\|_{B^{\frac12}}.
\end{aligned}\ee
Moreover,
if   $0 < \alpha < 1$, then
\be\lb{2.81}
\sum_{k \in \Z} 2^{\alpha k} \|\one_{[2^k, 2^{k+1}]}(|r|) L(r,\omega)\|_{L^2_{r, \omega}} \les \|V\|_{\dot B^\alpha}.
\ee
\end{prop}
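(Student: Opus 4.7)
The three bounds form a chain of increasing strength. The $L^2$ bound \eqref{eq:L2V} comes directly from 1D Plancherel: for each fixed $\omega$, $L(\cdot,\omega)$ is (up to a rescaling) the Fourier transform of $s\mapsto s\,\widehat V(-s\omega)\mathbf 1_{s>0}$, so
\[
\|L(\cdot,\omega)\|_{L^2_r}^2 \simeq \int_0^\infty s^2\,|\widehat V(-s\omega)|^2\,ds,
\]
and integrating in $\omega$ via polar coordinates $\xi=-s\omega$ yields $\|\widehat V\|_{L^2}^2\simeq\|V\|_{L^2}^2$. The first inequality in \eqref{est_b} is Cauchy--Schwarz in $(r,\omega)$ on the set $\{2^k\le|r|\le 2^{k+1}\}\times\Sph^2$, whose volume is $\sim 2^k$. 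Its second inequality is the $\alpha=\tfrac12$ case of \eqref{2.81}, so the heart of the matter is \eqref{2.81}.

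For \eqref{2.81}, I would perform a dyadic decomposition $V=\sum_{j\in\Z}V_j$ with $V_j:=\one_{A_j}V$, set $L_j(r,\omega):=\int_0^\infty \widehat{V_j}(-s\omega)e^{irs/2}s\,ds$, and prove the key shell estimate
\[
\bigl\|\one_{[2^k\le|r|\le 2^{k+1}]}L_j\bigr\|_{L^2_{r,\omega}} \;\lesssim\; \min\bigl(1,\,2^{j-k}\bigr)\,\|V_j\|_{L^2}.
\]
Granting this, the triangle inequality followed by interchange of summation reduces \eqref{2.81} to checking that, for each fixed $j$,
\[
\sum_{k\in\Z} 2^{\alpha k}\min(1,2^{j-k}) \;\simeq\; 2^{\alpha j},
\]
which holds since $0<\alpha<1$ makes both tails ($k\le j$ with $2^{\alpha k}$ and $k>j$ with $2^{(\alpha-1)k}\cdot 2^{j}$) into convergent geometric series peaking near $k=j$. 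Summing in $j$ then produces $\|V\|_{\dot B^{\alpha}}$.

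The ``$\lesssim 1$" branch of the min is precisely the argument for \eqref{eq:L2V} applied to $V_j$. The ``$\lesssim 2^{j-k}$" branch reduces, via $|r|^{-1}\simeq 2^{-k}$ on the relevant shell, to the single estimate
\[
\|r\,L_j\|_{L^2_{r,\omega}} \;\lesssim\; 2^{j}\,\|V_j\|_{L^2}.
\]
I would prove this by Plancherel in $r$: writing $L_j(\cdot,\omega)$ as the 1D Fourier transform of $g_{j,\omega}(s):=s\,\widehat{V_j}(-s\omega)\one_{s>0}$, multiplication by $r$ corresponds to $i\partial_s$, and the distributional derivative is
\[
\partial_s g_{j,\omega}(s) \;=\; \bigl[\widehat{V_j}(-s\omega) - s\,\omega\cdot\nabla\widehat{V_j}(-s\omega)\bigr]\one_{s>0},
\]
the boundary $\delta_0$-contribution at $s=0$ being killed by the prefactor $s$. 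Plancherel plus polar coordinates then gives
\[
\|r L_j\|_{L^2_{r,\omega}}^2 \;\lesssim\; \int_{\R^3}\frac{|\widehat{V_j}(\xi)|^2}{|\xi|^2}\,d\xi + \int_{\R^3}|\nabla\widehat{V_j}(\xi)|^2\,d\xi \;=\; \bigl\||\nabla|^{-1}V_j\bigr\|_{L^2}^2 + \|x\,V_j\|_{L^2}^2.
\]
The second term is bounded by $2^{j+1}\|V_j\|_{L^2}$ since $V_j$ is supported in $\{|x|\le 2^{j+1}\}$. The first term I would bound by Hardy--Littlewood--Sobolev and H\"older:
\[
\bigl\||\nabla|^{-1}V_j\bigr\|_{L^2} \;\lesssim\; \|V_j\|_{L^{6/5}} \;\lesssim\; |A_j|^{1/3}\|V_j\|_{L^2} \;\simeq\; 2^{j}\|V_j\|_{L^2}.
\]

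The main obstacle is the low-frequency piece $\||\nabla|^{-1}V_j\|_{L^2}$, which is what forces one to use the support localization of the dyadic piece $V_j$ in order to trade a power of $|\xi|^{-1}$ near the origin for a factor of $2^j$; without this, Plancherel alone is insufficient. Everything else (the ``$r$-derivative'' step, the geometric summation, and the passage from Schwartz $V$ to the general dyadic decomposition) is routine once this bound is in hand.
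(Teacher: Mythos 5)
Your proof is correct, but it takes a genuinely different route from the paper. The paper proves the two endpoint bounds $\|L\|_{L^2_{r,\omega}}\lesssim\|V\|_{L^2}$ and $\|rL\|_{L^2_{r,\omega}}\lesssim\|xV\|_{L^2}$ \emph{globally} (the latter by the same integration by parts you use, followed by Hardy's inequality to control $\| |\xi|^{-1}\widehat V\|_{L^2}$ by $\|\nabla\widehat V\|_{L^2}$), encodes them as boundedness of the map $V\mapsto\{L^j\}_j$ from $L^2$ to $\dot\ell^0_2(L^2)$ and from $|x|^{-1}L^2$ to $\dot\ell^1_2(L^2)$, and then invokes the real interpolation identities of Lemma~\ref{lem:Balpha interpol} and \eqref{eq:vec int} to pass to the $\dot\ell^\alpha_1$ and $\dot B^\alpha$ scales. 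You instead decompose $V=\sum_j V_j$ into dyadic shells, prove the scale-by-scale bound $\| r L_j\|_{L^2_{r,\omega}}\lesssim 2^j\|V_j\|_{L^2}$ by exploiting the support of $V_j$ (via HLS and H\"older to control $\||\nabla|^{-1}V_j\|_2$; an application of Hardy's inequality applied to $\widehat{V_j}$ together with $\|xV_j\|_2\lesssim 2^j\|V_j\|_2$ would do equally well), combine this with the unweighted $L^2$ bound to get the off-diagonal gain $\min(1,2^{j-k})$, and close by a Schur-type summation using $0<\alpha<1$. This is in effect a hands-on, ``retraction/coretraction'' realization of the same interpolation; it is more elementary and self-contained (no abstract interpolation theorems), at the price of being somewhat longer and needing the explicit almost-orthogonality bookkeeping. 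Both arguments hinge on the identical Plancherel computations, and both use the localization of the dyadic pieces to tame the $|\xi|^{-1}$ singularity; the difference is whether that localization is done by hand (your approach) or delegated to the identification $\dot B^{2\alpha}=(L^2,|x|^{-2}L^2)_{\alpha,1}$ (the paper's).
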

\begin{proof} By Plancherel's identity,
$$
\int_\R |L(r, \omega)|^2 \dd t \les \int |\widehat V(s\omega)|^2 s^2  \dd s.
$$
Integrating over $\omega \in \Sph^2 $  yields
\be\lb{2.84}
\|L\|_{L^2_{r, \omega}}^2 = \int_{\Sph^2 } \int_\R |L(r, \omega)|^2 \dd r \dd \omega \les \|\widehat V\|_{L^2}^2 = C \|V\|_{L^2}^2.
\ee
Integrating  by parts one obtains
$$
\frac {ir} 2 L(r, \omega) = -\int_0^{\infty} \partial_s (\widehat V(-s\omega) s) e^{irs/2} \dd s,
$$
whence again by Plancherel, 
\be\lb{2.85}
\|r L\|_{L^2_{r, \omega}}^2 = \int_{\Sph^2 } \int_\R |r L(r, \omega)|^2 \dd r \dd \omega \les \|\dl \widehat V\|_2^2 + \||\xi|^{-1} \widehat V\|_{L^2_\xi}^2 \les \|V\|_{|x|^{-1} L^2}^2.
\ee
When $V \in \dot B^{\frac12}$ we now prove (\ref{est_b}) by applying the real interpolation method, see Section~\ref{sec:Lpq}. Begin by partitioning $L$ into dyadic pieces
$$
L^{j}(r, \omega) = L(r, \omega) (\chi(2^{-j-1} |r|) - \chi(2^{-j+1} |r|)), \quad\calL :=\{ L^{j}\}_{j\in\Z}
$$
where $\chi$ is  a cutoff function such that $\chi(s)=1$ when $s \leq 1$ and $\chi(s)=0$ when $s\geq 2$. 
When $V \in L^2$, we can then rewrite (\ref{2.84}) in the form (using the vector spaces from \eqref{eq:ellsq}) 
$$
\calL (r, \omega) \in \dot\ell^0_2(L^2_{r, \omega}),\ \|\calL(r, \omega)\|_{\dot\ell^0_2(L^2_{r, \omega})} \les \|V\|_2.
$$
Likewise, (\ref{2.85}) becomes
$$
\calL(r, \omega) \in   \dot\ell^{1}_2(L^2_{r, \omega}),\ \|\calL(r, \omega)\|_{  \dot\ell^1_2(L^2_{r, \omega})} \les \|V\|_{|x|^{-1} L^2}  
$$
By real interpolation, see Lemma~\ref{lem:Balpha interpol} and \eqref{eq:vec int}, 
$$
\|\calL(r, \omega)\|_{\dot \ell^{\alpha}_1(L^2_{r, \omega})} \les \|V\|_{\dot B^{\alpha}},
$$
for any $0<\alpha<1$. In particular, 
$$
\|L\|_{L^1_{r, \omega}} \les \|\calL(r,\omega)\|_{\dot \ell^0_1(L^1_{r, \omega})} \les\|\calL(r,\omega)\|_{  \dot\ell^{\frac12}_2(L^2_{r, \omega})} \les \|V\|_{\dot B^{\frac12}}.
$$
This establishes \eqref{est_b}, \eqref{2.81} in the range $0<\alpha<1$. 
\end{proof}

As an immediate corollary we obtain via \eqref{eq:g1def}, \eqref{eq:g1sup} that 
\EQ{\lb{eq:g1measV}
 \int_{\Sph^2 }\int_{\R^3} \| g_1(x,dy,\omega)\|_{L^\infty_x}  \, d\omega \les \|V\|_{B^{\frac12}}
}
from which we deduce via Corollary~\ref{cor:K1+} that 
\[
\| W_{1+}f\|_p \le C \|V\|_{B^{\frac12}} \|f\|_p
\]
In order to bound other terms $W_{n+}$ as well as the full wave operator we will rely on a certain function algebra framework that is presented in Section~\ref{sec:FA}. 
This function algebra formalism will need to respect the composition law of Definition~\ref{def:comp}. 
To motivate the definitions of the function spaces in the following section,  we now establish some estimates on the kernel $K_{1+}^\eps$.  
Recall from \eqref{eq:K1+T} that 
\EQ{
K_{1+}^\eps (x,y) &=   -\mc F_{x_{0}}^{-1} T_{1+}^\eps (0,x,y)
}

\begin{lemma}
\label{lem:K1}
Let $V$ be a Schwartz function. Then for any $0\le\sigma$, and uniformly in $\eps>0$, 
\begin{align}
\| K_{1+}^\eps (x,y)\|_{L^\infty_x L^1_y} & \les \| V\|_{B^{\frac12}} \lb{eq:Kest1}  \\
\| v(x) K_{1+}^\eps (x,y)\|_{ L^1_y B^{\sigma}_x} & \les \| v\|_{B^{\frac12+\sigma}} \| V\|_{B^{\frac12}}     \lb{eq:Kest2} 
\end{align}
for any $v\in B^{\frac12+\sigma}$.  With $f$ a Schwartz function, define a kernel 
\EQ{
\wtil K_{1+}^\eps (x,y) &=  \int_{\R^3} f(x_0)   T_{1+}^\eps (x_0,x,y)\, dx_0
}
with the integral being understood as distributional duality pairing. 
Then  uniformly in $\eps>0$,
\begin{align}
\| \wtil K_{1+}^\eps (x,y)\|_{L^\infty_x L^1_y} & \les \| fV\|_{B^{\frac12}} \lb{eq:Kest1f}  \\
\| v(x) \wtil K_{1+}^\eps (x,y)\|_{ L^1_y B^{\sigma}_x} & \les \| v\|_{B^{\frac12+\sigma}} \| fV\|_{B^{\frac12}}     \lb{eq:Kest2f} 
\end{align}
for any $v\in B^{\frac12+\sigma}$. 
\end{lemma}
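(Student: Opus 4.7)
The starting point is the spatial representation of $K_{1+}^\eps$ from Section~\ref{sec:Born1}:
\[
K_{1+}^\eps(x, z) \;=\; \const\, |z|^{-2}\, L_\eps\bigl(|z| - 2 x\cdot\hat z,\, \hat z\bigr),\qquad \hat z = z/|z|,
\]
reformulating \eqref{eq:K1V}. The bounds of Proposition~\ref{lemma2.9} on $L$ extend to $L_\eps$ uniformly in $\eps \ge 0$, because the extra exponential factor $e^{-\eps|z|/(2s)}$ has modulus at most $1$ and produces a uniformly bounded term under integration by parts (its logarithmic derivative $\eps|z|/(2s^2)$ is offset by the exponential).  It therefore suffices to argue at $\eps=0$, and uniformity in $\eps$ comes for free.

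For \eqref{eq:Kest1}, polar coordinates $z = r\omega$ with $dz = r^2\, dr\, d\omega$ cancel the $|z|^{-2}$, and the change of variable $s = r - 2x\cdot\omega$ (extended to all of $\R$) yields
\[
\int_{\R^3}|K_{1+}(x, z)|\, dz \;\le\; C\int_{\Sph^2}\int_\R |L(s, \omega)|\, ds\, d\omega \;\les\; \|V\|_{B^{1/2}}
\]
uniformly in $x$, by \eqref{est_b}.

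For \eqref{eq:Kest2}, I would dyadically decompose $v = v_{-1}+\sum_{j\ge 0}v_j$ with $v_{-1}=\one_{|x|<1}v$ and $v_j = \one_{A_j}v$, and by the triangle inequality on $\|\cdot\|_{B^\sigma_x}$ reduce matters to the dyadic estimate
\[
\|v_j\, K_{1+}^\eps\|_{L^1_y L^2_x}\;\les\; 2^{j/2}\,\|v_j\|_{L^2}\,\|V\|_{B^{1/2}}\qquad(j\ge -1),
\]
which upon summation against $2^{j\sigma}$ gives exactly $\|v\|_{B^{1/2+\sigma}}\|V\|_{B^{1/2}}$. Splitting $x = (x_\omega, x^\perp)$ with $x_\omega = x\cdot\omega$ and setting $a_j^\omega(x_\omega) := \int |v_j(x_\omega, x^\perp)|^2 dx^\perp$, the inner $L^2_x$-square factors as a radial convolution:
\[
\|v_j\, L(r - 2x\cdot\omega,\omega)\|_{L^2_x}^2 \;=\; \bigl(b_j^\omega * |L(\cdot,\omega)|^2\bigr)(r),
\]
where $b_j^\omega(u) := a_j^\omega(u/2)/2$ is supported in $|u|\le 2^{j+2}$ with $\|b_j^\omega\|_{L^1}=\|v_j\|_{L^2}^2$. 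I would then split $\int_0^\infty (\cdots)^{1/2} dr$ at $r=2^{j+5}$. On the near region $0< r\le 2^{j+5}$, Cauchy-Schwarz in $r$ extracts the factor $2^{j/2}$ and Young's convolution inequality bounds $\|b_j^\omega *|L|^2\|_{L^1_r}^{1/2}$ by $\|v_j\|_{L^2}\|L_\omega\|_{L^2_s}$; integrating in $\omega$ and invoking \eqref{eq:L2V} produces the $2^{j/2}\|v_j\|_{L^2}\|V\|_{L^2}$ contribution. On the far region $r > 2^{j+5}$, the support of $b_j^\omega$ is small relative to $r$, so for $r\in[2^k,2^{k+1}]$ with $k\ge j+5$ the convolution is localized to $|s|\sim 2^k$ in $L_\omega$; Cauchy-Schwarz in $r$ produces a factor $2^{k/2}$, and summing in $k$ yields $\les\|v_j\|_{L^2}\sum_k 2^{k/2}\|\one_{|s|\sim 2^k} L\|_{L^2_{s,\omega}}\les \|v_j\|_{L^2}\|V\|_{B^{1/2}}$ via \eqref{2.81}.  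Combining both regions produces the required dyadic estimate.

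For the $\wtil K_{1+}^\eps$ statements \eqref{eq:Kest1f}--\eqref{eq:Kest2f}, I would observe that pairing $T_{1+}^\eps$ against $f(x_0)$ in the $x_0$-slot corresponds, via \eqref{2.16}, to replacing $\widehat V(\xi)$ on the Fourier side by
\[
\int \widehat V(\xi - \xi_0)\, \widehat f(\xi_0)\, d\xi_0 \;=\; (\widehat V * \widehat f)(\xi) \;=\; (2\pi)^3\,\widehat{fV}(\xi).
\]
Consequently $\wtil K_{1+}^\eps$ admits the same spatial representation as $K_{1+}^\eps$ with $V$ replaced by $fV$, and \eqref{eq:Kest1f}--\eqref{eq:Kest2f} follow from the arguments for \eqref{eq:Kest1}--\eqref{eq:Kest2} upon substituting $\|fV\|_{B^{1/2}}$ for $\|V\|_{B^{1/2}}$.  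The main technical obstacle is the convolution analysis of step three: one must balance the near/far decomposition of the $r$-integration carefully so as to recover the precise $2^{j/2}$ gain that matches the $B^{1/2+\sigma}$ norm on the weight~$v$.
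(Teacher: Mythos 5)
Your argument is correct and follows essentially the same route as the paper: the polar-coordinate representation $K_{1+}(x,z)=\const\,|z|^{-2}L(|z|-2x\cdot\hat z,\hat z)$, a dyadic decomposition of $v$ in $x$, a near/far split of the $r$-integration matched against the dyadic scale of $v_j$, Cauchy--Schwarz to extract the $2^{j/2}$ (resp.\ $2^{k/2}$) factors, and Proposition~\ref{lemma2.9} together with~\eqref{2.81} to close; the reduction of $\wtil K_{1+}^\eps$ to the case of the potential $fV$ via the Fourier-side identity is exactly the paper's~\eqref{eq:fV1}. The only cosmetic difference is that you phrase the inner $L^2_x$ computation as the radial convolution $b_j^\omega\ast|L_\omega|^2$, which is a clean repackaging of the same calculation.
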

\begin{proof} 
It suffices to give the proof for the limit $\eps=0$ since for $\eps>0$ an additional exponential decay factor arises, cf.~\eqref{eq:L} and~\eqref{eq:Leps}.
So all estimates below cover the case $\eps>0$ as well.  
From \eqref{eq:KL} one has 
\EQ{
\| K_{1+}  (x,y)\|_{L^\infty_x L^1_y} &= \const \int_{-\infty}^\infty \int_{\Sph^2} |L(r,\omega)|\, drd\omega \les \|V\|_{B^\frac12} 
}
as we already noted above, cf.~\eqref{eq:g1measV}.   For the second estimate \eqref{eq:Kest2} we proceed as follows: 
\begin{align}
  \| v(x) K_{1+}  (x,y)\|_{ L^1_y B^{\sigma}_x} & = \const \int_{\R^3} \Big \| v(x) |u|^{-2} L(|u|-2\hat u \cdot x, \hat u)  \Big \|_{   B^{\sigma}_x}  \, du \nn \\ 
&= \const \int_{\Sph^2} \int_0^\infty  \| v(x)   L(r-2\omega \cdot x, \omega)  \Big \|_{   B^{\sigma}_x}  \, dr d\omega  \label{eq:vK1} \\
&\les  \int_{\Sph^2} \int_0^\infty   \Big \| \one_{[|x|\les 1]} v(x)   L(r-2\omega \cdot x, \omega)  \Big  \|_{ L^2_x} \, dr d\omega  + \nn \\
& \quad +\sum_{\ell=0}^\infty 2^{\sigma\ell} \int_{\Sph^2} \int_0^\infty   \Big \| \one_{[|x|\simeq 2^\ell]} v(x)   L(r-2\omega \cdot x, \omega)  \Big  \|_{ L^2_x} \, dr d\omega   \nn
\end{align}
The term involving $\one_{[|x|\les 1]}$ is estimated as follows: 
\EQ{\nonumber
& \int_{\Sph^2} \int_0^\infty   \Big \| \one_{[|x|\les 1]} v(x)   L(r-2\omega \cdot x, \omega)  \Big  \|_{ L^2_x} \, dr d\omega  \\
& \les  \Big( \int_{\Sph^2} \int_0^1  \Big \| \one_{[|x|\les 1]} v(x)   L(r-2\omega \cdot x, \omega)  \Big  \|_{ L^2_x}^2 \, dr d\omega\Big)^{\frac12} + \\
&\qquad + \sum_{k=0}^\infty 2^{\frac{k}{2}} \Big( \int_{\Sph^2} \int_{2^k\le r\le 2^{k+1}}  \Big \| \one_{[|x|\les 1]} v(x)   L(r-2\omega \cdot x, \omega)  \Big  \|_{ L^2_x}^2 \, dr d\omega\Big)^{\frac12}  \\
&\les \big \| \one_{[|x|\les 1]} v(x)  \big\|_{2} \sum_{k=0}^\infty 2^{\frac{k}{2}}  \Big( \int_{\Sph^2} \int_{2^k-1\le |r| \le 2^{k+2}} |   L(r , \omega)  |^2 \, dr d\omega\Big)^{\frac12} \les  \|v\|_2 \|V\|_{B^{\frac12}}
}
by Proposition~\ref{lemma2.9}.  Next, 
\EQ{\nn
& \sum_{\ell=0}^\infty 2^{\sigma\ell} \int_{\Sph^2} \int_0^\infty   \Big \| \one_{[|x|\simeq 2^\ell]} v(x)   L(r-2\omega \cdot x, \omega)  \Big  \|_{ L^2_x} \, dr d\omega   \\
& \les \sum_{\ell=0}^\infty 2^{\sigma\ell} \sum_{k\ge \ell + 10} 2^{\frac{k}{2}}\Big(  \int_{\Sph^2} \int_{[r\simeq 2^k]} \Big \| \one_{[|x|\simeq 2^\ell]} v(x)   L(r-2\omega \cdot x, \omega)  \Big  \|_{ L^2_x}^2 \, dr d\omega   \Big)^{\frac12}   \\
&\quad + \sum_{\ell=0}^\infty   2^{\ell(\frac12+\sigma)}\Big(  \int_{\Sph^2} \int_{[0 <r\les 2^\ell]} \Big \| \one_{[|x|\simeq 2^\ell]} v(x)   L(r-2\omega \cdot x, \omega)  \Big  \|_{ L^2_x}^2 \, dr d\omega   \Big)^{\frac12}    
}
In the first sum on the right-hand side $r$ dominates, but for the second it does not. Hence, we continue to bound these terms as follows: 
\EQ{
&\les   \sum_{\ell=0}^\infty 2^{\sigma\ell} \| \one_{[|x|\simeq 2^\ell]} v(x)\|_2 \sum_{k\ge \ell + 10} 2^{\frac{k}{2}}\Big(  \int_{\Sph^2} \int_{[r\simeq 2^k]} \big |    L(r, \omega)  |^2 \, dr d\omega   \Big)^{\frac12}   \\
&\qquad + \sum_{\ell=0}^\infty   2^{\ell(\frac12+\sigma)}   \| \one_{[|x|\simeq 2^\ell]} v(x)   \|_2 \Big(  \int_{\Sph^2} \int_{[|r| \les 2^\ell]}    | L(r, \omega)  |^2 \, dr d\omega   \Big)^{\frac12}  \\
&\les \|v\|_{B^{\sigma+\frac12}} \|V\|_{B^{\frac12}}
}
as claimed.  Finally, we have the relation
\EQ{\label{eq:fV1}
\mc F_{x,y}\wtil K_{1+}^\eps (\xi_1,\eta) &=  \int_{\R^3} \hat{f}(\xi_0) \mc F^{-1}_{x_0} \mc F_{x,y}T_{1+}^\eps (\xi_0,\xi_1,\eta)\, d\xi_0 \\
&= \int_{\R^3} \frac{ \hat{f}(\xi_0) \what{V}(\xi_1-\xi_0)}{|\xi_1+\eta|^2-|\eta|^2+i\eps}\, d\xi_0 = \frac{ \what{fV}(\xi_1)}{|\xi_1+\eta|^2-|\eta|^2+i\eps}
}
In view of \eqref{2.16}, this corresponds to  the kernel $K_{1+}^\eps$ associated with the potential $fV$. So the previous estimates yield \eqref{eq:Kest1f}, \eqref{eq:Kest2f}. 
\end{proof}

Note that the proof of \eqref{eq:Kest2f} suffers a loss of a half power in the sense that $\|v\|_{B^1}$ appears on the right-hand side instead of $\|v\|_{B^\frac12}$.  
However, since an estimate of the form 
$
\| K_{1+}^\eps (x,y)\|_{L^1_yL^\infty_x }  \les \| V\|_{B^{\frac12}}
$
 is false, removing such a loss in the context of the $L^{2}$-based theory seems  delicate.  
  
 \smallskip
 
 In Section~\ref{sec:Wiener} we will make use of the following technical variant of \eqref{eq:Kest2}. While we only need the case $\gamma_1=\gamma_2$, we choose this more
 general formulation to illustrate the distribution of the different weights. 
 
 \begin{lemma}
\label{lem:K1weighted}
Let $V$ be a Schwartz function, and $0\le\gamma_1<\frac12$, $\gamma_2\ge0$, and set $\sigma=\frac12+\gamma_2$. Then uniformly in $\eps>0$, 
\begin{align}
\int_{\R^{3}} \la y\ra^{\gamma_1}\| v(x) K_{1+}^\eps (x,y)\|_{B^{\sigma}_x} \, dy & \les \| v\|_{B^{\frac12+\sigma+\gamma_1 }} \| V\|_{B^{\frac12+\gamma_1}}     \lb{eq:Kest2weighted} 
\end{align}
for any $v\in B^{\frac12+\sigma+\gamma_1 }$.  
\end{lemma}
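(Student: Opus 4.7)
The plan is to adapt the proof of \eqref{eq:Kest2}, inserting the weight $\langle y\rangle^{\gamma_1}$ and tracking how it splits between the two regimes that arise from the dyadic decomposition. As before, I reduce to $\eps=0$ since the factor $e^{-\eps|z|/(2s)}$ in \eqref{eq:Leps} only provides pointwise decay. Using the explicit formula \eqref{eq:KL} and switching to spherical coordinates $y=r\omega$, $r>0$, the claim reduces to bounding
\begin{equation*}
I \;:=\; \int_{\Sph^2}\!\int_0^\infty \langle r\rangle^{\gamma_1}\, \big\| v(x)\, L(r-2\omega\cdot x,\omega)\big\|_{B^\sigma_x}\, dr\, d\omega
\end{equation*}
by $\|v\|_{B^{1/2+\sigma+\gamma_1}}\|V\|_{B^{1/2+\gamma_1}}$.

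I would decompose the $B^\sigma_x$-norm dyadically in $|x|$ (into $|x|\lesssim 1$ and $|x|\simeq 2^\ell$, $\ell\ge 0$, with weight $2^{\sigma\ell}$), and then, for each annular piece, further split the $r$-integral at the threshold $r\simeq 2^\ell$. This produces three contributions: a central term $|x|\lesssim 1$, a ``$r$ dominates'' term $r\simeq 2^k\gg 2^\ell$, and an ``$|x|$ dominates'' term $0<r\lesssim 2^\ell$. The splitting is essential because it controls whether $|r-2\omega\cdot x|$ is comparable to $r$ or can range down to zero.

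For the ``$r$ dominates'' piece, I estimate $\langle r\rangle^{\gamma_1}\simeq 2^{k\gamma_1}$, apply Cauchy--Schwarz in $(r,\omega)$ on a set of volume $\simeq 2^k$ (giving a factor $2^{k/2}$), then use Tonelli and the substitution $r\mapsto r-2\omega\cdot x$, whose effect on $\{r\simeq 2^k\}$ is a negligible shift of size $O(2^\ell)\ll 2^k$. Introducing
\begin{equation*}
b_k \;:=\; \Big(\int_{\Sph^2}\!\int_{|r|\simeq 2^k}|L(r,\omega)|^2\,dr\,d\omega\Big)^{1/2},
\end{equation*}
this contribution is controlled by $\sum_\ell 2^{\sigma\ell}\|\one_{[|x|\simeq 2^\ell]}v\|_2\cdot\sum_{k\ge \ell+10} 2^{k(1/2+\gamma_1)}b_k$, and invoking \eqref{2.81} of Proposition~\ref{lemma2.9} with $\alpha=\tfrac12+\gamma_1$ bounds the $k$-sum by $\|V\|_{B^{1/2+\gamma_1}}$ and the $\ell$-sum by $\|v\|_{B^\sigma}\le\|v\|_{B^{1/2+\sigma+\gamma_1}}$. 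The central piece $|x|\lesssim 1$ is treated in the same way, without the constraint $k\ge\ell+10$.

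For the ``$|x|$ dominates'' piece I use $\langle r\rangle^{\gamma_1}\lesssim 2^{\ell\gamma_1}$ and Cauchy--Schwarz in $(r,\omega)$ against a region of volume $\simeq 2^\ell$ (factor $2^{\ell/2}$); after Tonelli and the change of variables in $r$, the inner integral is bounded crudely by $\|L\|_{L^2_{r,\omega}}\lesssim\|V\|_2\le\|V\|_{B^{1/2+\gamma_1}}$ via \eqref{eq:L2V}. The accumulated weight $2^{\ell(\sigma+\gamma_1+1/2)}$ reconstructs exactly $\|v\|_{B^{1/2+\sigma+\gamma_1}}$, as desired. The main obstacle is the hypothesis $\gamma_1<\tfrac12$: it ensures that $\alpha=\tfrac12+\gamma_1$ falls in $(0,1)$, which is precisely the range in which \eqref{2.81} yields the summability $\sum_k 2^{k(1/2+\gamma_1)}b_k\lesssim\|V\|_{B^{1/2+\gamma_1}}$; at or above the interpolation endpoint one would need a substitute for Proposition~\ref{lemma2.9}. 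The secondary issue is bookkeeping the weights so that the exponents $\sigma+\gamma_1+\tfrac12 = 1+\gamma_1+\gamma_2$ line up on the two sides, which is what dictates the choice of the threshold $r\sim |x|$ in the $r$-split.
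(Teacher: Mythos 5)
Your proof is correct and follows essentially the same route as the paper: the same dyadic decomposition in $|x|$, the same split of the $r$-integral at the threshold $r\simeq 2^\ell$, the same Cauchy--Schwarz step to reduce to $L^2_{r,\omega}$ dyadic blocks, and the same invocation of Proposition~\ref{lemma2.9} (in particular \eqref{2.81} with $\alpha=\tfrac12+\gamma_1$) to sum the $r$-dominating piece. Your remark that $\gamma_1<\tfrac12$ is exactly what keeps $\alpha=\tfrac12+\gamma_1$ inside the interpolation window $(0,1)$ correctly identifies the role of that hypothesis.
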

\begin{proof}
The proof is a variant of  the one for the previous lemma. First, 
\begin{align*}
& \int_{\R^{3}} \la y\ra^{\gamma_1}  \| v(x) K_{1+}  (x,y)\|_{B^{\frac12+\gamma_2}_x} \, dy \\
 & = \const \int_{\R^3} \la u\ra^{\gamma_1} \Big \| v(x) |u|^{-2} L(|u|-2\hat u \cdot x, \hat u)  \Big \|_{   B^{\frac12+\gamma_2}_x}  \, du \nn \\ 
&= \const \int_{\Sph^2} \int_0^\infty \la r\ra^{\gamma_1}  \| v(x)   L(r-2\omega \cdot x, \omega)  \Big \|_{   B^{\frac12+\gamma_2}_x}  \, dr d\omega   \\
&\les  \int_{\Sph^2} \int_0^\infty   \Big \| \one_{[|x|\les 1]} v(x)   L(r-2\omega \cdot x, \omega)  \Big  \|_{ L^2_x} \la r\ra^{\gamma_1}\, dr d\omega  + \nn \\
& \quad +\sum_{\ell=0}^\infty 2^{\ell(\frac{1}{2}+\gamma_2)} \int_{\Sph^2} \int_0^\infty   \Big \| \one_{[|x|\simeq 2^\ell]} v(x)   L(r-2\omega \cdot x, \omega) 
 \Big  \|_{ L^2_x}\la r\ra^{\gamma_1} \, dr d\omega   \nn
\end{align*}
The term involving $\one_{[|x|\les 1]}$ is estimated as follows: 
\EQ{\nonumber
& \int_{\Sph^2} \int_0^\infty   \Big \| \one_{[|x|\les 1]} v(x)   L(r-2\omega \cdot x, \omega)  \Big  \|_{ L^2_x} \la r\ra^{\gamma_1}\, dr d\omega  \\
& \les    \Big( \int_{\Sph^2} \int_0^1  \Big \| \one_{[|x|\les 1]} v(x)   L(r-2\omega \cdot x, \omega)  \Big  \|_{ L^2_x}^2 \la r\ra^{\gamma_1}\, dr d\omega\Big)^{\frac12} + \\
&\qquad + \sum_{k=0}^\infty 2^{{k}(\frac12+\gamma_1)} \Big( \int_{\Sph^2} \int_{2^k\le r\le 2^{k+1}}  \Big \| \one_{[|x|\les 1]} v(x)   L(r-2\omega \cdot x, \omega)  \Big  \|_{ L^2_x}^2 \, dr d\omega\Big)^{\frac12}  \\
&\les \big \| \one_{[|x|\les 1]} v(x)  \big\|_{2} \sum_{k=0}^\infty 2^{k(\frac12+\gamma_1)}  \Big( \int_{\Sph^2} \int_{2^k-1\le |r| \le 2^{k+2}}  
|   L(r , \omega) |^2 \, dr d\omega\Big)^{\frac12} \les  \|v\|_2 \|V\|_{B^{\frac12+\gamma_1}}
}
by Proposition~\ref{lemma2.9}.  Next, 
\EQ{\nn 
& \sum_{\ell=0}^\infty 2^{\ell(\frac{1}{2}+\gamma_2)} \int_{\Sph^2} \int_0^\infty   \Big \| \one_{[|x|\simeq 2^\ell]} v(x)   L(r-2\omega \cdot x, \omega)  \Big  \|_{ L^2_x}\la r\ra^{\gamma_1} \, dr d\omega   \\
& \les \sum_{\ell=0}^\infty 2^{\ell(\frac{1}{2}+\gamma_2)} \sum_{k\ge \ell + 10} 2^{k(\frac12+\gamma_1)}\Big(  \int_{\Sph^2} \int_{[r\simeq 2^k]} \Big \| \one_{[|x|\simeq 2^\ell]} v(x)   L(r-2\omega \cdot x, \omega)  \Big  \|_{ L^2_x}^2  \, dr d\omega   \Big)^{\frac12}   \\
&\quad + \sum_{\ell=0}^\infty   2^{\ell(1+\gamma_1+\gamma_2)}\Big(  \int_{\Sph^2} \int_{[0 <r\les 2^\ell]} \Big \| \one_{[|x|\simeq 2^\ell]} v(x)   L(r-2\omega \cdot x, \omega)  \Big  \|_{ L^2_x}^2  \, dr d\omega   \Big)^{\frac12}    \\
&\les   \sum_{\ell=0}^\infty 2^{\ell(\frac{1}{2}+\gamma_2)} \| \one_{[|x|\simeq 2^\ell]} v(x)\|_2 \sum_{k\ge \ell + 10} 2^{k(\frac12+\gamma_1)}\Big(  \int_{\Sph^2} \int_{[r\simeq 2^k]} \big |    L(r, \omega)  |^2 \, dr d\omega   \Big)^{\frac12}   \\
&\qquad + \sum_{\ell=0}^\infty   2^{\ell(1+\gamma_1+\gamma_2)}  \| \one_{[|x|\simeq 2^\ell]} v(x)   \|_2 \Big(  \int_{\Sph^2} \int_{[|r| \les 2^\ell]}    | L(r, \omega)  |^2 \, dr d\omega   \Big)^{\frac12}  \\
&\les \|v\|_{B^{1+\gamma_1+\gamma_2}} \|V\|_{B^{\frac12+\gamma_1}}
}
as claimed.  
\end{proof} 

The important feature of \eqref{eq:Kest2weighted} is that the weights only accumulate on $v$, but not on $V$, which is the internal function in $K_{1+}$.  This is important since the same $B$-norm  
then appears in the integral on the left-hand side as on the right-hand side of~\eqref{eq:Kest2weighted}, by setting $\gamma_1=\gamma_2$.   Without this feature the algebra formalism developed in the next two
sections would be impossible. We also remark that $\gamma_1=\gamma_2>0$ is needed in the Wiener theorem, to  ensure that conditions \eqref{eq:Sass1}, \eqref{eq:Sass2} hold. 
To be specific, $\gamma_1>0$ gives the decay in $y$ needed to guarantee the asymptotic vanishing in \eqref{eq:Sass2}.

 
\section{Function algebras and $W_{n+}^{\eps}$}
\label{sec:FA}

Next, we establish a framework in which $\oast$ as in Definition~\ref{def:comp} is a bounded operation. This is needed in order to express the 
relation between $T_{1+}^\eps$ and $T_+^\eps$.   The following space is very natural in view of the operators $T_{n+}^\eps$ from the previous sections. 

\begin{definition}
\label{def:Z}
We define the Banach space $Z$  of tempered distributions as 
\EQ{
Z := \{T(x_0, x_1, y)\in \calS'(\R^{9}) \mid \;&   \calF_{y} T(x_{0},x_{1},\eta) \in L^{\infty}_{\eta}L^{\infty}_{x_{1}}L^{1}_{x_{0}} \} 
}
with norm 
\EQ{\lb{eq:Znorm}
\| T\|_Z := \sup_{\eta\in\R^3} \| \mc F_{y}T(x_0, x_1, \eta)\|_{{L}^{\infty}_{x_1}L^{1}_{x_{0}}} 
}
where the $\sup$ is the essential supremum.   We adjoin the identity $I$ to $Z$, which corresponds to the kernel $T=\delta_0(y)\delta_0(x_1-x_0)$.   The operation $\oast$ on $T_{1},T_{2}\in Z$ is defined by 
\EQ{\label{eq:FToast}
(T_{1}\oast T_{2} )(x_0, x_2,y) = \mc F_{\eta}^{-1} \Big[ \int_{\R^{3}}\mc F_{y}T_{1}(x_0, x_1, \eta) \mc F_{y}T_{2}(x_1, x_2, \eta)\, dx_{1} \Big](y)
}
\end{definition}

\begin{lemma}\lb{lm2.4} Let
$Z$ is a Banach algebra under $\oast$ with identity~$I$.
If $V \in L^{3/2, 1}$ then $T_{1+}^\eps$ defined by (\ref{2.16}) is in $Z$ and  
$\mc F_y T_{1+}^\eps$ is given by
\be\lb{2.90}\begin{aligned}
\mc F_y T_{1+}^\eps (x_0, x_1, \eta)  
&=  e^{-ix_1 \eta}\,  R_0(|\eta|^2-i\eps )(x_0, x_1)V(x_0)\,  e^{ix_0 \eta}.
\end{aligned}\ee
Moreover, 
\EQ{\label{eq:T1Zbd}
\sup_{{\eps>0}}\|T_{1+}^\eps\|_{Z}\les \|V\|_{L^{3/2, 1}} \les \|V\|_{\dot B^{\frac12}}
}
If, in addition, $0$ energy is regular for $H = -\Delta+V$ in the sense of Definition~\ref{def:0res}, then $T_+^\eps $ also belongs to $Z$ and
\be\lb{2.37}
(I + T_{1+}^\eps ) \oast (I - T_+^\eps ) = (I - T_+^\eps ) \oast (I + T_{1+}^\eps ) = I.
\ee
\end{lemma}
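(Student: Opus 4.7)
The plan is to identify the $Z$-norm with an $L^\infty\to L^\infty$ operator norm and the composition $\oast$ with ordinary integral-kernel composition, so that (a) the Banach-algebra property reduces to operator-norm submultiplicativity, (b) $T_{1+}^\eps\in Z$ follows from a kernel bound on $R_0(|\eta|^2-i\eps)V$, and (c) invertibility of $I+T_{1+}^\eps$ in $Z$ becomes the $L^\infty\to L^\infty$ invertibility of $I+R_0(|\eta|^2-i\eps)V$ provided by Lemma~\ref{lemma2.3} together with the zero-energy hypothesis.

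First, I set $K^T_\eta(x_0,x_1):=\mc F_y T(x_0,x_1,\eta)$, so that $\|T\|_Z=\esssup_\eta \|K^T_\eta\|_{L^\infty_{x_1}L^1_{x_0}}$ equals the operator norm of $g\mapsto \int K^T_\eta(\cdot,x_1)g(x_0)\,dx_0$ from $L^\infty$ to $L^\infty$. The dual formulation~\eqref{eq:FToast} expresses $K^{T_1\oast T_2}_\eta$ as the kernel product $(x_0,x_2)\mapsto \int K^{T_1}_\eta(x_0,x_1)K^{T_2}_\eta(x_1,x_2)\,dx_1$, and Fubini yields $\|T_1\oast T_2\|_Z\le \|T_1\|_Z\|T_2\|_Z$; the formal identity $\delta_0(y)\delta_0(x_0-x_1)$ is adjoined as a unit. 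For $T_{1+}^\eps$, formula~\eqref{2.90} is exactly~\eqref{eq:t1'} of Lemma~\ref{lem:FTe} once one unpacks the kernel convention $(R_0V)(x_0,x_1)=R_0(|\eta|^2-i\eps)(x_0,x_1)\,V(x_0)$ used in that proof and commutes the exponential phases. Since $|R_0(|\eta|^2\pm i\eps)(x)|\le (4\pi|x|)^{-1}$, the endpoint Young inequality~\eqref{eq:Y2} with $|\cdot|^{-1}\in L^{3,\infty}$ gives $\|T_{1+}^\eps\|_Z\les \|V\|_{L^{3/2,1}}$ uniformly in $\eta,\eps$, and $\dot B^{1/2}\hookrightarrow L^{3/2,1}$ is Lemma~\ref{lem:Balpha interpol}.

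For the $T_+^\eps$ statement, conjugation by $M_{e^{\pm ix\cdot\eta}}$ makes the operator with symbol $K^{T_{1+}^\eps}_\eta$ unitarily equivalent, as an $L^\infty$-operator, to $R_0(|\eta|^2-i\eps)V$. Hence by Lemma~\ref{lemma2.3} and the zero-energy assumption, $I+K^{T_{1+}^\eps}_\eta$ is invertible on $L^\infty$ with norm $\le M_0$ uniformly in $\eta,\eps$; by the resolvent identity~\eqref{2.29}, $I-(I+R_0V)^{-1}=R_V V$, which is precisely the symbol of $T_+^\eps$ prescribed by~\eqref{eqn2.13}. To realize this as a member of $Z$, I would solve, for each fixed $x_1$, the Fredholm equation
\[
\psi(x_0,x_1)+\int K^{T_{1+}^\eps}_\eta(x_0,x_1')\,\psi(x_1',x_1)\,dx_1'=K^{T_{1+}^\eps}_\eta(x_0,x_1)
\]
in $L^1_{x_0}$: the transposed integral operator is bounded on $L^1$ with norm $\|T_{1+}^\eps\|_Z$, and by the Fredholm alternative (using that $R_0V$ was shown compact on $L^\infty$ in the proof of Lemma~\ref{lemma2.3}) invertibility on $L^\infty$ transfers to invertibility on $L^1$ with comparable norm. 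Setting $K^{T_+^\eps}_\eta:=\psi$ then produces a kernel in $L^\infty_{x_1}L^1_{x_0}$ with $\|T_+^\eps\|_Z\le M_0\|T_{1+}^\eps\|_Z$, and $(I+K^{T_{1+}^\eps}_\eta)(I-K^{T_+^\eps}_\eta)=I$ holds pointwise in $\eta$ by construction; applying $\mc F_y^{-1}$ yields~\eqref{2.37}.

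The main obstacle is this last step: transferring the abstract $L^\infty\to L^\infty$ invertibility from Lemma~\ref{lemma2.3} into membership of $(I+T_{1+}^\eps)^{-1}-I$ in the integral-kernel algebra $Z$, since a generic bounded operator on $L^\infty$ need not have an integrable kernel. The dualization via the Fredholm equation on $L^1_{x_0}$ (powered by compactness of $R_0V$) is what bridges the gap; everything else is algebra or direct estimates.
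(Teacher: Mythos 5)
Your proposal is correct and follows essentially the same path as the paper. The Banach-algebra submultiplicativity and the identification of formula \eqref{2.90} with \eqref{eq:t1'} are the same as in the text; the bound \eqref{eq:T1Zbd} is obtained identically via $|R_0|\le (4\pi|\cdot|)^{-1}\in L^{3,\infty}$ and the Lorentz-space H\"older/Young estimates together with the embedding $\dot B^{1/2}\embed L^{3/2,1}$; and the key input for $T_+^\eps\in Z$ is in both cases Lemma~\ref{lemma2.3} (paper, via Lemma~\ref{lem:0regBd}). Where you add value is that you make explicit how the purely abstract $L^\infty$-invertibility of $I+R_0V$ is translated into an actual kernel with the $L^\infty_{x_1}L^1_{x_0}$ bound required by the definition of $Z$: your Fredholm equation for $\psi(\cdot,x_1)\in L^1_{x_0}$, solved column-by-column, fills a point that the paper treats more implicitly through Lemma~\ref{lem:0regBd}. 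Two small remarks. First, the transfer of invertibility from $L^\infty$ to $L^1$ does not really need the Fredholm alternative or compactness: the operator $\mathcal A=M_\eta^{-1}VR_0 M_\eta$ on $L^1$ has adjoint $M_\eta^{-1}R_0 V M_\eta$ on $L^\infty=(L^1)^*$, so invertibility (and the exact norm bound $\le M_0$) transfers directly by duality. Second, your Fredholm construction produces only the identity $(I+T_{1+}^\eps)\oast(I-T_+^\eps)=I$; you should also address the other factorization $(I-T_+^\eps)\oast(I+T_{1+}^\eps)=I$, which the paper obtains from the second resolvent identity $R_0V-R_VV+R_VVR_0V=0$. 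It can also be gotten from your setup by noting that a left inverse of an invertible operator coincides with the two-sided inverse, so the kernel identity in the other order follows from equality of the corresponding integral operators; but some such argument must be stated.
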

\begin{proof}
$Z$ is clearly a Banach space. The expressions in brackets in \eqref{eq:FToast} satisfies 
\EQ{
&\sup_{\eta\in\R^{3}}\Big \|  \int_{\R^{3}}\mc F_{y}T_{1}(x_0, x_1, \eta) \mc F_{y}T_{2}(x_1, x_2, \eta)\, dx_{1}  \Big\|_{L^{\infty}_{x_{2}}L^{1}_{x_{0}}} \\
&\le \| \mc F_{y}T_{1}\|_{{L}^{\infty}_{\eta}{L}^{\infty}_{x_1}L^{1}_{x_{0}}} \| \mc F_{y}T_{2}\|_{{L}^{\infty}_{\eta}{L}^{\infty}_{x_2}L^{1}_{x_{1}}} 
= \| T_{1}\|_{Z} \|T_{2}\|_{Z}
}
whence it is a tempered distribution in $\R^{9}$. Therefore, the composition~\eqref{eq:FToast} is well-defined in $Z$ and 
\[
\| T_{1}\oast T_{2}\|_{Z}\le \| T_{1}\|_{Z} \|T_{2}\|_{Z}
\]
so $Z$ is a Banach algebra under $\| \cdot\|_{Z}$.

Formula~\eqref{2.90} is the same as~\eqref{eq:t1'}.  That $T_{1+}^{\eps }\in Z$ and   $T_+^\eps \in Z$ under the $0$ energy condition is a restatement of Lemma~\ref{lem:0regBd}. 
The resolvent identity (\ref{2.29}) implies that 
\EQ{\label{eq:RI2}
 R_0( |\eta|^2-i\eps )V - R_V( |\eta|^2-i\eps )V + R_0( |\eta|^2-i\eps )V R_V( |\eta|^2-i\eps )V  =0
}
whence, with $e^{ix \eta}f(x)=: (M_{\eta}f)(x)$,   
\begin{align}\label{eq:wichtig}
& M_{\eta}^{-1}\, R_0( |\eta|^2-i\eps )(x_0,x_1)V(x_0)\,  M_{\eta} - M_{\eta}^{-1} \,  R_V( |\eta|^2-i\eps )(x_0,x_1)V(x_0)\,  M_{\eta}\\
&+ M_{\eta}^{-1}\,  R_0( |\eta|^2-i\eps )(x_2,x_1)V(x_2)\,  M_{\eta}  \circ M_{\eta}^{-1}\, R_V( |\eta|^2-i\eps )(x_0,x_2)V(x_0)\,  M_{\eta}  =0\nn
\end{align}
where $\circ$ signifies integration.  In view of \eqref{2.90} this means
\EQ{
0= T_{1+}^\eps - T_{+}^\eps + T_{1+}^\eps\oast T_{+}^\eps
}
or $(I + T_{1+}^\eps ) \oast (I - T_+^\eps )=I$. The second identity in \eqref{2.37} holds because the resolvent identity also implies \eqref{eq:RI2} with $R_0$ and $R_V$ reversed:
\EQ{\label{eq:RI3}
 R_0( |\eta|^2-i\eps )V - R_V( |\eta|^2-i\eps )V + R_V( |\eta|^2-i\eps )V R_0( |\eta|^2-i\eps )V  =0
}
and so that same argument as before concludes the proof. 
\end{proof}

The space $Z$ by itself is not sufficient to control the wave operators via Wiener's theorem. This requires other spaces, mainly the algebra~$Y$,  to which we now turn.

\begin{definition}\label{def:VB}
Fix  some number $\frac12\le \sigma<1$, and 
a function $v\in B^{\sigma}$ which does not vanish a.e.\ ($B^{\sigma}$ is the space from Definition~\ref{def:Bspace}). 
We  introduce the following structures depending on~$v$: \begin{itemize}
\item the
 seminormed space $v^{-1}B$ is defined as 
$$
v^{-1}B=\{f\ \text{measurable} \mid v(x) f(x) \in B^{\sigma} \}
$$
with the seminorm $\|f\|_{v^{-1} B}:=\|v f\|_{B^{\sigma} }$.   
\item  Set $X_{x,y}:=L^{1}_{y}v^{-1}B_{x}$. 
Let $Y$ be the space of three-variable kernels
\be\lb{2.31}\begin{aligned}
Y &:= \Big\{T(x_0, x_1, y) \in Z \mid \; \forall f \in  L^\infty\\
& (fT)(x_{1}, y):= \int_{\R^3} f(x_0) T(x_0, x_1, y) \dd x_0 \in X_{x_{1},y} \Big\},
\end{aligned}\ee
with norm
\EQ{\lb{eq:Ynorm}
\|T\|_Y &:= \|T\|_Z  + \|T\|_{\B(v^{-1}B_{x_0}, X_{x_{1},y} )} 
}
We   adjoin  an identity element to $Y$, in the form of
\be\lb{ident}
I(x_0, x_1, y) = \delta_{x_0}(x_1) \delta_0(y) = \delta_{x_1}(x_0) \delta_0(y).
\ee
\end{itemize}
\end{definition}

While we keep this definition more general with regard to the function $v$, in our applications below we will set $v=V$, the potential in $H=-\Delta+V$.
Since $v\in B^{\sigma}$, we have 
\EQ{\label{eq:XX0}
L^{\infty}\subset v^{-1}B,\qquad \|f\|_{v^{-1}B}\le \|v\|_{B^{\sigma}}\|f\|_{\infty}.
}  
Moreover, $L^{\infty}$ is dense in $v^{-1}B$. 
The spaces $v^{-1}B$ and $Y$ depend on $\sigma$, but so as not to overload the notation we suppress this dependence. 
Note that the $x_0$-integral in \eqref{2.31} is well-defined for any $f\in L^\infty$ due to   $T\in Z$, and that this integration produces a tempered distribution in the variables $(x_{1},y)$. 
The condition is then that the Schwartz kernel of this distribution satisfies a bound of the form, for all  $f\in L^{\infty}$,  
\EQ{
\| fT \|_{ X_{x_{1},y} } = \int_{\R^3} \|v(x_{1}) (fT)(x_{1}, y)\|_{B^{\sigma}_{x_{1}} } \dd y  \le A \|vf\|_{B^{\sigma}} 
}
for some finite constant $A$. 

We first record some formal properties of these spaces. 
In what follows, the parameter $\frac12\le \sigma<1$ will be kept fixed. In principle we would like to set $\sigma=\frac12$ which is optimal. But in order to obtain decay in $y$ for the $Y$ algebra,
which is needed in Wiener's theorem in the next section, we require $\sigma>\frac12$, cf.~Lemma~\ref{lem:K1weighted}.

\begin{lemma}\label{lem:Ynorms}
For any $T\in Y$
\EQ{\label{eq:Tnorm+}
 \|T\|_{Y} \les  \|T\|_{L^{1}_{y}\,\B(v^{-1}B_{x_{0}}, L^{\infty}_{x_{1}})}
}
provided the right-hand side is finite. 
\end{lemma}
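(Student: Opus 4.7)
The plan is to unpack the definition $\|T\|_Y = \|T\|_Z + \|T\|_{\B(v^{-1}B_{x_0}, X_{x_1,y})}$ and bound each summand separately by the assumed finite quantity
\[
\|T\|_\star := \int_{\R^3} \|T(\cdot,\cdot,y)\|_{\B(v^{-1}B_{x_0}, L^\infty_{x_1})}\,dy.
\]
The only nontrivial ingredient will be the continuous embedding $L^\infty \hookrightarrow v^{-1}B$ recorded in~\eqref{eq:XX0}, namely $\|h\|_{v^{-1}B}=\|vh\|_{B^\sigma}\le \|v\|_{B^\sigma}\,\|h\|_\infty$, which will be used to trade a target-space $L^\infty_{x_1}$-norm for the stronger $v^{-1}B_{x_1}$-norm, and dually to trade a domain $v^{-1}B_{x_0}$-norm for an $L^\infty_{x_0}$-norm.

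For the $\B(v^{-1}B_{x_0}, X_{x_1,y})$ part, I would fix $f \in v^{-1}B_{x_0}$ and, for each $y$, write $(fT)(\cdot,y) = T(\cdot,\cdot,y)f \in L^\infty_{x_1}$ with $\|(fT)(\cdot,y)\|_\infty \le \|T(\cdot,\cdot,y)\|_{\B(v^{-1}B,L^\infty)}\,\|f\|_{v^{-1}B}$. The embedding~\eqref{eq:XX0} then upgrades this to
\[
\|(fT)(\cdot,y)\|_{v^{-1}B_{x_1}} = \|v\,(fT)(\cdot,y)\|_{B^\sigma} \le \|v\|_{B^\sigma}\,\|T(\cdot,\cdot,y)\|_{\B(v^{-1}B,L^\infty)}\,\|f\|_{v^{-1}B},
\]
and integrating in $y$ yields $\|fT\|_X \le \|v\|_{B^\sigma}\,\|T\|_\star\,\|f\|_{v^{-1}B}$, which is the desired operator bound.

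For the $Z$ part, the key observation is that for any integral kernel $K(x_0,x_1)$ on $\R^6$ one has
\[
\|K\|_{L^\infty_{x_1}L^1_{x_0}} = \|K\|_{\B(L^\infty_{x_0},L^\infty_{x_1})},
\]
so $\|T\|_Z = \sup_\eta \|\mc F_y T(\cdot,\cdot,\eta)\|_{\B(L^\infty_{x_0},L^\infty_{x_1})}$. Writing $\mc F_y T(\cdot,\cdot,\eta) = \int e^{-iy\eta} T(\cdot,\cdot,y)\,dy$ and taking operator norms under the integral gives
\[
\|T\|_Z \le \int \|T(\cdot,\cdot,y)\|_{\B(L^\infty_{x_0},L^\infty_{x_1})}\,dy,
\]
and factoring through the inclusion $L^\infty \hookrightarrow v^{-1}B$ from~\eqref{eq:XX0} bounds each integrand by $\|v\|_{B^\sigma}\,\|T(\cdot,\cdot,y)\|_{\B(v^{-1}B,L^\infty)}$. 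Adding the two estimates gives $\|T\|_Y \le 2\,\|v\|_{B^\sigma}\,\|T\|_\star$.

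There is no genuine obstacle: the whole statement is a soft consequence of the one nontrivial inclusion $L^\infty \subset v^{-1}B$. The mild subtlety to watch is keeping track of which variable the embedding is being applied to (it is used on the \emph{output} variable $x_1$ for the $X$-estimate and on the \emph{input} variable $x_0$ for the $Z$-estimate), and verifying that the formal interchange of Fourier transform in $y$ with the $\B(L^\infty,L^\infty)$-norm is legitimate, which follows from Minkowski's inequality once the right-hand side $\|T\|_\star$ is finite.
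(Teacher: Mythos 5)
Your argument is correct and is exactly the paper's intended proof, which is given in one line (``apply the embedding $L^{\infty}\embed v^{-1}B$ and Minkowski's inequality to pull out the $L^{1}_{y}$ norm''); you have simply unpacked it. Both components of the $Y$-norm are handled as the authors intended: the operator-to-$X$ part by upgrading $L^\infty_{x_1}$ to $v^{-1}B_{x_1}$ via~\eqref{eq:XX0} and integrating in $y$, and the $Z$ part by Minkowski for $\mc F_y$ together with the same embedding applied to the domain variable $x_0$, using the identification $L^\infty_{x_1}L^1_{x_0}=\B(L^\infty_{x_0},L^\infty_{x_1})$ that the paper itself records in the proof of Lemma~\ref{lem:6prop}.
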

\begin{proof}
Apply the embedding $L^{\infty}\embed v^{-1}B$ and Minkowski's inequality (to pull out the $L^{1}_{y}$ norm). 
\end{proof}

The algebras used in \cite{bec,becgol}
 have the structure of $L^{1}_{y}$ convolution algebras, taking values in the bounded operators on some Banach space~$X$, cf.~the right-hand side of~\eqref{eq:Tnorm+}.   In~\cite{bec,becgol} it suffices to consider the {\em one-dimensional} Fourier transform of $R_{0}(\lambda^{2}+i0)(x_{0},x_{1})$ relative to~$\lambda$, which is a measure supported on the sphere of radius $|x_{0}-x_{1}|$ in $\R^{3}$.  However, because of the phases $e^{\pm i x\cdot\eta}$ in~\eqref{eq:wichtig}, the dependence on~$\eta$ is truly three-dimensional  and the Fourier transform of~\eqref{2.90} relative to~$\eta$ is not a measure.

\begin{lemma} \label{lem:6prop}
The spaces in the previous definition possess the following properties:
\begin{enumerate}[i)]
\item Let $f\in v^{-1}B$. Then $\|f\|_{v^{-1}B}=0$, if and only if $f=0$ a.e.~on the set $\{v\ne0\}$. Restricting all functions in $v^{-1}B$ to the set $\{v\ne0\}$ turns $v^{-1}B$ into a Banach space. 
$L^1\cap L^\infty$ is dense in $v^{-1}B$, and so bounded compactly supported functions are dense in $v^{-1}B$.   $X^{0}_{x,y}:= L^{1}_{y}L^{\infty}_{x}$ is dense in~$X_{x,y}$. 
\item The space  $Y$ is a Banach space.  
 It is invariant under multiplication by bounded functions of $y$ and under changes of variable in $y$ that preserve the $L^1_y$ norm. The   $Y$ norm is invariant under translation in $y$. If $\chi$ is a Schwartz function in $\R^{3}$, then 
\[
\| T\ast \chi\|_{Y}\le \|T\|_{Y}\|\chi\|_{1}
\]
where $\ast$ denotes convolution relative to $y$ in the distributional sense. 
\item For $\frak X \in X^{0}$, define the \emph{contraction} of $T\in Y$ by $\frak X$ via 
\be\lb{contractie}
(\frak X T)(x, y) := \int_{\R^6} \frak X(x_0, y_0) T(x_0, x, y-y_0) \dd x_0 \dd y_0.   
\ee
Then $\frak X T\in X_{x,y}$ and $\|\frak X T\|_{X}\le \|T\|_{Y}\|\frak X\|_{X}$.  The right-hand side of \eqref{contractie} is to be interpreted on the Fourier side as
\EQ{\lb{eq:mcFXT}
\mc F^{-1}_{\eta}\Big [ \int_{\R^3} \mc F_{y_{0}} \frak X(x_0, \eta) \mc F_{y_{0}}T(x_0, x,\eta) \dd x_0 \Big](y)
}
The integral is absolutely convergent, and the inverse Fourier transform relative to $\eta$ is a tempered distribution.  By density of $X^{0}$ in $X$, 
the contraction $\frak X T$ is well-defined for any $\frak X\in X$. 
 \item 
Any  $T\in Y$ possesses  a distributional Fourier transform in the $y$ variable $\big(\mc F_y T(\eta)\big)(x_0, x_1)$, and 
\EQ{\label{eq:mcFY}
\mc F_y T(\eta) \in \cB(L^{\infty})\cap \B(v^{-1}B) =:\mc FY
}
where $\B(E)$ denotes the bounded linear operators on the (semi)normed space $E$. 
Moreover, 
\EQ{\lb{eq:FyTbd}
\sup_{\eta}\|\mc F_{y} T(\eta)\|_{\mc FY}=\| \mc F_y T\|_{L^{\infty}_{\eta} \B(v^{-1}B)}+\| \mc F_y T\|_{L^{\infty}_{\eta} \B(L^{\infty})}\les \|T\|_{Y}
}
\item 
A kernel of the form $S(x_0, x_1) \chi(y)$ where $\chi \in L^1$ and $S \in \mc FY$ belongs to  $Y$ and 
$$
\|S(x_0, x_1) \chi(y)\|_Y \les \|S\|_{\mc FY} \|\chi\|_{L^1}.
$$
More generally, $L^{1}_y \mc FY \subset Y$.
\item Let  $U\in \mc FY$.
Then for any $T=T(x_0,x_1,y)\in Y$ one has 
\[
(U\circ T)(x_0,x_2,y) := \int_{\R^3} U(x_1,x_2) T(x_0,x_1,y)\, dx_1 \in Y
\]
and
\[
\| U\circ T\|_Y \les \|U\|_{\mc FY} \|T\|_Y
\]
Analogous results hold for $T\circ U$. 
\end{enumerate}
\end{lemma}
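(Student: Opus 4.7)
The six items split into two tiers. Parts (i)-(ii) are bookkeeping that flow directly from the definitions: quotienting by $\{f : f=0 \text{ a.e.\ on } \{v\ne 0\}\}$ turns the seminorm into a norm, with $L^1\cap L^\infty$ dense by truncation; $Y$ is the intersection of two Banach spaces, hence Banach; $y$-translation and measure-preserving reparametrizations of $y$ act isometrically on both pieces of $\|\cdot\|_Y$ (modulation on the Fourier side for the $Z$-part, directly for the second part); and convolution in $y$ by $\chi\in L^1$ amounts to multiplication of $\mc F_y T$ by $\widehat\chi$, with $\|\widehat\chi\|_\infty\le\|\chi\|_1$.

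For (iii), I would rewrite the contraction as
\[
(\frak X\, T)(x,y)=\int_{\R^3}\big(\frak X(\cdot,y_0)\,T\big)(x,y-y_0)\,dy_0,
\]
apply the defining $Y$-bound to each slice $\frak X(\cdot,y_0)\in L^\infty$ (valid for $\frak X\in X^{0}$), and use Minkowski in $y_0$ together with the embedding $L^\infty\subset v^{-1}B$ from \eqref{eq:XX0} to get $\|\frak X\,T\|_X\le \|T\|_Y\|\frak X\|_X$. Identity \eqref{eq:mcFXT} is then Parseval, and one extends by density of $X^{0}$ in $X$.

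For (iv), for any $f\in L^\infty$ the function $g_\eta(x_1):=\int\mc F_y T(x_0,x_1,\eta)f(x_0)\,dx_0$ is, pointwise in $x_1$, the $y$-Fourier transform of $(fT)(x_1,\cdot)$ at $\eta$, so Minkowski gives
\[
\|v\,g_\eta\|_{B^\sigma_{x_1}}\le\int\|v(x_1)(fT)(x_1,y)\|_{B^\sigma_{x_1}}\,dy=\|fT\|_X\le \|T\|_Y\|f\|_{v^{-1}B};
\]
combined with the $\B(L^\infty)$-bound intrinsic to $\|T\|_Z$ and the density of $L^\infty$ in $v^{-1}B$, this yields \eqref{eq:FyTbd}. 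Then (v) follows by direct computation: the kernel $S\chi$ satisfies $(fS\chi)(x_1,y)=\chi(y)(Sf)(x_1)$, so the $X$-norm factorizes as $\|\chi\|_1\|Sf\|_{v^{-1}B}$, while the $Z$-part contributes $\|\widehat\chi\|_\infty\|S\|_{\B(L^\infty)}$; the general $L^1_y\mc FY\subset Y$ statement follows as a Bochner integral in $y$.

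Finally, (vi) reduces to (iv): on the Fourier side $\mc F_y(U\circ T)(\eta)=U\circ\mc F_y T(\eta)$ as operators on $L^\infty$, controlling the $Z$-part of the norm. For the $Y$-part, $((U\circ T)f)(x_2,y)=U[(fT)(\cdot,y)](x_2)$ for each fixed $y$, hence
\[
\|v\,((U\circ T)f)(\cdot,y)\|_{B^\sigma}\le\|U\|_{\B(v^{-1}B)}\,\|v\,(fT)(\cdot,y)\|_{B^\sigma};
\]
integrating in $y$ and invoking the $Y$-bound for $T$ gives $\|U\circ T\|_Y\les \|U\|_{\mc FY}\|T\|_Y$. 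The analogue for $T\circ U$ is symmetric, using that $Uf\in L^\infty$ whenever $f\in L^\infty$. The main obstacle is (iv): one must turn the physical-side integrability $fT\in L^1_y v^{-1}B_{x_1}$ into the uniform frequency-side bound $\sup_\eta\|\mc F_y T(\eta)\|_{\B(v^{-1}B)}<\infty$, which rests on Minkowski interchanging $\|\cdot\|_{B^\sigma}$ with $\|\cdot\|_{L^1_y}$—legitimate because $B^\sigma$ is a weighted $\ell^1$-sum of $L^2$ norms, but worth checking carefully. With (iv) in hand, the composition statement (vi) unwinds formally.
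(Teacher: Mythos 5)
Your proposal is correct and follows essentially the same route as the paper's proof, with the added service of spelling out (v) and (vi), which the paper dismisses as ``evident from the definitions.'' One point deserves emphasis: in (iv) the paper's sole inequality is exactly your Minkowski step, i.e.\ that the Banach-space-valued Fourier transform satisfies $\sup_\eta\|\widehat F(\eta)\|_{v^{-1}B}\le\|F\|_{L^1_y\,v^{-1}B}$; this holds for any Banach space target, so the caution you raise about $B^\sigma$ being an $\ell^1$-sum of $L^2$ pieces is unnecessary, but it is worth noting this is the pivot from the physical to frequency side that the rest of the lemma rests on. One small remark on (iii): you do not actually invoke the embedding $L^\infty\subset v^{-1}B$ in a substantive way; for $\frak X\in X^0$ each slice $\frak X(\cdot,y_0)$ is in $L^\infty$, and the defining $Y$-bound gives $\|\frak X(\cdot,y_0)T\|_X\le\|T\|_Y\,\|\frak X(\cdot,y_0)\|_{v^{-1}B}$, so integrating in $y_0$ directly produces $\|\frak X\|_X$ on the right-hand side, exactly as in the paper.
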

\begin{proof}
The properties of $v^{-1}B$ follow from simple measure theory, and we skip the details.  By \eqref{eq:XX0},
\[
\| \frak X\|_{L^{1}_{y}v^{-1}B_{x}} \le \|v\|_{B^{\sigma}}\| \frak X\|_{L^{1}_{y}L^{\infty}_{x}}
\]
and so $X^{0}$ embeds continuously into~$X$, and is dense in~$X$. 

The $Z$ component in~\eqref{eq:Ynorm} guarantees that $\|\cdot\|_{Y}$ is truly a norm. We further note that $\B(v^{-1}B_{x_0}, L^1_y v^{-1} B_{x_{1}} )$ is a Banach space provided we restrict both $x_{0}$ and $x_{1}$ to $\{v\ne0\}$. Thus, $Y$ is complete relative to both components of the $Y$-norm.  

For iii), one has 
\EQ{
\| (\frak X T)(x, y)\|_{X_{x,y}} &= \Big \| \int_{\R^3} \big[ \frak X(\cdot, y_0)T(\cdot)\big] (x, y-y_0) \dd y_0 \Big\|_{X_{x,y}} \\
&\le \|T\|_{Y} \int_{\R^{3}} \|\frak X(\cdot,y_{0})\|_{v^{-1}B}\, dy_{0}
}
as claimed. For iv), first note that   $T\in Y\subset Z$ has the property that 
$$\mc F_{y}T(x_{0},x_{1},\eta) \in  L^{\infty}_{\eta}L^{\infty}_{x_{1}}L^{1}_{x_{0}}=   L^{\infty}_{\eta} \B(L^{\infty}_{x_{0}},L^{\infty}_{x_{1}})$$
Second, for any Schwartz function $f$,
\EQ{\lb{eq:FyT}
&\sup_{\eta\in\R^{3}} \Big\|\int_{\R^{3}} \mc F_{y}T(x_{0},x_{1},\eta)f(x_{0})\, dx_{0}\Big\|_{v^{-1}B_{x_{1}}} \\
&\le \Big \| \int_{\R^{3}} T(x_{0},x_{1},y) f(x_{0})\, dx_{0} \Big\|_{v^{-1}B_{x_{1}} L^{1}_{y}} \\
&\le \|T\|_{\B(v^{-1} B_{x_0},  L^{1}_{y}v^{-1}B_{x_{1}})} \|f\|_{v^{-1}B} \\
&\le  \|T\|_{\B(v^{-1} B_{x_0}, X_{x_{1},y})} \|f\|_{v^{-1}B}
} 
Properties v) and vi) are evident from the definitions. 
\end{proof}
 
The space $Y$ is by definition the space of kernels $T\in Z$ so that the contraction $fT$ for any $f\in v^{-1}B$ lies in $X$, where we define the contraction by~\eqref{2.31}. 
This is  what Lemma~\ref{lem:K1} expresses for $T_{1+}^{\eps}$ with $v=V$. 

\begin{corollary}
\label{cor:T1Y}
Let $V$ be Schwartz and apply Definition~\ref{def:VB} with $v=V$, the potential. Then for every $\eps>0$ we have $T_{1+}^{\eps}\in Y$ (where $\sigma\in [\frac12,1)$ is arbitrary but fixed) and
\EQ{
\sup_{\eps>0} \| T_{1+}^{\eps} \|_{Y} \les \| V\|_{B^{\frac12+\sigma}}
}
\end{corollary}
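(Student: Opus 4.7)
The plan is to verify the two summands of $\|T_{1+}^\eps\|_Y$ in turn, with the first being a restatement of Lemma \ref{lm2.4} and the second a direct application of Lemma \ref{lem:K1}.

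For the $Z$--component: formula \eqref{2.90} identifies $\mathcal{F}_y T_{1+}^\eps(x_0,x_1,\eta)$ with the kernel of $R_0(|\eta|^2-i\eps)V$ conjugated by the unitary multipliers $e^{\pm ix\cdot\eta}$, which are harmless for the $L^\infty_{x_1}L^1_{x_0}$ norm. Hence Lemma \ref{lem:0regBd} (equivalently Lemma \ref{lm2.4}, eq.\ \eqref{eq:T1Zbd}) gives
$$\sup_{\eps>0}\|T_{1+}^\eps\|_Z \lesssim \|V\|_{L^{3/2,1}} \lesssim \|V\|_{\dot B^{1/2}} \le \|V\|_{B^{1/2+\sigma}}.$$

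For the operator-norm component, fix $f\in L^\infty\cap L^1$ (a dense subclass of $V^{-1}B$ by Lemma \ref{lem:6prop}(i)) and form the contraction
$$(fT_{1+}^\eps)(x_1,y) = \int_{\R^3} f(x_0)\,T_{1+}^\eps(x_0,x_1,y)\,dx_0,$$
which is precisely the kernel $\widetilde{K}_{1+}^\eps$ from Lemma \ref{lem:K1}. By \eqref{eq:fV1}, this kernel equals the first-order kernel $K_{1+}^\eps$ associated to the modified potential $fV$. Apply estimate \eqref{eq:Kest2f} with multiplier $v=V$ (admissible since $V$ is Schwartz, hence $V\in B^{1/2+\sigma}$):
$$\|V(x_1)(fT_{1+}^\eps)(x_1,y)\|_{L^1_y B^\sigma_{x_1}} \lesssim \|V\|_{B^{1/2+\sigma}}\,\|fV\|_{B^{1/2}}.$$
Because $\sigma\ge \tfrac12$ forces $B^\sigma \hookrightarrow B^{1/2}$ with constant $1$, we have $\|fV\|_{B^{1/2}} \le \|fV\|_{B^\sigma} = \|f\|_{V^{-1}B}$. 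The left-hand side is exactly $\|fT_{1+}^\eps\|_{X_{x_1,y}}$, so we obtain
$$\|fT_{1+}^\eps\|_{X_{x_1,y}} \lesssim \|V\|_{B^{1/2+\sigma}}\,\|f\|_{V^{-1}B}\qquad\forall\,f\in L^\infty\cap L^1.$$

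Since $X_{x_1,y}$ is complete and $L^\infty\cap L^1$ is dense in $V^{-1}B$, the map $f\mapsto fT_{1+}^\eps$ extends by continuity to a bounded operator $V^{-1}B_{x_0}\to X_{x_1,y}$ with the same norm bound, giving $\|T_{1+}^\eps\|_{\mathcal{B}(V^{-1}B_{x_0},X_{x_1,y})} \lesssim \|V\|_{B^{1/2+\sigma}}$. Summing the two estimates yields the claim.

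The only point requiring care is recognising that the abstract contraction \eqref{2.31} in the definition of $Y$ coincides with the concrete $\widetilde{K}_{1+}^\eps$ studied in Lemma \ref{lem:K1}, so that the already-proven kernel bound \eqref{eq:Kest2f} (rather than a new computation) delivers the operator-norm estimate; the embedding $B^\sigma\hookrightarrow B^{1/2}$ is what makes $v=V$ a legal choice of multiplier while keeping the right-hand side in terms of $\|f\|_{V^{-1}B}$.
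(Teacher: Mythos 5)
Your proof is correct and follows essentially the same route as the paper: the $Z$-component is handled by Lemma~\ref{lm2.4} (eq.~\eqref{eq:T1Zbd}), and the operator-norm component by Lemma~\ref{lem:K1}, eq.~\eqref{eq:Kest2f}, after identifying the contraction $fT_{1+}^\eps$ with $\widetilde K_{1+}^\eps$ and using $\|fV\|_{B^{1/2}}\le\|fV\|_{B^\sigma}=\|f\|_{V^{-1}B}$. The only small gap is that \eqref{eq:Kest2f} is stated for Schwartz $f$, so strictly speaking one first applies it to Schwartz functions and then extends by density both to $L^1\cap L^\infty$ and to $V^{-1}B$; the paper glosses this over as well.
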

\begin{proof}
By \eqref{eq:T1Zbd} we have 
\[
\sup_{\eps>0} \| T_{1+}^{\eps}\|_{Z}\les \|V\|_{B^{\frac12}}
\]
Lemma~\ref{lem:K1} implies that 
\EQ{\label{eq:fTBB}
\sup_{\eps>0}\Big\| \int_{\R^{3}} f(x_{0}) T_{1+}^{\eps}(x_{0},x,y)\, dx_{0}\Big\|_{X_{x,y}} &\les   \|fV\|_{B^{\sigma}} \|V\|_{B^{\frac12+\sigma}}
}
which concludes the proof that $T_{1+}^{\eps}\in Y$ with given $\sigma\in [\frac12, 1)$. 
\end{proof}

By the same proof, Lemma~\ref{lem:K1} evidently allows allows for a stronger conclusion, namely 
\EQ{\label{eq:T1 ext}
\sup_{\eps>0} \| T_{1+}^{\eps} \|_{Y^{e}} \les \| V\|_{B^{\frac12+\sigma}}
}
where $Y^{e}$ is the {\em extended} (with respect to the {\em norm}) space $Y$. It is defined as above, but using $X^{e}:=X\cap L^{\infty}_{x}L^{1}_{y}$ instead of~$X$.
 We do not include the $L^{\infty}_{x}L^{1}_{y}$-norm in our construction of $X$ and $Y$ above as this would invalidate the condition~\eqref{eq:Sass2T}, which is crucial for the Wiener theorem.  However, once we have applied the Wiener theorem in the larger algebra~$Y$, we are then able to control the $L^{\infty}_{x}L^{1}_{y}$-norm.  

We now state the algebra property of~$Y$ (which also holds for $Y^{e}$). 

\begin{lemma}\lb{yalg}
$Y$ defined by (\ref{2.31}) is a  Banach algebra with the operation $\oast$ defined in the ambient algebra~$Z$.  
\end{lemma}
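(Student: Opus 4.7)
The plan is to reduce the algebra property of $Y$ to two facts already in hand: that $Z$ is a Banach algebra under $\oast$ (Lemma \ref{lm2.4}), and that contraction of a $Y$-kernel by an element of $X$ lands in $X$ with the expected bound (Lemma \ref{lem:6prop}, iii). The identity element \eqref{ident} is obvious: for any $f\in v^{-1}B$, the contraction $fI$ equals $f(x_1)\delta_0(y)$, whose $X$-norm is exactly $\|f\|_{v^{-1}B}$, so $I\in Y$ with $\|I\|_Y\les 1$.

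For the submultiplicativity, let $T_1,T_2\in Y$. The $Z$-component of the $Y$-norm is immediate from Lemma \ref{lm2.4}: $\|T_1\oast T_2\|_Z\le \|T_1\|_Z\|T_2\|_Z\le \|T_1\|_Y\|T_2\|_Y$. The real work is to show that for every $f\in v^{-1}B$, the contraction $f(T_1\oast T_2)$ lies in $X_{x_2,y}=L^1_y v^{-1}B_{x_2}$ with the expected bound. The key identity to establish is the associative rewriting
\[
f(T_1\oast T_2)(x_2,y) \;=\; \big((fT_1)\, T_2\big)(x_2,y),
\]
where on the right, $fT_1\in X_{x_1,y_0}$ and we are applying the contraction operation \eqref{contractie} of Lemma \ref{lem:6prop}. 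Granted this identity, the required estimate follows at once from (iii) of Lemma \ref{lem:6prop}:
\[
\|f(T_1\oast T_2)\|_X = \|(fT_1)T_2\|_X \le \|T_2\|_Y\,\|fT_1\|_X \le \|T_1\|_Y\|T_2\|_Y\,\|f\|_{v^{-1}B},
\]
so $\|T_1\oast T_2\|_{\B(v^{-1}B, X)}\le \|T_1\|_Y\|T_2\|_Y$. Summed with the $Z$-bound, this gives $\|T_1\oast T_2\|_Y\les \|T_1\|_Y\|T_2\|_Y$, which is what is needed.

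The step requiring care is the identity $f(T_1\oast T_2)=(fT_1)T_2$, since $\oast$ on $Z$ was defined on the Fourier side \eqref{comp}. The cleanest path is to work on the Fourier side: for $f$ Schwartz,
\[
\mc F_y\big[f(T_1\oast T_2)\big](x_2,\eta)=\int \hat{}_0 f(x_0)\Big(\int \mc F_y T_1(x_0,x_1,\eta)\mc F_y T_2(x_1,x_2,\eta)\,dx_1\Big) dx_0,
\]
and Fubini is legal here because $\mc F_yT_j\in L^\infty_\eta\B(L^\infty)$ by the $Z$-membership, so the double integral converges absolutely against any Schwartz $f$. Exchanging the order of integration identifies the inner integral as $\mc F_y(fT_1)(x_1,\eta)$, producing exactly the Fourier-side expression \eqref{eq:mcFXT} for $(fT_1)T_2$. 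Since bounded compactly supported functions are dense in $v^{-1}B$ (Lemma \ref{lem:6prop}, i) and both sides depend continuously on $f\in v^{-1}B$ in view of the bound just derived, the identity extends to all $f\in v^{-1}B$ by approximation.

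The only real obstacle is the distributional/Fubini bookkeeping above, since the kernels are only tempered distributions in the $y$-variable; but the $Z$-hypothesis makes all the relevant Fourier transforms honest $L^\infty$-valued functions of $\eta$, which makes the interchange and the limiting argument straightforward. Completeness of $Y$ follows from completeness of $Z$ together with completeness of $\B(v^{-1}B,X)$ (after restricting functions in $v^{-1}B$ to $\{v\ne 0\}$ as in Lemma \ref{lem:6prop}, i), so no additional difficulty appears there.
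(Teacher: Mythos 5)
Your proof is correct and follows essentially the same route as the paper: both arguments hinge on the factorization $f(T_1\oast T_2)=(fT_1)\,T_2$, identify $fT_1$ as an element of $X$ with $\|fT_1\|_X\les\|f\|_{v^{-1}B}\|T_1\|_Y$, and then apply the contraction bound of Lemma~\ref{lem:6prop}\,iii) to control $\|(fT_1)T_2\|_X$ by $\|T_1\|_Y\|T_2\|_Y\|f\|_{v^{-1}B}$. The paper states the factorization directly in physical variables (eq.~\eqref{eq:T3oast}) and notes the $Z$-norm and absolute-constant rescaling tersely, whereas you justify the same identity on the Fourier side and add remarks on completeness that the paper already absorbed into Lemma~\ref{lem:6prop}\,ii).
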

\begin{proof}
The fact that $\oast$ is associative (and non-commutative) is clear in $Z$, and the unit element is given by (\ref{ident}). Since $Y \subset Z$, the same is true in $Y$.

 The definitions of $X$ and $Y$ imply that each contraction $\frak X T$ (see (\ref{contractie})) is in $X$ and $\|\frak X T\|_X \les \|\frak X\|_X \|T\|_Y$.
 We have 
\EQ{\label{eq:T3oast}
&\int_{\R^3} f(x_0) T_3(x_0, x_2, y) \dd x_0 = \\
&= \int_{\set R^9} f(x_0) T_1(x_0, x_1, y_1) T_2(x_1, x_2, y-y_1) \dd x_1 \dd y_1 \dd x_0.
}
As in the case of \eqref{contractie}, the $y$-integral is to be understood in the distributional Fourier sense. 
Integrating in $x_0$, we obtain an expression of the form $\frak X T_2$ for $\frak X \in X$ with $\|\frak X\|_X \les \|f\|_{V^{-1} B} \|T_1\|_Y$. Then $\frak X T_2$ belongs to $X$ as stated above and has a norm  at most $\les \|f\|_{V^{-1} B} \|T_1\|_Y \|T_2\|_Y$. Thus, $T_3 = T_1 \oast T_2 \in Y$ and
$$
\|T_1 \oast T_2\|_Y \le C \|T_1\|_Y \|T_2\|_Y
$$
with some absolute constant $C$. Multiplying the norm by $C$ removes this constant from the previous inequality, and so $Y$ is an algebra under this new norm. 
\end{proof}

Thus, provided $I + T_{1+}^\eps $ is invertible in $Y$, hence in $Z$, its inverse will be $I - T_+^\eps $ both in $Z$ and in $Y$, hence we obtain that $T_+^\eps  \in Y$.

\begin{prop}\lb{prop:Tngn}
Let $V$ be a Schwartz potential. Then $T_{n+}^{\eps}\in Y$ (where $\sigma\in [\frac12,1)$ is arbitrary but fixed) for any $n\ge1$ and $\eps>0$ and 
\EQ{\label{eq:WnY}
\sup_{\eps>0} \| T_{n+}^{\eps}\|_{Y}\le C^{n}\|V\|_{B^{\frac12+\sigma}}^{n}
}
with some absolute constant $C$.  Moreover, for all Schwartz functions $f$ one has 
\EQ{\label{eq:gn}
(W_{n+}^{\eps}f)(x) = \int_{\Sph^2 }\int_{\R^3} g_n^{\eps}(x,dy,\omega) f(S_\omega x-y)\,  \sigma(d\omega)
}
where  for fixed $x\in\R^3$, $\omega\in \Sph^2 $ the expression $g_n^{\eps}(x,\cdot,\omega)$ is a measure satisfying 
\EQ{\lb{eq:gnmeas}
\sup_{\eps>0} \int_{\Sph^2 } \| g_n^{\eps}(x,dy,\omega)\|_{\mes_{y} L^\infty_x}  \, d\omega \le  C^{n}\|V\|_{B^{\frac12+\sigma}}^{n}
}
where $\|\cdot \|_{\mes}$ refers to the total variation norm of Borel measures.  Relations \eqref{eq:gn} and \eqref{eq:gnmeas} remain valid if $V\in B^{\frac12+\sigma}$. 
\end{prop}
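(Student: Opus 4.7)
The plan is to establish the two assertions of the proposition in sequence: first the $Y$-norm bound \eqref{eq:WnY}, and then the structure formula \eqref{eq:gn} together with the mass estimate \eqref{eq:gnmeas}.

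For \eqref{eq:WnY} I would argue by induction on $n$. The base case $n=1$ is exactly Corollary \ref{cor:T1Y}, giving $\|T_{1+}^\eps\|_Y \les \|V\|_{B^{\frac12+\sigma}}$ uniformly in $\eps$. For the inductive step, Lemma \ref{lem:Tast} identifies $T_{(n+1)+}^\eps = T_{1+}^\eps \oast T_{n+}^\eps$, and the Banach algebra property of $Y$ from Lemma \ref{yalg} gives $\|T_{(n+1)+}^\eps\|_Y \le C_\ast \|T_{1+}^\eps\|_Y \|T_{n+}^\eps\|_Y$. Iterating this bound and absorbing $C_\ast$ into a new constant $C$ yields \eqref{eq:WnY}.

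For the structure formula I would again use induction on $n$, the base case being Corollary \ref{cor:K1+} together with Proposition \ref{lemma2.9} for the mass bound. The inductive step proceeds from the ``outer-last'' form of the composition $T_{n+}^\eps = T_{(n-1)+}^\eps \oast T_{1+}^\eps$: integrating in $x_0$ and using the identity $\int T_{(n-1)+}^\eps(x_0, x_{n-1}, y_1)\,dx_0 = (-1)^{n-1} K_{(n-1)+}^\eps(x_{n-1}, y_1)$ produces the kernel recursion
\[
K_{n+}^\eps(x_n, y) \,=\, -\int K_{(n-1)+}^\eps(x_{n-1}, y_1)\,T_{1+}^\eps(x_{n-1}, x_n, y-y_1)\,dx_{n-1}\,dy_1,
\]
where $K_{m+}^\eps$ denotes the integral kernel of $W_{m+}^\eps$ via $W_{m+}^\eps f(x) = \int K_{m+}^\eps(x, y) f(x-y)\,dy$. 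The outer factor $T_{1+}^\eps(x_{n-1}, x_n,\cdot)$ admits an explicit $L_\eps$-representation via Lemmas \ref{lem:FTe} and \ref{lem:Kexp}; in particular, after passage to polar coordinates in $y$, the outermost displacement $x_n - x_{n-1}$ is aligned with a single direction $\omega$, and this is what produces the single reflection appearing in \eqref{eq:gn}. I would then define $g_n^\eps(x, dy, \omega)$ by folding the inductively constructed $(n-1)$-fold data into the $y$-direction of the line $\ell_\omega$, mirroring the construction \eqref{eq:g1def} at the outermost step.

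The quantitative bound \eqref{eq:gnmeas} is obtained by tracking $L^1_y L^\infty_x$-type norms through this recursion: each inductive step contributes exactly one factor of $\|V\|_{B^{\frac12+\sigma}}$, obtained at the innermost level from Proposition \ref{lemma2.9} and at the outer iterations from the weighted estimate Lemma \ref{lem:K1weighted} (with $\gamma_1 = \gamma_2 > 0$), the latter being essential to prevent weights accumulating on the inner factors. The main obstacle will be verifying that the bundling described above genuinely produces a single-$\omega$ structure formula rather than a nested $n$-fold one, and that the $\sup_x$ defining the mass norm commutes through the $x_{n-1}$-integration without loss; the line-concentrated form of the outer $T_{1+}^\eps$ (together with the $Y$-algebra control giving $fT \in L^1_y v^{-1}B_x$ for $f \in v^{-1}B$) is precisely what makes this possible. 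Finally, the extension from Schwartz $V$ to $V \in B^{\frac12+\sigma}$ is a standard density argument: all bounds are continuous in $\|V\|_{B^{\frac12+\sigma}}$ and uniform in $\eps > 0$, so the structure formula and its mass estimate pass to the limit.
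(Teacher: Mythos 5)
Your proposal follows essentially the same route as the paper: the $Y$-norm bound via the algebra property of $Y$ and Corollary~\ref{cor:T1Y}, and the structure formula by rewriting $W_{n+}^\eps = -W_{(n-1)+}^\eps \, T_{1+}^\eps$, applying Corollary~\ref{cor:K1+} to the effective potential $f^\eps_{y'}V$ (via \eqref{eq:fV1}) for each fixed $y'$, and folding over $y'$ as in \eqref{eq:I}--\eqref{eq:IV}, with the mass estimate controlled by $\|W_{(n-1)+}^\eps\|_X$, which in turn comes from~\eqref{eq:WnY}. One small correction: Lemma~\ref{lem:K1weighted} is not used here — the mass bound at each level only needs the unweighted estimate from Corollary~\ref{cor:K1+} plus the $X$-bound \eqref{eq:WnX}; the weighted lemma becomes essential only later, when verifying the decay hypothesis \eqref{eq:Sass2T} for the Wiener theorem, and the norm being tracked through the recursion is $X = L^1_y V^{-1}B_x$ (not $L^1_y L^\infty_x$, which is out of reach, cf.\ the remark preceding~\eqref{eq:T1 ext}).
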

\begin{proof}
By Lemma~\ref{lem:Tast}, $T_{n+}^{\eps} = T_{1+}^{\eps}\oast T_{(n-1)+}^{\eps}$. Corollary~\ref{cor:T1Y} and Lemma~\ref{yalg} imply~\eqref{eq:WnY} by induction. 

From Lemma~\ref{lemma2.1}, using the contraction formalism from above,  and identifying the operator $W_{n+}^{\eps}$ with its kernel one has 
\EQ{\lb{eq:1Tn}
W_{n+}^{\eps} &= (-1)^{n} \one_{\R^{3}}T_{n+}^{\eps} = (-1)^{n} \one_{\R^{3}}(T_{(n-1)+}^{\eps}\oast T_{1+}^{\eps} )\\
&= -((-1)^{n-1} \one_{\R^{3}} T_{(n-1)+}^{\eps}) T_{1+}^{\eps} = - W_{(n-1)+}^{\eps} T_{1+}^{\eps}
}
The notation in the second line contraction of a kernel in $Y$ by an element of $X$; this follows again by induction starting from  $W_{0+}^{\eps}=\one_{\R^{3}}$ via~\eqref{contractie}. 
Strictly speaking, we have so far considered contractions only against Schwartz functions. But  $\one_{\R^{3}}$  is the limit in the space $V^{-1}B$ of smooth bump functions $\chi(\cdot/R)$
as $R\to\infty$ (where $\chi$ is smooth compactly supported and $\chi=1$ on the unit ball). By the boundedness of $T_{n+}^{\eps}$ in $Y$ it follows that the right-hand side of \eqref{eq:1Tn} is well-defined in $Y$.  Thus,  by the first equality sign in~\eqref{eq:1Tn}, 
\EQ{\label{eq:WnX}
\sup_{\eps>0} \| W_{n+}^{\eps}\|_{X}\le \|\one_{\R^{3}}\|_{V^{-1}B} \sup_{\eps>0} \|T_{n+}^{\eps}\|_{Y} \le C^{n}\|V\|_{B^{\frac12+\sigma}}^{n+1}
}
We denote the kernel of $W_{1+}^{\eps}$ by $\frak X_{V}^{\eps}$, where $V$ is the potential.  Thus,
\EQ{\label{eq:frakXV}
\frak X_{V}^{\eps} (x,y) = - \int_{\R^{3}}  T_{1+}^{\eps} (x_{0},x,y)\, dx_{0} =  -(\one_{\R^{3}} T_{1+}^{\eps})(x,y) \in X
}
By the final equality sign in \eqref{eq:1Tn}, 
\EQ{\label{eq:Wneps}
W_{n+}^{\eps}(x,y) &= -\int_{\R^{6}} W^{\eps}_{(n-1)+}(x',y')T_{1+}^{\eps}(x',x,y-y') \, dx'dy' \\
&= \int_{\R^{3}} \frak X^{\eps}_{f^{\eps}_{y'}V}(x,y-y')\, dy'
}
Here we wrote $f^{\eps}_{y'}(x')=W^{\eps}_{(n-1)+}(x',y')$ and we used \eqref{eq:fV1}. We also assumed that $ f^{\eps}_{y'}(x')$ is Schwartz in $x'$ to make this calculation rigorous. Later we shall remove this assumption by approximation. 

We now invoke the representation from Corollary~\ref{cor:K1+}. Specifically, by \eqref{eq:g1} there exists $g_{1,f^{\eps}_{y'}}^{\eps}(x,dy,\omega)$  so that for every 
$\phi\in \mc S$ one has
\EQ{\label{eq:g1*}
(\frak X^{\eps}_{f^{\eps}_{y'}V}\; \phi)(x) = \int_{\Sph^2 }\int_{\R^3} g_{1,f^{\eps}_{y'}}^{\eps} (x,dy,\omega) \phi(S_\omega x-y)\,  \sigma(d\omega)
}
where  for fixed $x\in\R^3$, $\omega\in \Sph^2 $ the expression $g_{1,f^{\eps}_{y'}}^{\eps}(x,\cdot,\omega)$ is a measure satisfying 
\EQ{\lb{eq:gnmeas'}
\sup_{\eps>0} \int_{\Sph^2 }  \| g_{1,f^{\eps}_{y'}}^{\eps}(x,dy,\omega)\|_{\mes_{y} L^\infty_x}  \, d\omega &\le C\|f^{\eps}_{y'}V\|_{B^{\frac12}} \le C\|f^{\eps}_{y'}V\|_{B^{\sigma}}\\
&= C  \| W^{\eps}_{(n-1)+}(x',y') \|_{V^{-1}B_{x'}}    
}
Thus
\begin{align} 
(W^{\eps}_{n+}\phi)(x) &= \int_{\R^{3}} W_{n+}^{\eps}(x,y) \phi(x-y)\, dy \nn  \\
&= \int_{\R^{6}} \frak X^{\eps}_{f^{\eps}_{y'}V}(x,y-y') \phi(x-y)\, dy dy'  \nn  \\
&=   \int_{\R^{6}} \frak X^{\eps}_{f^{\eps}_{y'}V}(x,y) \phi(x-y-y')\, dy dy'  \nn  \\
& = \int_{\R^{3}} (\frak X^{\eps}_{f^{\eps}_{y'}V} \phi)(x-y')\,dy' \lb{eq:I} \\
&= \int_{\R^{3}} \int_{\Sph^2 }\int_{\R^3} g_{1,f^{\eps}_{y'}}^{\eps} (x-y',dy,\omega) \phi(S_\omega (x-y')-y)\,  \sigma(d\omega) \,dy' \nn \\
&=  \int_{\Sph^2 }\int_{\R^3} \Big[ \int_{\R^{3}} g_{1,f^{\eps}_{y'}}^{\eps} (x-y',d(y-S_{\omega}y'),\omega) \, dy'\Big] \phi(S_\omega x-y)\,  \sigma(d\omega)   \nn 
\end{align}
The expressions in brackets is the kernel we seek, i.e., 
\EQ{\lb{eq:II}
g_{n}(x,dy, \omega) := \int_{\R^{3}} g_{1,f^{\eps}_{y'}}^{\eps} (x-y',d(y-S_{\omega}y'),\omega) \, dy'
}
This object is a measure in the $y$-coordinate and we have the representation
\EQ{\lb{eq:III}
(W^{\eps}_{n+}\phi)(x) &= \int_{\Sph^{2}} \int_{\R^{3}}g_{n}(x,dy, \omega) \phi(S_\omega x-y)\,  \sigma(d\omega) 
}
as well as the size control uniformly in $\eps>0$ 
\EQ{\lb{eq:IV}
 & \int_{\Sph^2 }  \| g_n^{\eps}(x,dy,\omega)\|_{\mes_{y} L^\infty_x}  \, d\omega  \\
& = \int_{\Sph^2 }\int_{\R^3}    \big\| g_{1,f^{\eps}_{y'}}^{\eps} (x-y',d(y-S_{\omega}y'),\omega) \big \|_{\mes_{y} L^\infty_x}   \,  dy'  d\omega\\
&= \int_{\Sph^2 }\int_{\R^3}   \big\| g_{1,f^{\eps}_{y'}}^{\eps} (x,dy,\omega) \big \|_{\mes_{y} L^\infty_x}   \,  dy'  d\omega   \\
&\le    C  \int_{\R^3}  \| W^{\eps}_{(n-1)+}(x',y') \|_{V^{-1}B_{x'}} \,  dy' \\
&\le  C   \| W^{\eps}_{(n-1)+} \|_{X} \le C^{n}\|V\|_{B^{\frac12+\sigma}}^{n}
}
by \eqref{eq:WnX}
as desired. Recall that we assumed that $f^{\eps}_{y'}(x')$ is a Schwartz function. To remove this assumption,  we can make $\| W^{\eps}_{(n-1)+}(x',y')-\tilde f^{\eps}_{y'}(x') \|_{X}$ arbitrarily small with a Schwartz function $\tilde f^{\eps}_{y'}(x')$ in $\R^{6}$. Then the previous calculation  shows that 
\[
\int_{\Sph^2 }  \| g_n^{\eps}(x,dy,\omega)- \tilde g_n^{\eps}(x,dy,\omega)\|_{\mes_{y} L^\infty_x}  \, d\omega 
\]
can be made as small as we wish where $\tilde g_n^{\eps}(x,dy,\omega)$ is the function generated by $\tilde f^{\eps}_{y'}(x')$. Passing to the limit concludes the proof. 

To remove the assumption that $V$ be a Schwartz function, we approximate $V\in B^{\frac12+\sigma}$ by Schwartz functions in the norm $\|\cdot\|_{B^{\frac12+\sigma}}$. 
We achieve convergence of of the functions $g_{n}$ by means of \eqref{eq:gnmeas} and of the kernels $W_{n+}^{\eps}$ themselves by means of~\eqref{eq:WnX}. To be specific, denoting by $\widetilde W^{\eps}_{n+}$ and  $\tilde g_{n}$ the quantities corresponding to the potential $\tilde V$,
taking differences yields 
\EQ{\nn 
& \| \widetilde  W^{\eps}_{n+} -  W^{\eps}_{n+} \|_{X} + \int_{\Sph^2 } \| g_n^{\eps}(x,dy,\omega)-\tilde g_n^{\eps}(x,dy,\omega)\|_{\mes_{y} L^\infty_x}  \, d\omega \\
& \le C^{n} \| V-\tilde V\|_{B^{\frac12+\sigma}}(\|V\|^{n-1}_{B^{\frac12+\sigma}}+ \|\tilde V\|_{B^{\frac12+\sigma}}^{n-1})
}
uniformly in $\eps>0$.  
\end{proof}

If the potential is small, then we can sum the geometric series which arises in the previous proposition and therefore obtain the structure theorem with explicit bounds in that case. For large potentials we now introduce the Wiener formalism.


\section{Wiener's theorem and the proof of the structure formula}
\label{sec:Wiener}

To set the stage for the  technique of this section, we first recall the following classical result by Norbert Wiener.  It concerns the invertibility problem of $\delta_0+f$ in
the algebra $L^1(\R^{d})$ with unit (we formally adjoin $\delta_0$ to $L^1(\R)$). Here the dimension $d\ge1$ is arbitrary.  
Throughout this section, we let $\chi$ be a Schwartz function with $\hat{\chi}(\xi)=1$ on $|\xi|\le 1$
and $\hat{\chi}(\xi)=0$ on $|\xi|\ge 2$. Then $\int\chi=\hat{\chi}(0)=1$.  Further,  $\chi_{R}(x)=R^{d}\chi(Rx)$, so that $\what{\chi_{R}}=\what{\chi}(R^{-1}\xi)$. We can further assume that $\chi$ is radial. 

\begin{prop}\label{prop:Wiener}
Let $f\in L^1(\R^{d})$. Then there exists $g\in L^1(\R^{d})$ so that 
\EQ{
\label{eq:fghat}
(1+\hat{f}\,)(1+\hat{g})=1 \text{\ \ on\ \ }\R^{d}
}
  if and only of  $1+\hat{f}\ne0$ everywhere. 
Equivalently, there exists $g\in L^1(\R^{d})$ so that 
\EQ{\lb{eq:FP}
(\delta_0+f)\ast (\delta_0+g)=\delta_0
}
 if and only of  $1+\hat{f}\ne0$ everywhere on $\R^{d}$.  The function $g$ is  unique. 
\end{prop}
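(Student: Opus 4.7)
The necessity and uniqueness are immediate from Fourier analysis. Applying $\calF$ to \eqref{eq:FP} and using that $\calF\colon (L^1,\ast)\to (C_0,\cdot)$ is a homomorphism with $\calF\delta_0 = 1$ yields \eqref{eq:fghat}; this forces $1+\hat f$ to be nonvanishing, and pointwise determines $1+\hat g = (1+\hat f)^{-1}$. Uniqueness of $g \in L^1$ then follows from injectivity of the Fourier transform on $L^1$.

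For the main direction, assume $1 + \hat f$ is nonvanishing on $\R^d$. My plan is a localize-and-patch argument inside the commutative unital Banach algebra $A = \C\delta_0 \oplus L^1(\R^d)$. By Riemann--Lebesgue, $\hat f \in C_0(\R^d)$, so there exists $R_0$ with $|\hat f(\xi)| \le 1/2$ for $|\xi| \ge R_0$, giving a neighborhood of infinity on which $(1+\hat f)^{-1}$ is a bounded pointwise function handled by a geometric-series-type inversion. On the compact set $K = \{|\xi| \le R_0\}$, for each $\xi_0 \in K$ I would construct a \emph{local inverse}: an $h_{\xi_0} \in L^1$ and an open neighborhood $U_{\xi_0} \ni \xi_0$ on which $(1+\hat f)(\xi)(1+\widehat{h_{\xi_0}}(\xi))=1$. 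The construction factors $1+\hat f(\xi) = (1+\hat f(\xi_0))\bigl(1 + (\hat f(\xi)-\hat f(\xi_0))/(1+\hat f(\xi_0))\bigr)$, and produces the perturbation term as $\hat\phi$ for some $\phi \in L^1$ of arbitrarily small $L^1$-norm, after multiplying on the Fourier side by a smooth bump centered at $\xi_0$ (i.e.\ convolving $f$ against a modulated $\chi_R$, whose Fourier transform $\hat\chi(\cdot/R)$ serves as the localizer). A Neumann series in the convolution algebra $L^1$ then converges and yields $h_{\xi_0}$.

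By compactness of $K$, finitely many neighborhoods $U_{\xi_1},\dots,U_{\xi_N}$ together with the exterior region $\{|\xi|>R_0\}$ cover $\R^d$; picking a subordinate smooth partition of unity $\{\psi_j\}$ with each $\psi_j \in \calF(L^1)$ (Schwartz bumps suffice), one assembles $1+\hat g := \sum_j \psi_j \cdot (1+\widehat{h_j})$. Pointwise this equals $(1+\hat f)^{-1}$ by the local identities, and the resulting $g$ lies in $L^1$ since $\calF(L^1)$ is closed under multiplication by Schwartz functions on the Fourier side. The main obstacle I anticipate is the local inversion step: one must upgrade \emph{pointwise} continuity of $\hat f$ near $\xi_0$ into genuine $L^1$-norm smallness of a concrete convolution kernel whose Fourier transform realizes the oscillation of $\hat f$ on a small neighborhood. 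This is the content of Wiener's classical localization trick, executed by a Fourier-side cutoff against $\hat\chi(\cdot/R)$ and a Neumann series in the $L^1$ convolution algebra; compactness of $K$ then limits the number of patches to finitely many.
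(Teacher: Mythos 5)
Your overall roadmap is the same as the paper's: handle high frequencies $|\xi|\ge R$ separately, solve locally near each $\xi_0$ in a compact ball, and patch via a smooth partition of unity on the Fourier side. The local inversion and patching steps are described correctly, including the key device of a modulated scaled kernel (what the paper calls $\omega_{\eps,\xi_0}$), the factoring through $1+\hat f(\xi_0)$, and the need to upgrade pointwise behaviour of $\hat f$ into genuine $L^1$-norm smallness of a concrete convolution kernel --- you explicitly flag this last point.

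However, your justification for the high-frequency region is a real gap as written. You invoke Riemann--Lebesgue to obtain $|\hat f(\xi)|\le \tfrac12$ for $|\xi|\ge R_0$ and then appeal to a ``geometric-series-type inversion.'' But a Neumann series $\sum_n (-f_0)^{\ast n}$ converges in the Banach algebra $L^1$ only when $\|f_0\|_{L^1}<1$; pointwise smallness of $\hat f$ gives no control on $\|f\|_{L^1}$, and ``$(1+\hat f)^{-1}$ is bounded and continuous on a neighborhood of infinity'' does not by itself put it in $1 + \mc F(L^1)$. What the argument actually requires --- and what the paper uses --- is the approximate-identity fact
\[
\|f - \chi_R \ast f\|_{L^1} \longrightarrow 0 \quad \text{as } R\to\infty,
\]
a consequence of density of continuous compactly supported functions in $L^1$ (equivalently, $L^1$-continuity of translations), not of Riemann--Lebesgue. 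Setting $f_0 := (\delta_0-\chi_R)\ast f$ with $\|f_0\|_1<\tfrac12$, one has $\hat f_0 = \hat f$ for $|\xi|\ge 2R$, so a Neumann series inversion of $\delta_0 + f_0$ in $L^1$ produces $g_0\in L^1$ solving $(1+\hat f)(1+\hat g_0)=1$ for $|\xi|\ge 2R$. Once you replace the Riemann--Lebesgue step with this input, the remainder of your proof goes through and coincides with the paper's.
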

\begin{proof}
The idea is to find local solutions of \eqref{eq:fghat} and then patch them together using a partition of unity to obtain a single function $g\in L^{1}$. 
First, we will find $g_{0}\in L^{1}$ so that \eqref{eq:fghat} holds for all $|\xi|\ge R$, $R$ large.  We select $R\ge 1$ so large that 
\EQ{\label{eq:L1klein}
\| f - \chi_{R}\ast f\|_{1}= \| (\delta_{0}-\chi_{R})\ast f\|_{1}<\frac12
}
In particular,  $\|(1-\what{\chi_{R}})\hat{f}\|_{\infty}<\frac12$. Set $f_{0}:=(\delta_{0}-\chi_{R})\ast f$ and note that 
\[
(\delta_{0}+f_{0})^{-1} = \delta_{0}- f_{0}+f_{0}\ast f_{0} - f_{0}\ast f_{0}\ast f_{0} + \ldots
\]
as a norm convergent series in $L^{1}$, by \eqref{eq:L1klein}. This means that $(\delta_{0}+f_{0})^{-1} =\delta_{0}+ F_{0}$, $F_{0}\in L^{1}$. 
Define 
\[
g_{0}:= - (\delta_{0}-\chi_{R})\ast f \ast (\delta_{0}+ F_{0}) \in L^{1}
\]
which implies that 
\EQ{\nn
\what{g_{0}}:=  -\frac{(1-\what{\chi_{R}})\hat{f}}{1+ (1-\what{\chi_{R}})\hat{f}}
}
or equivalently
\EQ{\nn
(1+(1-\what{\chi_{R}})\hat{f}) (1+\what{g_{0}}) =1
}
which means that 
\EQ{\label{eq:farsol}
(1+\hat{f}(\xi)) (1+\what{g_{0}}(\xi))=1 \qquad \forall\; |\xi|\ge 2R
}
By construction, $g_{0}\in L^{1}$ is therefore a solution of \eqref{eq:fghat} on $|\xi|\ge 2R$. 

As a second step, we need to find $g_{1}\in L^{1}$ so that 
\EQ{\label{eq:nearsol}
(1+\hat{f}(\xi)) (1+\what{g_{1}}(\xi))=1 \qquad \forall\; |\xi|\le 3R
}
This will then easily finish the proof. Indeed, 
let $\psi_{0},\psi_{1}$ be Schwartz functions with the property that $\psi_{1}(\xi)=1$ if $|\xi|\le 2R$ and $\psi_{1}(\xi)=0$ if $|\xi|>3R$. 
Then set $\psi_{0}=1-\psi_{1}$, and let $\what{\phi_{0}}=\psi_{0}$, $\what{\phi_{1}}=\psi_{1}$.  Then 
\[
g:= \phi_{0}\ast g_{0}+\phi_{1}\ast g_{1} \in L^{1}
\]
solves the full equation \eqref{eq:fghat}. Indeed, \eqref{eq:farsol} and \eqref{eq:nearsol} imply that 
\EQ{\lb{eq:pou}
& 1 = \psi_{0}(\xi) + \psi_{1}(\xi) \\
& = (1+\hat{f}(\xi)) (1+\what{g_{0}}(\xi)) \psi_{0}(\xi) + (1+\hat{f}(\xi)) (1+\what{g_{1}}(\xi)) \psi_{1}(\xi) \\
& =  (1+\hat{f}(\xi)) ( 1 + \psi_{0}(\xi) \what{g_{0}}(\xi) + \psi_{1}(\xi) \what{g_{1}}(\xi) )  = (1+\hat{f}(\xi)) ( 1+ \hat{g}(\xi))
}
for all $\xi\in \R^{d}$. 

To find $g_{1}$ which satisfies \eqref{eq:nearsol}, we solve \eqref{eq:fghat} near any $\xi_{0}\in \R^{d}$ with $|\xi_{0}|\le 3R$. 
As before, we then patch up these local solutions by means of a partition of unity. 
Define, for any $1>\eps>0$,  
\[
\omega_{\eps,\xi_{0}}(x) = e^{ix\cdot\xi_{0}} \eps^{d} \chi(\eps x) 
\] 
or 
\[
\what{\omega_{\eps,\xi_{0}}}(\xi) = \what{\chi}(\eps^{-1}(\xi-\xi_{0}))
\]
We first claim that
\EQ{
\label{eq:claim} 
\sup_{\xi_{0}\in\R^{d}} \big\| f \ast \omega_{\eps,\xi_{0}} - \hat{f}(\xi_{0}) \omega_{\eps,\xi_{0}}\big \|_{1} \to 0\text{\ \ as\ \ }\eps\to0
}
In fact, one has 
\EQ{
& f \ast \omega_{\eps,\xi_{0}} (x) - \hat{f}(\xi_{0}) \omega_{\eps,\xi_{0}} (x) \\
& = \int_{\R^{d}} f(y) e^{i(x-y)\cdot\xi_{0}} \eps^{d} [\chi(\eps(x-y)) - \chi(\eps x) ]\, dy
}
whence
\EQ{
& \big\| f \ast \omega_{\eps,\xi_{0}} (x) - \hat{f}(\xi_{0}) \omega_{\eps,\xi_{0}} (x) \big\|_{L^{1}_{x}} \\
& = \int_{\R^{d}} |f(y)|  \|  \eps^{d} [\chi(\eps(x-y)) - \chi(\eps x) ] \|_{L^{1}_{x}}\, dy \\
&= \int_{\R^{d}} |f(y)|  \|   \chi(\cdot-\eps y) - \chi(\cdot)  \|_{L^{1}_{x}}\, dy 
}
The right-hand side here tends to $0$ as $\eps\to0$ by the Lebesgue dominated convergence theorem, and so \eqref{eq:claim} holds. 
Therefore, we may take $\eps$ small enough such that 
\EQ{\lb{eq:Wiener}
 & (1+\hat{f}(\xi_{0}))\delta_{0} +  f \ast \omega_{\eps,\xi_{0}} - \hat{f}(\xi_{0}) \omega_{\eps,\xi_{0}}   \\
 & = (1+\hat{f}(\xi_{0})) \big[ \delta_{0} +  (1+\hat{f}(\xi_{0}))^{-1} \big( f \ast \omega_{\eps,\xi_{0}} - \hat{f}(\xi_{0}) \omega_{\eps,\xi_{0}}  \big)\big] 
}
is invertible for all $\xi_{0}\in \R^{d}$ (in fact, we only need $|\xi_{0}|\le 3R$). This follows from 
\[
m:=\inf_{\xi_{0}\in \R^{d}} |1+\hat{f}(\xi_{0}) | >0
\]
and so the second term in the bracket of \eqref{eq:Wiener} satisfies 
\EQ{
\sup_{\xi_{0}\in \R^{d}}\big\|  (1+\hat{f}(\xi_{0}))^{-1} \big( f \ast \omega_{\eps,\xi_{0}} - \hat{f}(\xi_{0}) \omega_{\eps,\xi_{0}}  \big) \big\|_{1} \le \frac12
}
for $\eps>0$ small enough.   Fix such an $\eps>0$. Then  for all $\xi_{0}\in \R^{d}$, 
\EQ{\lb{eq:Wiener2}
 & \big[(1+\hat{f}(\xi_{0}))\delta_{0} +  f \ast \omega_{\eps,\xi_{0}} - \hat{f}(\xi_{0}) \omega_{\eps,\xi_{0}}\big]^{-1}   \\
 & = (1+\hat{f}(\xi_{0}))^{-1} \big( \delta_{0} +  H_{\xi_{0}}\big)
}
where $H_{\xi_{0}}\in L^{1}$, $\|H_{\xi_{0}}\|_{1}\le 1$.  Let $\Omega_{\eps,\xi_{0}}$ be defined as 
\[
\what{\Omega_{\eps,\xi_{0}}}(\xi) = \what{\chi}(2\eps^{-1}(\xi-\xi_{0}))
\]
By construction, $\Omega_{\eps,\xi_{0}} \ast   \omega_{\eps,\xi_{0}} = \Omega_{\eps,\xi_{0}}$.  Define
\EQ{\label{eq:schluessel}
g_{\xi_{0}} := -   (1+\hat{f}(\xi_{0}))^{-1} f\ast \Omega_{\eps,\xi_{0}} \ast \big( \delta_{0} +  H_{\xi_{0}}\big) \in L^{1}
}
Then 
\EQ{
\what{g_{\xi_{0}}} &= -\frac{  \hat{f} \,  \what{ \Omega_{\eps,\xi_{0}}} }{  1+\hat{f}(\xi_{0})  + (\hat{f} - \hat{f}(\xi_{0}))\what{\omega_{\eps,\xi_{0}}}   } \\
&= -\frac{  \hat{f} \, \what{ \Omega_{\eps,\xi_{0}}} }{  1+\hat{f}(\xi_{0}) \what{\omega_{\eps,\xi_{0}}} + (\hat{f} - \hat{f}(\xi_{0}))\what{\omega_{\eps,\xi_{0}}}   }\\
& =  -\frac{  \hat{f}   \, \what{ \Omega_{\eps,\xi_{0}}} }{ 1 + \hat{f}\; \what{\omega_{\eps,\xi_{0}}}   }
}
The  fraction in the last line is well-defined since on the support of the numerator the cut-off function in the denominator satisfies $\what{\omega_{\eps,\xi_{0}}} =1$. 
In particular, if $\what{ \Omega_{\eps,\xi_{0}}} (\xi)=1$, then 
\EQ{\label{eq:gxi0}
(1+\hat{f}(\xi)) (1+\what{g_{\xi_{0}}}(\xi))=1
}
In other words, we have solved \eqref{eq:fghat} locally near $\xi_{0}$. 
Covering the ball $|\xi|\le 3R$ by finitely many balls of radius $\eps/2$ and summing up these local solutions by means of a subordinate partition of unity as in \eqref{eq:pou} concludes the proof. To be specific, let $\{\phi_{j}\}_{j=1}^{N}$ be Schwartz functions so that $\sum_{j=1}^{N}\what{\phi_{j}}(\xi)=1$ for all $|\xi|\le 3R$.  Moreover, if $\what{\phi_{j}}(\xi)\ne0$,  then $\what{ \Omega_{\eps,\xi_{j}}} (\xi)=1$ for some $\xi_{j}$ with $|\xi_{j}|\le 3R$.  Now set
\EQ{\label{eq:sumg1}
g_{1}:=   \sum_{j=1}^{N}  \phi_{j} \ast g_{\xi_{j}}
}
By construction, 
\EQ{
1&= \sum_{j=1}^{N}\what{\phi_{j}}(\xi) = \sum_{j=1}^{N}\what{\phi_{j}}(\xi) (1+\hat{f}(\xi)) (1+\what{g_{\xi_{0}}}(\xi)) \\
&= (1+\hat{f}(\xi)) (1+ \what{g_{1}}(\xi))
}wq
if $|\xi|\le 3R$, and so $g_{1}\in L^{1}$ is a solution of  \eqref{eq:nearsol}.
\end{proof}

The main goal in this section is to formulate and apply a version of Proposition~\ref{prop:Wiener} to the algebra~$Y$ from Definition~\ref{def:VB}. 
We noted just below Lemma~\ref{lem:Ynorms} that $Y$ does not have the structure of and $L^{1}_{y}$ convolution algebra taking values in the bounded operators on some Banach space. This prevents us from simply citing the abstract Wiener theorems from~\cite{bec,becgol}.  Assuming that $0$ energy is regular,  we have equation~\eqref{2.37}, viz.
\be\nn
(I + T_{1+}^\eps) \oast (I - T_+^\eps) = (I - T_+^\eps) \oast (I + T_{1+}^\eps) = I
\ee
This holds in the algebra $Z$, uniformly in $\eps\ge0$, see Lemma~\ref{lm2.4}.  This guarantees that $(I + T_{1+}^\eps)^{-1}=I - T_+^\eps$ in~$Z$. We now wish to show that this relation also holds in~$Y$ and this requires a Wiener theorem. For this it is natural that we begin by taking the Fourier transform relative to the variable~$y$ of~\eqref{2.37}. $T_{1+}^{\eps}$ with $\eps=0$ refers to the limit $\eps\to0+$. 

\begin{lemma}
\lb{lem:FTinvert}
Let $V\in  B^{\frac12}$ and assume that $0$ energy is regular of $H=-\Delta+V$. Then for any $\eta\in \R^3$, the operator $I+\what{T^\eps_{1+}}(\eta)$ 
is invertible in $\B(L^\infty)$ (the bounded operators on $L^\infty$)
and 
\EQ{ \label{eq:Inv0}
\sup_{\eps\ge0}\sup_{\eta\in\R^3} \big\| (I+\what{T^\eps_{1+}}(\eta))^{-1}\big\|_{\B(L^\infty)} <\infty
}
Moreover,  in $\B(L^\infty)$ one has the identity 
\EQ{\label{eq:Inv1}
(I+\what{T^\eps_{1+}}(\eta))^{-1} = I - \what{T^\eps_+}(\eta) \qquad\forall\; \eta\in \R^3
}
The second term on the right-hand side satisfies  
\EQ{ \label{eq:Inv2}
\sup_{\eps\ge0}\sup_{\eta\in\R^3} \big\| \what{T^\eps_{+}}(\eta) \big\|_{\B(V^{-1}B,L^\infty)} <\infty
}
where $V^{-1}B$ is defined with  any $\sigma\ge\frac12$.   In fact, with $M_{0}$ as in \eqref{eq:Res Bd}, 
\EQ{ \label{eq:Inv2*}
\sup_{\eps\ge0}\sup_{\eta\in\R^3} \big\| \what{T^\eps_{+}}(\eta) \big\|_{\mc FY}  \les \|V\|_{B^{\sigma}}  M_{0}
}
If $V\in B^{\sigma}$, $\frac12<\sigma<1$, then uniformly in $\eps>0$, the map $\eta\mapsto \what{T_{1+}^{\eps}}(\eta)$ is uniformly H\"older continuous as a map $\R^{3}\to\B(V^{-1}B,L^{\infty})$,
and therefore also as a map $\R^{3}\to\mc FY=\B(L^{\infty})\cap \B(V^{-1}B)$. Quantitatively speaking, one has 
\EQ{
\label{eq:Hoelder}
\| \what{T_{1+}^{\eps}}(\eta) - \what{T_{1+}^{\eps}}(\tilde\eta) \|_{\mc FY}\les |\eta-\tilde\eta|^{\rho} \|V\|_{B^{\sigma}}
}
where $0<\rho=\sigma-\frac12$. 
\end{lemma}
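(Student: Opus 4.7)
The plan is to lift the $L^{\infty}$-based limiting absorption principle from Lemma~\ref{lemma2.3} to the algebra $\mc FY$ via conjugation by the $L^{\infty}$-isometries $M_{\pm\eta}f(x) := e^{\pm ix\cdot\eta}f(x)$, and then to upgrade with Lemma~\ref{lem:K1}. Formula~\eqref{2.90} rewrites $\what{T_{1+}^{\eps}}(\eta)$ as $M_{-\eta}\,R_0(|\eta|^{2}-i\eps)V\,M_{\eta}$, so that $I+\what{T_{1+}^{\eps}}(\eta)=M_{-\eta}(I+R_0(|\eta|^{2}-i\eps)V)M_{\eta}$. Lemma~\ref{lemma2.3} together with~\eqref{eq:Res Bd} then delivers~\eqref{eq:Inv0} with constant $M_0$. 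The resolvent identity~\eqref{2.29}, in the form $(I+R_0V)^{-1}=I-R_VV$, conjugated back by $M_{\pm\eta}$ and compared with~\eqref{eqn2.13}, yields~\eqref{eq:Inv1}.

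Next, rewriting~\eqref{eq:Inv1} as $\what{T_+^{\eps}}(\eta)=(I+\what{T_{1+}^{\eps}}(\eta))^{-1}\what{T_{1+}^{\eps}}(\eta)$ reduces~\eqref{eq:Inv2} to a bound on $\what{T_{1+}^{\eps}}(\eta)$. I would invoke Lemma~\ref{lem:K1},~\eqref{eq:Kest1f}: contracting $T_{1+}^{\eps}$ against $f\in V^{-1}B$ produces $\wtil K_{1+}^{\eps}$ with $\|\wtil K_{1+}^{\eps}\|_{L^{\infty}_{x}L^{1}_{y}}\les\|Vf\|_{B^{1/2}}\le\|f\|_{V^{-1}B}$ (using $\sigma\ge\tfrac12$); the pointwise bound $|\mc F_{y}\wtil K_{1+}^{\eps}(x,\eta)|\le\|\wtil K_{1+}^{\eps}(x,\cdot)\|_{L^{1}_{y}}$ then yields $\|\what{T_{1+}^{\eps}}(\eta)f\|_{\infty}\les\|f\|_{V^{-1}B}$. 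Composing with the $L^{\infty}$-inverse, bounded by $M_0$, gives~\eqref{eq:Inv2}. For~\eqref{eq:Inv2*}, the embedding $L^{\infty}\embed V^{-1}B$ with constant $\|V\|_{B^{\sigma}}$ converts this to a $\B(L^{\infty})$ bound, while $\|Vg\|_{B^{\sigma}}\le\|V\|_{B^{\sigma}}\|g\|_{\infty}$ applied to $g=\what{T_+^{\eps}}(\eta)f\in L^{\infty}$ converts it to a $\B(V^{-1}B)$ bound.

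For the Hölder estimate~\eqref{eq:Hoelder}, I would work pointwise on the kernel. Using~\eqref{2.90}, the kernel of $\what{T_{1+}^{\eps}}(\eta)-\what{T_{1+}^{\eps}}(\tilde\eta)$ is a difference of exponentials of the form $e^{i\sqrt{\mu(\eta)}|x_{0}-x_{1}|+i(x_{0}-x_{1})\cdot\eta}$ with $\mu(\eta)=|\eta|^{2}-i\eps$, divided by $4\pi|x_{0}-x_{1}|$ and weighted by $V(x_{0})$. The crucial ingredient is the Lipschitz estimate $|\sqrt{\mu(\eta)}-\sqrt{\mu(\tilde\eta)}|\le|\eta-\tilde\eta|$ uniformly in $\eps\ge0$, which follows from $\Re\sqrt{\mu(\eta)}\ge|\eta|$ together with the factorization $\mu_{1}-\mu_{2}=(\sqrt{\mu_{1}}-\sqrt{\mu_{2}})(\sqrt{\mu_{1}}+\sqrt{\mu_{2}})$. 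Combined with $|e^{i\alpha}-e^{i\beta}|\le\min(2,|\alpha-\beta|)$ applied along a segment in $\{\Im\le0\}$, this bounds the kernel difference pointwise by $|V(x_{0})||x_{0}-x_{1}|^{-1}\min(2,|x_{0}-x_{1}||h|)$ where $h=\eta-\tilde\eta$. Interpolating $\min(2,s)\le 2s^{\rho}$ with $\rho=\sigma-\tfrac12$ yields the kernel bound $|h|^{\rho}|V(x_{0})||x_{0}-x_{1}|^{\rho-1}$. Young's inequality~\eqref{eq:Y2} with $|\cdot|^{\rho-1}\in L^{3/(1-\rho),\infty}$, combined with the embedding $B^{1/2+\rho}\embed L^{3/(2+\rho),1}$ from Lemma~\ref{lem:Balpha interpol}, controls the resulting convolution by $\|V\|_{B^{\sigma}}$ when tested against $f\in L^{\infty}$ and by $\|Vf\|_{B^{\sigma}}=\|f\|_{V^{-1}B}$ when tested against $Vf\in B^{\sigma}$. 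This delivers both the $\B(L^{\infty})$ and $\B(V^{-1}B)$ bounds exactly as in the previous paragraph, yielding~\eqref{eq:Hoelder}.

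The main obstacle is the branch-cut analysis of $\sqrt{|\eta|^{2}-i\eps}$ required to secure the Lipschitz bound uniformly in $\eps\ge0$, particularly near $\eta=0$; the remaining steps are routine applications of the machinery assembled in Sections~\ref{sec:Lpq}--\ref{sec:FA}.
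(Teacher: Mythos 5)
Your proposal is correct and follows essentially the same route as the paper: invert via the $M_{\pm\eta}$-conjugation and the $L^\infty$ limiting absorption principle from Lemma~\ref{lemma2.3}, transfer to $\mc FY$ through the embedding $L^\infty\embed V^{-1}B$ and the trivial multiplier bound $\|Vg\|_{B^\sigma}\le\|V\|_{B^\sigma}\|g\|_\infty$, and prove the Hölder estimate by a pointwise kernel bound of the schematic form $|V(x_0)|\,|x_0-x_1|^{\rho-1}|h|^\rho$ closed with Lemma~\ref{lem:Balpha interpol} and~\eqref{eq:Ho2}. The one small deviation worth noting is that you combine the resolvent phase and the conjugation phases $e^{\pm ix\cdot\eta}$ into a single exponent that is Lipschitz with constant $|x_0-x_1|$, which produces the factor $|x_0-x_1|^{\rho-1}$ directly; the paper instead differences the resolvent kernel and the multipliers $M_{\pm\eta}$ separately and then absorbs the resulting $|x_0|^\rho$ (and $|x_1|^\rho\les|x_0|^\rho+|x_0-x_1|^\rho$) weights into $V$ — both yield the same estimate, and your combined-phase version avoids that last bookkeeping step.
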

\begin{proof}
By equation \eqref{2.29}
\[
R_V(z)=(I+R_0(z)V)^{-1}R_0(z) \qquad\forall\; \Im z>0
\]
By Lemma~\ref{lemma2.3},
\[
\sup_{\eps\ge0}\sup_{\eta\in\R^3} \big\| (I+R_0(|\eta|^2\pm i\eps )V  )^{-1}\big\|_{\B(L^\infty)} <\infty
\]
Since $R_0(|\eta|^2\pm i\eps ): L^{\frac32,1}\to L^\infty$ uniformly in $\eta,\eps$ and $\dot B^{\frac12}\embed L^{\frac32,1}$ this implies that 
\[
\sup_{\eps\ge0}\sup_{\eta\in\R^3} \big\| R_V(|\eta|^2\pm i\eps ) V\big\|_{\B(V^{-1}B,L^\infty)} <\infty
\]
Let $(M_\eta f)(x) = e^{-i\eta\cdot x}f(x)$. Then by Lemma~\ref{lem:FTe}, 
\EQ{\label{eq:TTT1}
\what{T^\eps_{1+}}(\eta) = M_\eta^{-1} R_0(|\eta|^2-i\eps)V M_\eta,\qquad \what{T^\eps_{+}}(\eta) = M_\eta^{-1} R_V(|\eta|^2-i\eps)V M_\eta
}
Passing to the Fourier transform of \eqref{2.37} therefore yields 
\EQ{\nn
(I + \what{T^\eps_{1+}}(\eta) )\circ (I- \what{T^\eps_{+}}(\eta)) &= I\\ 
 (I + \what{T^\eps_{1+}}(\eta) )^{-1} &= I - M_\eta^{-1} R_V(|\eta|^2-i\eps)V M_\eta
 }
as equations in $\B(L^\infty)$, and so \eqref{eq:Inv1}, \eqref{eq:Inv2} hold. 

For the uniform continuity compute
\EQ{\label{eq:R0diff}
&|[R_0(|\eta|^2-i\eps)V-R_0(|\tilde\eta|^2-i\eps)V](x_0,x_1) | \\
&\les  \frac{\min(1,|\eta-\tilde\eta||x_0-x_1|)}{|x_0-x_1|}|V(x_0)|  \\
&\les |\eta-\tilde\eta|^\rho |x_0-x_1|^{-1+\rho} |V(x_0)| 
}
where we take $\rho=\sigma-\frac12\in (0,1)$. By Lemma~\ref{lem:Balpha interpol},  $B^\sigma\embed L^{\frac{3}{2+\rho},1}$,  and $|x|^{-1+\rho}\in L^{\frac{3}{1-\rho},\infty}=(L^{\frac{3}{2+\rho},1})^{*}$, we conclude by means of~\eqref{eq:Ho2} that 
\[
\| R_0(|\eta|^2-i\eps)V-R_0(|\tilde\eta|^2-i\eps)V \|_{\B(V^{-1}B, L^\infty)} \les  |\eta-\tilde\eta|^\rho
\]
The second line in \eqref{eq:R0diff} follows from, with $a>0$, and uniformly in $\eps>0$, 
\EQ{
& \big| e^{-ia\sqrt{|\eta|^2-i\eps}} - e^{-ia\sqrt{|\tilde \eta|^2-i\eps}} \big | \\
&\le \min(2,  a |\sqrt{|\eta|^2-i\eps} - \sqrt{|\tilde \eta|^2-i\eps}|) \\
& \le 2  \min\Big( 1, a\frac{||\eta|^2-|\tilde\eta|^2|}{ |\sqrt{|\eta|^2-i\eps} + 
\sqrt{|\tilde\eta|^2-i\eps}|}\Big) \le 2 \min(1, a|\eta-\tilde \eta|)
}
Here $\Im \sqrt{|\eta|^2-i\eps}<0$, $\Im \sqrt{|\tilde\eta|^2-i\eps}<0$.  

In view of~\eqref{eq:TTT1}, we next need to bound the differences involving the terms $M_{\eta}$ as $\eta$ changes. Thus, 
\EQ{ 
|(M_{\eta}f-M_{\tilde\eta}f)(x)| & \le \min(2,|\eta-\tilde\eta||x|)|f(x)| \\
&\le 2 |\eta-\tilde\eta|^{\rho}|x|^{\rho} \, |f(x)|
}
We absorb the $|x_{0}|^{\rho}$ factor  into $|V(x_{0})|$. 
For the exterior operator $M_{\eta}^{-1}$ acting in the variable~$x_{1}$, we write $|x_{1}|^{\rho}\les |x_{0}|^{\rho} + |x_{1}-x_{0}|^{\rho}$. 
The first term is passed onto $V$, whereas the second is absorbed as in~\eqref{eq:R0diff}. 

To summarize, 
\[
\| \what{T^\eps_{1+}}(\eta) f - \what{T^\eps_{1+}}(\tilde \eta) f\|_{L^{\infty}} \le C|\eta-\tilde\eta|^{\rho} \|f\|_{V^{-1}B}
\]
which establishes uniform continuity. 
\end{proof}

We now state the Wiener theorem  in  the  algebra~$Y$.   The conditions \eqref{eq:Sass1}, \eqref{eq:Sass2} in the following proposition are precisely 
the two properties of $L^{1}$ functions that made the proof of the scalar Wiener's theorem above work (here $S^{N}$ denotes $N$-fold composition of $S$ with itself using~$\oast$, and $\chi$ is the function from above).  Indeed, \eqref{eq:L1klein} corresponds to~\eqref{eq:Sass1} with $\eps=R^{-1}$, and \eqref{eq:Sass2} will allow us to obtain an analogue of~\eqref{eq:claim}. This is natural, as~\eqref{eq:Sass2} localizes in $y$ and
therefore regularizes in~$\eta$ which makes the essential discretization property in~$\eta$ possible, cf.~\eqref{eq:sumg1}.  Throughout, the standard convolution symbol $\ast$ means convolution relative to the $y$-variable.  Finally, the pointwise invertibility condition of the Fourier transform is modeled after Lemma~\ref{lem:FTinvert}.

\begin{prop}
\lb{prop:YWiener}
Let $V\in B^{\sigma}$ where $\frac12\le \sigma<1$, and define the algebra $Y$ with this value of $\sigma$. 
Suppose that $S\in Y$ satisfies, for some $N\ge1$
\begin{align}
\label{eq:Sass1}
\lim_{\eps\to0} \|\eps^{-3}{\chi}(\cdot/\eps)\ast S^{N} - S^{N}\|_{Y} &= 0  \\
\lim_{L\to\infty} \|(1-\hat{\chi}(y/L))S(y)\|_{Y} &=0\label{eq:Sass2}
\end{align}
Assume that  $I+\hat{S}(\eta)$ has an inverse in $\B(L^\infty)$ of the form $(I+\hat{S}(\eta))^{-1}= I + U(\eta)$, with  $U(\eta)\in \mc FY$ for all $\eta\in\R^{3}$, and uniformly so, i.e., 
\EQ{
\lb{eq:Ubd}
\sup_{\eta\in\R^3} \|U(\eta)\|_{\mc FY}<\infty 
}
Finally, assume that $\eta\mapsto \hat{S}(\eta)$ is uniformly continuous as a map $\R^{3}\to \B(L^{\infty})$. 
 Then it follows that $I+S$ is invertible in~$Y$ under~$\oast$. 
\end{prop}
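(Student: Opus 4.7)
The plan is to adapt the scalar proof of Proposition~\ref{prop:Wiener} to the algebra~$Y$, using the high/low frequency split in~$\eta$. By Lemma~\ref{yalg}, $Y$ is a Banach algebra; scalar $y$-convolutions and constants $M\in\mathcal{F}Y$ act as multipliers of $Y$ via Lemma~\ref{lem:6prop}~v) and~vi). The target is to produce $T\in Y$ with $\widehat T(\eta)=U(\eta)$ for every~$\eta$; then $(I+S)\oast(I+T)-I$ has vanishing $y$-Fourier transform, hence vanishes as a tempered distribution, which proves that $I+S$ is invertible in $Y$ with inverse $I+T$.

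\emph{High-frequency inverse.} Let $\chi_R(y):=R^3\chi(Ry)$ and $Q_R:=\delta_0-\chi_R$; the scalar $y$-convolution $Q_R$ commutes with $\oast$ and $\|Q_R\|_{\B(Y)}\le 1+\|\chi\|_1$ uniformly in~$R$. Hence
\[
\|(Q_R\oast S)^N\|_Y=\|Q_R^N\oast S^N\|_Y\le(1+\|\chi\|_1)^{N-1}\|Q_R\oast S^N\|_Y\xrightarrow{R\to\infty}0
\]
by~\eqref{eq:Sass1}. Fix $R$ large enough that this is $<1/2$. The Banach algebra identity $(I+A)\sum_{k=0}^{N-1}(-A)^k=I-(-1)^N A^N$ with $A=Q_R\oast S$, combined with Neumann inversion of $I-(-1)^N A^N$ in $Y$, gives $(I+Q_R\oast S)^{-1}=I+T_{\mathrm{high}}\in Y$. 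Since $\widehat{Q_R}(\eta)=1$ on $\{|\eta|\ge 2R\}$, one has $\widehat{T_{\mathrm{high}}}(\eta)=U(\eta)$ on that set.

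\emph{Approximation lemma.} The key technical step is
\[
\lim_{\eps\to 0}\sup_{\eta_0\in\R^3}\|S\oast\omega_{\eps,\eta_0}-\widehat S(\eta_0)\,\omega_{\eps,\eta_0}\|_Y=0,\qquad \omega_{\eps,\eta_0}(y):=e^{iy\cdot\eta_0}\eps^3\chi(\eps y).
\]
The $Z$-component is immediate from uniform continuity of $\widehat S:\R^3\to\B(L^\infty)$, since on the Fourier side the difference equals $(\widehat S(\eta)-\widehat S(\eta_0))\widehat{\omega_{\eps,\eta_0}}(\eta)$, supported in $|\eta-\eta_0|\le 2\eps$. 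The operator-norm component rests on the physical-side expansion
\[
\int S(x_0,x_1,y')e^{i(y-y')\eta_0}\eps^3\bigl[\chi(\eps(y-y'))-\chi(\eps y)\bigr]\,dy'.
\]
Splitting $S=\widehat\chi(y/L)S+(1-\widehat\chi(y/L))S$, the tail contribution is controlled by~\eqref{eq:Sass2}, while for $|y'|\le 2L$ one uses $\|\chi(\cdot-\eps y')-\chi\|_1\le C\eps|y'|$ and then takes $\eps\ll L^{-1}$. Coupling~\eqref{eq:Sass2} with $L^1$-continuity of translations of $\chi$ in this way is the main technical obstacle.

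\emph{Local inversion and patching.} For each $\eta_0$ set $C_{\eta_0,\eps}:=S\oast\omega_{\eps,\eta_0}-\widehat S(\eta_0)\omega_{\eps,\eta_0}\in Y$ and $B_{\eta_0,\eps}:=(I+U(\eta_0))\circ C_{\eta_0,\eps}\in Y$, using Lemma~\ref{lem:6prop}~vi) and~\eqref{eq:Ubd}. Fix $\eps$ small so that $\sup_{\eta_0}\|B_{\eta_0,\eps}\|_Y<1/2$; Neumann then yields $(I+B_{\eta_0,\eps})^{-1}=I+H_{\eta_0}\in Y$. A direct Fourier computation using $(I+U(\eta_0))=(I+\widehat S(\eta_0))^{-1}$ shows that
\[
\widehat{(I+H_{\eta_0})\circ(I+U(\eta_0))}(\eta)=(I+\widehat S(\eta))^{-1}\qquad\text{for }|\eta-\eta_0|\le \eps/2.
\]
Cover $\{|\eta|\le 3R\}$ by finitely many balls $B(\eta_j,\eps/2)$ and choose $\phi_j\in L^1_y$ whose Fourier transforms $\widehat{\phi_j}$ form a subordinate partition of unity (supported in the respective balls and summing to $1$ on $\{|\eta|\le 3R\}$). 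Then
\[
T_{\mathrm{low}}:=\sum_j\bigl[\phi_j(y)\,U(\eta_j)+\phi_j\oast H_{\eta_j}\circ(I+U(\eta_j))\bigr]\in Y
\]
by Lemma~\ref{lem:6prop}~v), vi), and one checks $\widehat{T_{\mathrm{low}}}(\eta)=U(\eta)$ on $\{|\eta|\le 3R\}$. Finally, take $\varphi\in L^1_y$ with $\widehat\varphi=1$ on $\{|\eta|\le 2R\}$ and $\supp\widehat\varphi\subset\{|\eta|\le 3R\}$, and set $T:=T_{\mathrm{high}}+\varphi\oast(T_{\mathrm{low}}-T_{\mathrm{high}})\in Y$: a case analysis on $|\eta|\le 2R$, $|\eta|\ge 3R$, and the overlap, using $\widehat{T_{\mathrm{high}}}=\widehat{T_{\mathrm{low}}}=U$ on the appropriate regions, gives $\widehat T(\eta)=U(\eta)$ for every~$\eta$, finishing the proof.
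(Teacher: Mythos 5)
Your proposal follows the same strategy as the paper's proof, step for step: the high-frequency inverse on $|\eta|\ge 2R$ via $(I+Q_R\oast S)^{-1}$ and the geometric factorization through $S^N$; the approximation lemma $\sup_{\eta_0}\|S\oast\omega_{\eps,\eta_0}-\hat S(\eta_0)\omega_{\eps,\eta_0}\|_Y\to0$ proved by splitting the $Y$-norm into its $Z$-part (uniform continuity of $\hat S$) and its operator part (reduce to compact $y$-support via~\eqref{eq:Sass2}, then the $L^1$ translation estimate for $\chi$); the local inverse $I+H_{\eta_0}$ from the Neumann series for $I+(I+U(\eta_0))\circ\calD$; and patching with a partition of unity subordinate to an $\eps/2$-cover of $\{|\eta|\le 3R\}$. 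Minor cosmetic differences: you package the final gluing of high/low pieces with a single cutoff $\varphi$ and deduce invertibility from $\hat T(\eta)=U(\eta)$ pointwise, while the paper constructs the left and right inverses separately and identifies them — but these are equivalent and rest on the same pointwise Fourier computation.
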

\begin{proof}
We need to construct $\calL\in Y$ with the property that 
\EQ{\label{eq:SL}
(I+\what{\calL}(\eta))\circ (I+\hat{S}(\eta))=I\qquad \forall\; \eta\in\R^{3}
}
For $|\eta|\ge 2R$ this is the same as 
\EQ{\nn
(I+\what{\calL}(\eta)) \circ (I+\hat{S}(\eta)-\hat{\chi}(\eps\eta)\hat{S}(\eta))=I
}
with $\eps=R^{-1}$.  Taking $R$ so large that 
\EQ{\label{eq:Rgross}
\| \eps^{-3}{\chi}(\cdot/\eps)\ast S^{N} - S^{N}\|_{Y}< 2^{-N}
}
by \eqref{eq:Sass1}, we can write with $\mu_{\eps}:=-\delta_{0}+\eps^{-3}{\chi}(\cdot/\eps)$, 
\EQ{
(I - \mu_{\eps} \ast S)^{-1} &= (I - \mu_{\eps}^{N} \ast S^{N})^{-1} \oast \big(I+\sum_{\ell=1}^{N-1} \mu_{\eps}^{\ell} \ast S^{\ell}\big) \\
&= \Big(I+\sum_{n=1}^{\infty} \mu_{\eps}^{nN} \ast S^{nN}\Big) \oast \big(I+\sum_{\ell=1}^{N-1} \mu_{\eps}^{\ell} \ast S^{\ell}\big)
}
and the infinite series converges in $Y$ since  $\|\mu_{\eps}\|_{Y}\le 2$ and 
\[
\|\mu_{\eps}^{N} \ast S^{N}\|_{Y} \le \|\mu_{\eps}\|_{Y}^{N-1} \| \mu_{\eps}\ast S^{N}\|_{Y} \le 2^{N-1} 2^{-N} =\frac12
\]
Then 
\EQ{\label{eq:Laussen}
\calL:= \Big(I+\sum_{n=1}^{\infty} \mu_{\eps}^{nN} \ast S^{nN}\Big) \oast \big(I+\sum_{\ell=1}^{N-1} \mu_{\eps}^{\ell} \ast S^{\ell}\big)
-I \in Y
}
has the property that \eqref{eq:SL} holds for $|\eta|\ge 2R$.   Note that the scalar convolution does act commutatively relative to~$\oast$, which itself is not commutative (due to the non-commutativity of operator composition). 

\smallskip

Using the same patching method as in the scalar Wiener theorem above, it suffices to construct a local solution of \eqref{eq:SL} on  $|\eta|\le 3R$.
As in the  proof of the scalar Wiener theorem,  for any $1>\eps>0$,  $\eta_{0}\in\R^{3}$, 
\EQ{\nn 
\omega_{\eps,\eta_{0}}(x) &= e^{iy\cdot\eta_{0}} \eps^{3} \chi(\eps y),\quad 
\what{\omega_{\eps,\eta_{0}}}(\eta) = \what{\chi}(\eps^{-1}(\eta-\eta_{0})) 
}
We claim that
\EQ{
\label{eq:claimS} 
\sup_{\eta_{0}\in\R^{3}} \big\| S \ast \omega_{\eps,\eta_{0}} - \hat{S}(\eta_{0}) \omega_{\eps,\eta_{0}}\big \|_{Y} \to 0\text{\ \ as\ \ }\eps\to0
}
By Lemma~\ref{lem:6prop}, property ii), 
\[
\|S \ast \omega_{\eps,\eta_{0}}\|_{Y}\le \|S\|_{Y}\|\omega_{\eps,\eta_{0}}\|_{1} \les \|S\|_{Y}
\]
uniformly in $\eps>0$, $\eta_{0}\in\R^{3}$. 
By properties iv) and v) of the same lemma, 
\EQ{\nn
 \|\hat{S}(\eta_{0}) \omega_{\eps,\eta_{0}}\big \|_{Y} &\les \|\hat{S}(\eta_{0})\|_{\mc FY}  \|\omega_{\eps,\eta_{0}}\|_{1}  \les \|S\|_{Y}
}
To prove the claim,  we may therefore assume that $S(y)=0$ if $|y|\ge L$ for some $L$, using \eqref{eq:Sass2}. 
With this in mind, we compute 
\EQ{\lb{eq:calDdef}
\calD &:= (S\ast \omega_{\eps,\eta_{0}}) (y)(x_{0},x_{1}) - \hat{S}(\eta_{0})(x_{0},x_{1}) \omega_{\eps,\eta_{0}} (y) \\
& = \int_{\R^{3}} S(u)(x_{0},x_{1}) e^{i(y-u)\cdot\eta_{0}} \eps^{3} [\chi(\eps(y-u)) - \chi(\eps y) ]\, du
}
We begin estimating the $\|\calD\|_{Z}$ term in the $Y$-norm. Thus, 
\EQ{\label{eq:DZ}
\| \calD\|_{Z} &= \sup_{\eta} \| \mc F_{y} \calD(x_{0},x_{1},\eta)\|_{L^{\infty}_{x_{1}}L^{1}_{x_{0}}} \\
& = \sup_{\eta} \big\| (\hat{S}(\eta)-\hat{S}(\eta_{0}))(x_{0},x_{1}) \what{\omega_{\eps,\eta_{0}}}(\eta) \big\|_{\B(L^{\infty)}}
}
By assumption of uniform continuity of $\hat{S}(\eta)$ as a map from $\R^{3}$ to $\B(L^{\infty})$ this tends to $0$ uniformly in $\eta_{0}$ as $\eps\to0$. 

 Next,  assume that $\|f\|_{V^{-1}B}\le 1$. With $f\calD$ referring  to the contraction of $\calD$ by $f$ in the $x_{0}$-variable,
 \EQ{\lb{eq:832}
\| f\calD\|_{X_{x_{1},y}}  &\les \int_{\R^{3}} \|(fS)(u)(x_{1})\|_{V^{-1}B_{x_{1}}}   \eps^{3} |\chi(\eps(y-u)) - \chi(\eps y) |\,dy du \\
&\les \eps L  \sup_{x_{1}} \int_{\R^{3}}\|(fS)(u)(x_{1})\|_{V^{-1}B_{x_{1}}}  \, du \les \eps L \| fS\|_{X_{x_{1},y}}\\
&\les \eps L \|S\|_{\B(V^{-1}B_{x_{0}}, X_{x_{1},y} )} \les \eps L \|S\|_{Y}. 
}
This concludes the proof of our claim~\eqref{eq:claimS}. 

Fix some $\eta_{0}\in\R^{3}$, and define $H_{\eta_{0}}\in Y$ via the relation
\EQ{
\lb{eq:Heta0}
[I+\calD+U(\eta_{0})\circ \calD]^{-1} = I+H_{\eta_{0}}
}
where $U(\eta_{0})$ is as in the statement of the proposition. Note that by Lemma~\ref{lem:6prop}, property~vi), the composition $U(\eta_{0})\circ \calD\in Y$. 
By \eqref{eq:Ubd} and~\eqref{eq:claimS} there exists $\eps>0$ so that 
irrespective of the choice of~$\eta_{0}$ we have 
\EQ{\label{eq:U0calD}
\|\calD+U(\eta_{0})\circ \calD\|_{Y}<\frac12
}
which guarantees that $H_{\eta_{0}}\in Y$ is well-defined from~\eqref{eq:Heta0} via a Neumann series. Moreover, $\|  H_{\eta_{0}}\|_{Y}\le 1$.  The significance of $H_{\eta_{0}}$ lies with the following property: if $\what{\omega_{\eps,\eta_{0}}}(\eta)=1$, then 
\EQ{
\label{eq:bend}
(I+\hat{S}(\eta)) \circ (I+\widehat{H_{\eta_{0}}}(\eta))=I+\hat{S}(\eta_{0}) \text{\ \ in \ \ } \B(L^{\infty})
}
In fact, \eqref{eq:Heta0} is equivalent with the equation in $Y$
\EQ{
 [I+\calD+U(\eta_{0})\circ \calD]\oast (I+H_{\eta_{0}} ) =I
}
Taking the Fourier transform in $y$ yields
\EQ{\lb{eq:UH1}
 (I+\hat{\calD} + U(\eta_{0})\circ \hat{\calD}) \circ (I+\what{H_{\eta_{0}}}) = I  \text{\ \ in \ \ } \B(L^{\infty})
}
By assumption that $(I+\hat{S}(\eta_{0}))^{-1}=I+U(\eta_{0})$ in $\B(L^{\infty})$, \eqref{eq:UH1} is the same as 
\EQ{\lb{eq:UH2}
 (I+\hat{S}(\eta_{0}) + \hat{\calD}(\eta)) \circ (I+ \what{H_{\eta_{0}}})= I+\hat{S}(\eta_{0})  \text{\ \ in\ \ }\B(L^{\infty})
}
If $\what{\omega_{\eps,\eta_{0}}}(\eta)=1$, then \eqref{eq:calDdef} implies that \eqref{eq:UH2} is the same as
\[
 (I+\hat{S}(\eta)) \circ (I+ \what{H_{\eta_{0}}}(\eta)) = I+\hat{S}(\eta_{0})
\]
which is \eqref{eq:bend}. 
Next, define 
\EQ{\lb{eq:cLeta0}
\calL_{\eta_{0}} := U(\eta_{0})\omega_{\eps,\eta_{0}} + H_{\eta_{0}} +  H_{\eta_{0}}  \circ U(\eta_{0}) \in Y
}
Then $\what{\omega_{\eps,\eta_{0}}}(\eta)=1$ implies that 
\EQ{\nn
\what{\calL_{\eta_{0}}} (\eta) &= U(\eta_{0})+\what{H_{\eta_{0}}}(\eta) +  \what{H_{\eta_{0}}}(\eta) \circ U(\eta_{0}) \\
&=  (I+\what{H_{\eta_{0}}}(\eta) )\circ (I+U(\eta_{0})) - I  \\
&= (I+\what{H_{\eta_{0}}}(\eta) ) \circ (I+\hat{S}(\eta_{0}))^{-1}  - I
}
which further gives
\[
(I+\what{\calL_{\eta_{0}}}(\eta)) \circ (I+\hat{S}(\eta_{0}))= I+\what{H_{\eta_{0}}}(\eta)
\]
for all $\eta$ near $\eta_{0}$. Using~\eqref{eq:bend}  we infer that 
\[
(I+\what{\calL_{\eta_{0}}}) \circ (I+\hat{S})= I
\]
which solves \eqref{eq:SL} near $\eta_{0}$. Note that the size of the neighborhood is uniform in $\eta_{0}$. 
Using a partition of unity as we did towards the end of the proof of the scalar Wiener theorem, we can patch up these local solutions to a solution on the ball $|\eta|\le 3R$.    

Thus, we have constructed a left inverse, i.e., $\calL\in Y$ with 
$$
(I+\calL)\oast (I+S) = I
$$
In the same fashion we can construct a right inverse, $\wtil \calL\in Y$ with $$(I+S)\oast (I+\wtil \calL) =I$$
But then 
$$
I+\calL=(I+\calL)\oast (I+S) \oast (I+\wtil \calL) = I+\wtil \calL
$$ 
whence $\calL=\wtil\calL$. So $I+S$ is invertible  in $Y$, as claimed. 
\end{proof}

The proof of this Wiener theorem implies the following quantitative version, by means of which we can control the norm of the inverse. 

\begin{cor}
\lb{cor:YWiener}
Under the same hypotheses  as in Proposition~\ref{prop:YWiener},  we let 
\EQ{
\lb{eq:Ubd*}
M_{1}:=1+\sup_{\eta\in\R^3} \|U(\eta)\|_{\mc FY}  
}
Let $0<\eps_{0}<1$, and $L_{0}>1$ satisfy
\begin{align}
\| \eps_{0}^{-3}{\chi}(\cdot/\eps_{0})\ast S^{N} - S^{N}\|_{Y} &< 2^{-N} \label{eq:Bed1} \\
 \|(1-\hat{\chi}(y/L_{0}))S(y)\|_{Y} &  \le cM_{1}^{-1}   \label{eq:Bed2} 
\end{align}
where $c$ is a suitable small absolute constant. Furthermore, let $1>\eps_{1}>0$  be such that 
\EQ{\lb{eq:Bed3}
 \sup_{|\eta-\eta_{0}|\le \eps_{1}}\big\| \hat{S}(\eta)-\hat{S}(\eta_{0})  \big\|_{\B(L^{\infty)}} & \le cM_{1}^{-1}
}
 Then the inverse of $I+S$  in~$Y$ exists and satisfies
 \EQ{\label{eq:Sinvnorm}
 \| (I+S)^{-1}\|_{Y}\les \eps_{0}^{-3}\big( \eps_{1}^{-3}+ L_{0}^{3}M_{1}^{3}\|S\|_{Y}^{3}\big) M_{1}
 }
\end{cor}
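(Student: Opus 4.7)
The proof is a quantitative reprise of Proposition~\ref{prop:YWiener}. I would assemble the inverse $(I+S)^{-1}=I+\calL$ as a sum of a far-field and a near-field contribution using a partition of unity in the dual variable~$\eta$. For the far-field piece, set $\mu_{\eps_0} := -\delta_0+\eps_0^{-3}\chi(\cdot/\eps_0)$ and form $\calL_\infty$ exactly as in~\eqref{eq:Laussen}; hypothesis~\eqref{eq:Bed1} guarantees $\|\mu_{\eps_0}^N\oast S^N\|_Y<2^{-N}$, so the Neumann expansion converges in~$Y$ at a geometric rate and $\|\calL_\infty\|_Y$ is bounded by a constant depending only on~$N$ and polynomially on~$\|S\|_Y$. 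This piece inverts $I+S$ on $|\eta|\ge 2\eps_0^{-1}$, and contributes $O(1)$ to the final bound.

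For the near-field piece $|\eta|\le 3\eps_0^{-1}$, I would build a local inverse $\calL_{\eta_0}$ near each point $\eta_0$ exactly as in~\eqref{eq:cLeta0}. To make the Neumann series defining $H_{\eta_0}$ in~\eqref{eq:Heta0} converge in~$Y$, I need $\|\calD_{\eta_0}+U(\eta_0)\circ\calD_{\eta_0}\|_Y<\tfrac12$, and by~\eqref{eq:Ubd*} this reduces to $\|\calD_{\eta_0}\|_Y\les M_1^{-1}$, where $\calD_{\eta_0}$ is as in~\eqref{eq:calDdef} with a patch radius $\eps_*$ to be chosen. The $Z$-component of the norm is controlled by $\sup_{|\eta-\eta_0|\le\eps_*}\|\what S(\eta)-\what S(\eta_0)\|_{\B(L^\infty)}$ as in~\eqref{eq:DZ}, which is $\le cM_1^{-1}$ whenever $\eps_*\le\eps_1$ by~\eqref{eq:Bed3}. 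For the $\B(V^{-1}B,X_{x_1,y})$-component I would first replace $S$ by its $y$-truncation $\hat{\chi}(y/L_0)S(y)$; by~\eqref{eq:Bed2} the truncation error has $Y$-norm $\le cM_1^{-1}$ and is absorbed into the normalizing factor $I+U(\eta_0)$ in~\eqref{eq:Heta0}. The estimate~\eqref{eq:832} then yields a bound $\les \eps_* L_0\|S\|_Y$, which is $\le cM_1^{-1}$ provided $\eps_*\les M_1^{-1}(L_0\|S\|_Y)^{-1}$. Both constraints are satisfied by the effective patch radius
\[
\eps_* := \min\bigl(\eps_1,\;cM_1^{-1}(L_0\|S\|_Y)^{-1}\bigr).
\]
With this choice $\|H_{\eta_0}\|_Y\le 1$, and Lemma~\ref{lem:6prop}(iv)--(vi) together with $\|\omega_{\eps_*,\eta_0}\|_{L^1}\les 1$ give $\|\calL_{\eta_0}\|_Y\les M_1$ uniformly in $\eta_0$.

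Finally, cover $\{|\eta|\le 3\eps_0^{-1}\}$ by a collection of balls $B(\eta_j,\eps_*/2)$, whose cardinality is
\[
N_*\les (\eps_0\eps_*)^{-3}\les \eps_0^{-3}\bigl(\eps_1^{-3}+L_0^3 M_1^3\|S\|_Y^3\bigr).
\]
Choose a subordinate partition of unity $\{\what{\phi_j}\}$ in~$\eta$ and a far-field cutoff $\what{\phi_\infty}$ supported in $|\eta|\ge\eps_0^{-1}$, all with $\|\phi_j\|_{L^1}\les 1$, and set
\[
\calL := \phi_\infty\ast\calL_\infty+\sum_{j=1}^{N_*}\phi_j\ast\calL_{\eta_j}.
\]
The identity $(I+\calL)\oast(I+S)=I$ follows from the Fourier-side telescoping already used at the end of Proposition~\ref{prop:YWiener}. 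Lemma~\ref{lem:6prop}(ii) and (v) give
\[
\|\calL\|_Y \le \|\phi_\infty\|_{L^1}\|\calL_\infty\|_Y+\sum_{j=1}^{N_*}\|\phi_j\|_{L^1}\|\calL_{\eta_j}\|_Y\les N_* M_1,
\]
which is exactly~\eqref{eq:Sinvnorm}.

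\emph{Main obstacle.} The delicate point is that the two components of the $Y$-norm of~$\calD_{\eta_0}$ impose independent smallness constraints on~$\eps_*$: the $Z$-part only sees $\eps_1$, while the $\B(V^{-1}B,X)$-part demands a scale $\les M_1^{-1}(L_0\|S\|_Y)^{-1}$. Taking the minimum is precisely what yields the additive $\eps_1^{-3}+L_0^3 M_1^3\|S\|_Y^3$ inside the parentheses of~\eqref{eq:Sinvnorm}; a careful bookkeeping of the $y$-truncation of $S$ is needed to ensure that the smallness of~\eqref{eq:Bed2} is not spoiled when one passes to $\calD_{\eta_0}$, and that the resulting errors can be absorbed uniformly in $\eta_0$ into the normalizing factor $I+U(\eta_0)$.
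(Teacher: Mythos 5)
Your proposal reproduces the paper's own proof essentially line by line: split the inverse into a far-field piece (exactly as in~\eqref{eq:Laussen}, using~\eqref{eq:Bed1}), then build local inverses $\calL_{\eta_0}$ with patch radius $\min(\eps_1,\,cM_1^{-1}(L_0\|S\|_Y)^{-1})$ dictated by the two components of the $Y$-norm of~$\calD$ (via~\eqref{eq:DZ} plus~\eqref{eq:Bed3}, and~\eqref{eq:832} after $y$-truncation by~\eqref{eq:Bed2}), and count $\les\eps_0^{-3}\eps_*^{-3}$ patches to assemble the global inverse. The argument and the resulting bound are the same as in the paper.
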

\begin{proof}
Condition \eqref{eq:Rgross} is precisely \eqref{eq:Bed1} with $\eps=\eps_{0}$. Then on $|\eta|\ge R=\eps_{0}^{-1}$ we obtain a solution $\calL$ of norm $\|\calL\|_{Y}\les 1$, see~\eqref{eq:Laussen}.  In view of~\eqref{eq:DZ} we have $\| \calD\|_{Z}\ll M_{1}^{-1}$ by~\eqref{eq:Bed3}. Setting $\eps=\eps_{2}:= c(M_{1}L_{0}\|S\|_{Y})^{-1}$ 
in~\eqref{eq:832} with $L=L_{0}$ we obtain 
\[
\|\calD\|_{\B(V^{-1}B_{x_{0}}, X_{x_{1},y} )} \les \eps_{2}L_{0}\|S\|_{Y}\ll M_{1}^{-1}
\]
if we choose $c$ small.   Hence, $\eps=\min(\eps_{1},\eps_{2})=:\eps_{3}$ guarantees that $\|\calD\|_{Y}\ll M_{1}$, uniformly in $\eta_{0}\in\R^{3}$. 
This further insures that \eqref{eq:U0calD} holds, which defines $H_{\eta_{0}}$ via \eqref{eq:Heta0} with $\|H_{\eta_{0}}\|_{Y}\le 1$.  
The local inverse $\calL_{\eta_{0}}$ given by~\eqref{eq:cLeta0} satisfies $\| \calL_{\eta_{0}}\|_{Y}\les M_{1}$. 

Patching together these local solutions requires $\les R^{3}\eps_{3}^{-3}$ choices of $\eta_{0}$ over which we sum up this $M_{1}$ bound on $\calL$. 
Together with the solution exterior to the $R$-ball, 
the
cumulative  bound on the global inverse amounts to
\EQ{
\|\calL\| &\les 1 + R^{3}\eps_{3}^{-3} M_{1} \les \eps_{0}^{-3} (\eps_{1}^{-3}+\eps_{2}^{-3})M_{1} \\
&\les \eps_{0}^{-3}\big( \eps_{1}^{-3}+ L_{0}^{3}M_{1}^{3}\|S\|_{Y}^{3}\big) M_{1}, 
} 
as claimed. 
\end{proof}


\section{The proof of Theorem~\ref{thm:struct} and~\ref{thm:structQ}}
\label{sec:proof1}

We first verify the conditions \eqref{eq:Sass1}, \eqref{eq:Sass2} for $T_{1+}^\eps$. It is precisely at this point that we need to define the algebra $Y$ using $V^{-1}B$
with $\sigma>\frac12$. 

\begin{lemma}
\lb{lem:T1cond}
Let $V\in B^{2\sigma}$ where $\frac12<\sigma<1$ is arbitrary but fixed. Then $S=T_{1+}^{\eps} \in Y$ satisfies, for sufficiently large $N\ge1$
\begin{align}
\label{eq:Sass1T}
\lim_{\delta\to0} \|\delta^{-3}{\chi}(\cdot/\delta)\ast S^{N} - S^{N}\|_{Y} &= 0  \\
\lim_{L\to\infty} \|(1-\hat{\chi}(y/L))S(y)\|_{Y} &=0\label{eq:Sass2T}
\end{align}
uniformly in $\eps\ge0$. 
\end{lemma}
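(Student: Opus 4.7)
The plan is to verify the two conditions separately, splitting the $Y$-norm into its $Z$- and $\B(V^{-1}B,X)$-components. The hypothesis $V\in B^{2\sigma}$ with $\sigma>\tfrac12$ plays a double role: it activates the weighted kernel estimate Lemma~\ref{lem:K1weighted} with excess parameter $\gamma_1=\sigma-\tfrac12>0$, and it secures the uniform H\"older continuity \eqref{eq:Hoelder} of $\eta\mapsto \widehat{T_{1+}^\eps}(\eta)$. For the second condition the decisive extra ingredient is Lemma~\ref{lem:Asq}, which gives $\|(R_0(|\eta|^2-i\eps)V)^2\|_{\infty\to\infty}\to 0$ as $|\eta|\to\infty$.

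For \eqref{eq:Sass2T} I would argue as follows. Since $1-\hat\chi(y/L)$ is supported in $|y|\gtrsim L$, it is majorized by $CL^{-\gamma_1}\la y\ra^{\gamma_1}$; applying \eqref{eq:Kest2weighted} via \eqref{eq:fV1} (with $fV$ in place of $V$) then gives
$$\|(1-\hat\chi(y/L))T_{1+}^\eps\|_{\B(V^{-1}B,X)}\lesssim L^{-\gamma_1}\|V\|_{B^{2\sigma}}$$
uniformly in $\eps\ge 0$. For the $Z$-component, multiplication by $1-\hat\chi(y/L)$ in $y$ corresponds on the Fourier side to subtracting from $\widehat{T_{1+}^\eps}(\eta)$ its convolution with a bump of width $1/L$; the H\"older bound with exponent $\rho=\sigma-\tfrac12$ then converts this into an $O(L^{-\rho})$ error in $\B(L^\infty)$.

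For \eqref{eq:Sass1T} I would exploit that scalar convolution in $y$ commutes with $\oast$, so by the Banach-algebra property of $Y$ and the uniform bound $\sup_\eps\|S^{N-2}\|_Y\lesssim\|V\|^{N-2}_{B^{1/2+\sigma}}$ from Proposition~\ref{prop:Tngn},
$$\|\chi_\delta\ast S^N-S^N\|_Y\lesssim\|\chi_\delta\ast S^2-S^2\|_Y\,\|V\|^{N-2}_{B^{1/2+\sigma}}$$
for $N\ge 2$, so it suffices to treat $N=2$. The $Z$-part is then routine: Lemmas~\ref{lem:FTe} and~\ref{lem:Asq} give $\|\widehat{S^2}(\eta)\|_{\B(L^\infty)}\to 0$ as $|\eta|\to\infty$, and splitting into $|\eta|\le M$ (where $\hat\chi(\delta\eta)-1\to 0$ as $\delta\to 0$) and $|\eta|>M$ (where $\widehat{S^2}(\eta)$ is small for $M$ large) delivers the required convergence uniformly in $\eps$.

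The hard part will be the $\B(V^{-1}B,X)$-component for $S^2$, which demands uniformity over the unit ball of $V^{-1}B$. My plan there is first to extend \eqref{eq:Sass2T} to $S^2$ by iterating the weighted bound, and then to truncate $S^2=\hat\chi(y/L)S^2+(1-\hat\chi(y/L))S^2$: the tail contributes $O(L^{-\gamma_1})$ by the truncation estimate, while on the compactly supported piece the explicit representation of the $T_{1+}^\eps$-kernel from Lemma~\ref{lem:Kexp} should produce a $y$-modulus of continuity in the contracted $X$-norm depending only on $\|V\|_{B^{2\sigma}}$ and $\|f\|_{V^{-1}B}$, yielding the required uniform mollifier convergence $\chi_\delta\ast(fS^2)-fS^2\to 0$ in $X$.
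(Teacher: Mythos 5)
Your treatment of \eqref{eq:Sass2T} matches the paper: the tail bound via Lemma~\ref{lem:K1weighted} together with \eqref{eq:fV1} gives the $\B(V^{-1}B,X)$-part, and the H\"older/uniform-continuity of $\eta \mapsto \what{T_{1+}^\eps}(\eta)$ from Lemma~\ref{lem:FTinvert} handles the $Z$-part. That half is fine.

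For \eqref{eq:Sass1T} your plan diverges from the paper and contains a genuine gap. Your algebraic reduction $\|\chi_\delta\ast S^N - S^N\|_Y \lesssim \|\chi_\delta\ast S^2 - S^2\|_Y \, \|S\|_Y^{N-2}$ is valid, but it shifts all the difficulty onto showing $\|\chi_\delta\ast S^2 - S^2\|_Y\to 0$, and the $\B(V^{-1}B,X_{x_1,y})$-component of this is exactly what you leave unproved. Mollifier convergence in $L^1_y V^{-1}B_x$ is standard for a single fixed $f$, but here you need it \emph{uniformly} over the $V^{-1}B$-unit ball, i.e.\ equicontinuity in $y$ of the family $\{fS^2 : \|f\|_{V^{-1}B}\le 1\}$. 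Your plan to extract a modulus of continuity from the $T_{1+}^\eps$-kernel of Lemma~\ref{lem:Kexp} does not transfer cleanly to $T_{2+}^\eps$: the two resolvent factors in \eqref{2.17} destroy the distributional-surface-measure structure in $y$ that makes the $n=1$ kernel tractable, and $L(r,\omega)$ itself carries no pointwise regularity in $r$ beyond the weighted $L^2$ bounds of Proposition~\ref{lemma2.9}.

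The paper circumvents this entirely. Having truncated to $|y|\le L$ via \eqref{eq:Sass2T}, it uses Lemma~\ref{lem:Ynorms} to pass to $L^1_y\B(V^{-1}B,L^\infty)$, then pays an $L^3$ factor to reach $L^\infty_y$, and finally controls that by $\|(1-\hat\chi(\delta\eta))\mc F_y S^N(\eta)\|_{L^1_\eta\B(V^{-1}B,L^\infty)}$. This quantity tends to $0$ precisely because the Stein--Tomas resolvent bound \eqref{eq:krs}--\eqref{eq:krs7} furnishes the $L^1_\eta$-integrable decay $\|\mc F_y S^N(\eta)\|_{\B(V^{-1}B,L^\infty)}\le C|\eta|^{-4}$, which forces $N=11$. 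For $N=2$ the decay from Lemma~\ref{lem:Asq} is only $o(1)$, not $L^1_\eta$, so the $L^1_\eta$-to-$L^\infty_y$ transfer does not apply, and that is why the reduction to $N=2$ is a dead end with the tools at hand. The key quantitative ingredient you are missing is the Stein--Tomas iteration and the resulting integrable $\eta$-decay.
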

\begin{proof}
We begin with \eqref{eq:Sass2T}. By definition of the $Z$ space
\EQ{\lb{eq:LSZ}
\|(1-\hat{\chi}(y/L))S(y)\|_{Z} &=  \sup_{\eta\in\R^3} \big\| [(\delta_0-L^3\chi(\cdot L))\ast \what{T_{1+}^{\eps}}](\eta) \big\|_{\B(L^\infty)} 
}
By Lemma~\ref{lem:FTinvert}, $\what{T_{1+}^{\eps}}(\cdot)$  is uniformly continuous as a map $\R\to \B(L^\infty)$, which precisely 
means that~\eqref{eq:LSZ} vanishes in the limit $L\to\infty$. 

\noindent Next, \eqref{eq:Kest2weighted} and with $\sigma=\frac12+\gamma>\frac12$, 
\begin{align}\label{eq:gamma schranke}
\int_{\R^{3}} \la y\ra^{\gamma}\| V(x) K_{1+}^\eps (x,y)\|_{B^{\sigma}_x} \, dy & \les \| V\|_{B^{1+2\gamma}} \| V\|_{B^{\sigma}}     
\end{align}
Consequently,  since $1+2\gamma = 2\sigma$, 
\EQ{\nn 
\sup_{\eps>0} \int_{\R^{3 }} \one_{[|y|\ge L]} \big\|  V(x) (f T_{1+}^{\eps})(x,y)\, \big\|_{B^{\sigma}} \, dy\les  L^{-\gamma} \|fV\|_{B^{\sigma}} \|V\|_{B^{2\sigma}}
}
which is equivalent with 
\EQ{\nn 
\sup_{\eps>0}\Big\|  \one_{[|y|\ge L]}(f T_{1+}^{\eps})(x,y)\Big\|_{X_{x,y}} \les  L^{-\gamma} \|f\|_{V^{-1}B} \|V\|_{B^{2\sigma}}
}
and thus also with, for all $L\ge1$,  
\EQ{\label{eq:LfT}
\sup_{\eps>0}\Big\|  \one_{[|y|\ge L]} T_{1+}^{\eps}(x,y)\Big\|_{\B(V^{-1}B,X_{x,y})} \les  L^{-\gamma}  \|V\|_{B^{2\sigma}}
}
with an absolute implicit constant.  In conjunction with \eqref{eq:LSZ} this proves~\eqref{eq:Sass2T}. 

To prove \eqref{eq:Sass1T} we may therefore assume that $T(y)=0$ for $|y|\ge L$, with some large~$L$.  Using Lemma~\ref{lem:Ynorms} we estimate
\begin{align}
& \big\| \one_{[|y|\le L]} (\delta^{-3}{\chi}(\cdot/\delta)\ast S^{N} - S^{N} ) \big \|_Y \nn   \\
&\les \big\| \one_{[|y|\le L]} (\delta^{-3}{\chi}(\cdot/\delta)\ast S^{N} - S^{N}) \big \|_{L^{1}_{y}\B(V^{-1}B_{x_{0}}, L^{\infty}_{x_{1}})}  \nn \\
&\les L^{3} \big \|  \delta^{-3}{\chi}(\cdot/\delta)\ast S^{N} - S^{N} \big \|_{L^{\infty}_{y}\B(V^{-1}B_{x_{0}}, L^{\infty}_{x_{1}})}  \nn  \\
&\les L^{3}  \| (1-\hat{\chi}(\delta \eta)) \mc F_{y}{S}^{N}(\eta)\|_{L^{1}_{\eta}\B(V^{-1}B_{x_{0}}, L^{\infty}_{x_{1}})}  \label{eq:L3}
\end{align}
By \eqref{eq:t1'}, 
\EQ{\nn 
\mc F_y (T_{1+}^\eps)^{N} (x_0, x_1, \eta) = e^{ix_0\eta} \big(R_0(|\eta|^2 - i\eps ) V\big)^{N}(x_0, x_1) e^{-ix_1\eta}.
}
We claim that if $N$ is large enough, then uniformly in $\eps>0$, 
\EQ{\label{eq:dec claim}
\|  \mc F_{y}{S}^{N}(\eta)\|_{\B(V^{-1}B_{x_{0}}, L^{\infty}_{x_{1}})} \le C |\eta|^{-4}
}
If so, then \eqref{eq:L3} vanishes in the limit $\delta\to0$, and \eqref{eq:Sass1T} holds. 
Recall the Stein-Tomas type bound for the free resolvent, see~\cite{krs}, 
\[
\|R_{0}(1+i\eps)f\|_{L^{4}(\R^{3})}\le C\|f\|_{L^{\frac43}(\R^{3})}
\]
uniformly in $\eps>0$. By scaling, this implies 
\EQ{\label{eq:krs}
\sup_{\eps>0}\|R_{0}(\lambda^{2}+i\eps)\|_{L^{\frac43}(\R^{3})\to L^{4}(\R^{3})}\le C\lambda^{-\frac12}
}
for all $\lambda>0$, and therefore 
\EQ{\label{eq:krs4}
\sup_{\eps>0}\|(R_{0}(\lambda^{2}+i\eps)V)^{8}\|_{L^{4}(\R^{3})\to L^{4}(\R^{3})}\le C\lambda^{-4}
}
where we used that $V\in L^{2}(\R^{3})$.  Recall $R_{0}(\lambda^{2}+i\eps)V: V^{-1}B\to L^{\infty}$ uniformly in $\lambda, \eps$, see Lemma~\ref{lm2.4}. Next,  $V\in L^{\frac65}\cap L^{2}\embed L^{\frac43}$, see Lemma~\ref{lem:Balpha interpol}.  Therefore, 
\EQ{\label{eq:krs5}
\sup_{\eps>0}\|(R_{0}(\lambda^{2}+i\eps)V)^{9}\|_{V^{-1}B\to L^{4}(\R^{3})}\le C\lambda^{-4}
}
for all $\lambda\ge1$.  Since $V\in L^{\frac{12}{7}, 1}(\R^{3})$, see \eqref{eq:BdotL}, we have $M_{V}:L^{4}\to L^{\frac65,1}$ by~\eqref{eq:Ho1} where $M_{V}$ is the multiplication operator by~$V$. Furthermore, by \eqref{eq:Y2}, one has $R_{0}: L^{\frac65,1}\to L^{6,1}$ and $M_{V}: L^{6,1}\to L^{\frac32,1}$ since $V\in L^{2}$. Finally, $R_{0}:L^{\frac32,1}\to L^{\infty}$. To summarize, 
\EQ{\label{eq:krs7}
\sup_{\eps>0}\|(R_{0}(\lambda^{2}+i\eps)V)^{11}\|_{V^{-1}B\to L^{\infty}(\R^{3})}\le C\lambda^{-4}
}
and \eqref{eq:dec claim} holds with $N=11$. 
\end{proof}

We are ready to prove the  structure theorem on the wave operators for potentials $V\in B^{2\sigma}$, $\sigma>\frac12$. 

\begin{proof}[Proof of Theorem~\ref{thm:struct}]
All assumptions of our abstract Wiener theorem, Proposition~\ref{prop:YWiener}, hold for $T_{1+}^{\eps}$. And they do so uniformly in $\eps>0$. 
Thus we can invert $I+T_{1+}^{\eps}$ in $Y$. By uniqueness of the inverse in the ambient algebra~$Z$, see~\eqref{2.37},  this inverse is given by 
\EQ{\label{eq:waschno}
(I+T_{1+}^{\eps})^{-1}=I-T_{+}^{\eps}, \qquad M_{2}:=\sup_{\eps>0} \|T_{+}^{\eps}\|_{Y} <\infty
}
Hence
\EQ{\label{eq:T+eps}
T_{+}^{\eps} = I - (I+T_{1+}^{\eps})^{-1} & = (I+T_{1+}^{\eps})^{-1} \oast T_{1+}^{\eps} = (I-T_{+}^{\eps})\oast T_{1+}^{\eps} \\
&=  T_{1+}^{\eps} \oast  (I-T_{+}^{\eps})
}
By Lemma~\ref{lemma2.1}, eq.~\eqref{2.20},  for any Schwartz function $f$, 
\[
(W_{+}^{\eps}f)(x) = f(x)- \int_{\R^{3}} (\one_{\R^{3}}T_{+}^{\eps})(x,y)f(x-y)\, dy,
\]
cf.~\eqref{eq:1Tn}.  Set $\frak X_{+}^{\eps}(x,y):= (\one_{\R^{3}}T_{+}^{\eps})(x,y)$ which satisfies, by~\eqref{eq:waschno} and with $X=L^{1}_{y}V^{-1}B_{x}$, 
\EQ{\label{eq:waschno*}
\sup_{\eps>0}\| \frak X_{+}^{\eps}\|_{X}\le \sup_{\eps>0}\| T_{+}^{\eps}\|_{Y}\| \|\one_{\R^{3}}\|_{V^{-1}B} \les M_{2}\|V\|_{B^{2\sigma}} <\infty
}
By \eqref{eq:frakXV}  we have 
\EQ{\nn
\frak X_{V}^{\eps} (x,y) &= -(\one_{\R^{3}} T_{1+}^{\eps})(x,y) \in X \\
\| \frak X_{V}^{\eps} \|_{L^{1}_{y}V^{-1}B} &=  \| \frak X_{V}^{\eps} \|_X \les \|T_{1+}^{\eps}\|_{Y}\|\one_{\R^{3}}\|_{V^{-1}B} \les \|V\|^{2}_{B^{2\sigma}}
}
 and \eqref{eq:T+eps} implies that 
\EQ{\lb{eq:XVT}
\frak X_{+}^{\eps} &=  - \frak X_{V}^{\eps} - \frak X_{+}^{\eps}T_{1+}^{\eps}  
}
where we used the contraction notation~\eqref{contractie} in the final term.  The first term in~\eqref{eq:XVT} has the desired form  by Corollary~\ref{cor:K1+}. For the second term, in analogy with \eqref{eq:Wneps}, we have 
\EQ{
(\frak X_{+}^{\eps}T_{1+}^{\eps})(x,y)  =  \int_{\R^{3}} \frak X^{\eps}_{f^{\eps}_{y'}V}(x,y-y')\, dy', \qquad f^{\eps}_{y'}(x'):= \frak X_{+}^{\eps}(x',y')
}
This kernel operates on Schwartz  functions by contraction, i.e., 
\[
(\frak X^{\eps}_{f^{\eps}_{y'}} \, \phi)(x') = \int_{\R^{3}}  \frak X_{+}^{\eps}(x',y') \phi(x'-y')\, dy'
\]
Formulas \eqref{eq:g1*}, \eqref{eq:gnmeas'} apply here. Viz., from  Corollary~\ref{cor:K1+},  eq.~\eqref{eq:g1},  there exists $g_{1,f^{\eps}_{y'}}^{\eps}(x,dy,\omega)$  so that for every 
$\phi\in \mc S$ one has
\EQ{\label{eq:g1W}
(\frak X^{\eps}_{f^{\eps}_{y'}V}\; \phi)(x) = \int_{\Sph^2 }\int_{\R^3} g_{1,f^{\eps}_{y'}}^{\eps} (x,dy,\omega) \phi(S_\omega x-y)\,  \sigma(d\omega)
}
where  for fixed $x\in\R^3$, $\omega\in \Sph^2 $ the expression $g_{1,f^{\eps}_{y'}}^{\eps}(x,\cdot,\omega)$ is a measure satisfying 
\EQ{\lb{eq:gnmeasW}
\sup_{\eps>0} \int_{\Sph^2 }  \| g_{1,f^{\eps}_{y'}}^{\eps}(x,dy,\omega)\|_{\mes_{y} L^\infty_x}  \, d\omega &\le C\|f^{\eps}_{y'}V\|_{B^{\frac12}} \le C\|f^{\eps}_{y'}V\|_{B^{\sigma}}\\
&= C  \| \frak X_{+}^{\eps}(x',y') \|_{V^{-1}B_{x'}}    
}
The exact same calculations that we carried out in eq.~\eqref{eq:I}--\eqref{eq:IV} now yield 
\EQ{\lb{eq:II*}
h^\eps(x,dy, \omega) := \int_{\R^{3}} g_{1,f^{\eps}_{y'}}^{\eps} (x-y',d(y-S_{\omega}y'),\omega) \, dy'
}
and 
\EQ{\nn
(\frak X_{+}^{\eps}T_{1+}^{\eps} \phi)(x) &=  \int_{\Sph^{2}}\int_{\R^{3}} h^{\eps} (x,dy,\omega) \phi(S_{\omega}x-y)\, d\omega \\
}
where $h^{\eps}$ satisfies
\EQ{\nn 
  \int_{\Sph^2 }  \| h^{\eps}(x,dy,\omega)\|_{\mes_{y} L^\infty_x}  \, d\omega  & \le    C  \int_{\R^3}  \|  \frak X_{+}^{\eps}(x',y') \|_{V^{-1}B_{x'}} \,  dy' \\
& =  C   \|\frak X_{+}^{\eps} \|_{X} \le C\, M_{2}\|V\|_{B^{2\sigma}}
}
uniformly in $\eps>0$. Setting $g^{\eps}:=g_{1}^{\eps} + h^{\eps}$, where $g_{1}^{\eps} $ is from~\eqref{eq:g1}, we arrive at the 
structure formula
 \begin{align}
(W_{+}^{\eps}f)(x) &= f(x)+ \int_{\R^{3}} g^{\eps}(x,dy,\omega)f(S_{\omega}x-y) \nn  \\
\sup_{\eps>0}\int_{\Sph^2 }  \|  g^{\eps}(x,dy,\omega)\|_{\mes_{y} L^\infty_x}  \, d\omega & \les (1+M_{2})\|V\|_{B^{2\sigma}} \label{eq:grand}
\end{align}
Finally, we pass to the limit $\eps\to0$ by Lemma~\ref{lem:Weps}. 

The claim concerning the general Banach space~$X$ (with a different meaning than then one appearing in $Y$) and~\eqref{eq:HW}  in Theorem~\ref{thm:struct} follows from Corollary~\ref{cor:K1+}, eq.~\eqref{eq:g1def}, and~\eqref{eq:II*}. Indeed, \eqref{eq:g1def} shows that $g_1$ has this half-space structure, and the global structure functions is an average of such operators.  To be specific, let $H(\omega,y):= \{ x\in\R^3\::\: (y+2x)\cdot\omega>0\}$. Then by~\eqref{eq:g1def}
\EQ{\lb{eq:1234}
&\Big\| \int_{\Sph^2 } \int_{\R^{3}} g_1(x,dy,\omega)f(S_{\omega}x-y)\, d\omega \Big\|_X \\ &\le  \int_{\Sph^2 } \int_{\R^{3}} \big\| \one_{H(\omega,y)} f(S_\omega x-y)\big\|_X |L_V(y\cdot\omega,\omega)|\, \calH^1_{\ell_\omega}(dy) \\
&\le A \|f\|_X \int_{\Sph^2 } \int_{\R}   |L_V(r,\omega)|\, drd\omega \les A \|V\|_{\dot B^\frac12} \|f\|_X \le AC(V) \|f\|_X.
}
Passing to the limit $\eps\to0$ in $h^\eps$  we have
\[
h(x,dy,\omega) = \int_{\R^3} \one_{H(\omega,y-S_\omega y')}(x-y') L_{f_{y'}V} ((y-S_\omega y')\cdot\omega,\omega) \calH^1_{\ell_\omega}( d (y-S_\omega y') ) \, dy' 
\]
whence in analogy with~\eqref{eq:1234}
\EQ{\nn 
&\Big\| \int_{\Sph^2 } \int_{\R^{3}} h(x,dy,\omega)f(S_{\omega}x-y)\, d\omega \Big\|_X \\ 
&\le A \|f\|_X   \int_{\R^3} \int_{\Sph^2 } \int_{\R}   |L_{f_{y'}V}(r,\omega)|\, drd\omega \, dy' \\
&\les A\|f\|_X  \int_{\R^{3}}\| \frak X_{+}^{\eps}(x',y') \|_{V^{-1}B_{x'}} \, dy'  \les M_{2}\|V\|_{B^{2\sigma}} A\|f\|_X
}
   The claim concerning the variable $x_\omega$, and the associated bound~\eqref{reg}, follow in the same way. 
\end{proof}

In order to obtain quantitative estimates on the structure function $g(x,y,\omega)$, we verify the conditions of Corollary~\ref{cor:YWiener}. 

\begin{lemma}
\lb{lem:T1condQ}
Let $V\in B^{2\sigma}$ where $\frac12+\gamma=\sigma$, with $0<\gamma< \frac{1}{2}$.  Then the conditions in Corollary~\ref{cor:YWiener} hold with
\EQ{\label{eq:Qcond}
 K &:= 1 + \|V\|_{B^{2\sigma}} \\
  L_{0} &=  c^{-1} (KM_{1})^{\frac{1}{\gamma}} \\
 M_{1} &= 1 + K  M_{0} \\
 \eps_{0} &=  cK^{-10-\frac{33}{\gamma}} \\
 \eps_{1} &=  c K^{-2}M_{1}^{-2} 
}
where $M_{0}$ is as in~\eqref{eq:Res Bd}, and $c$ is a small absolute constant. 
Thus, there is a bound
\EQ{\label{eq:M37}
\sup_{\eps>0}\| T_{+}^{\eps}\|_{Y} \les  M_{2}:=K^{37+\frac{105}{\gamma}} (1+M_{0})^{4+\frac{3}{\gamma}} 
}
Therefore, in combination with~\eqref{eq:grand} we obtain 
 \begin{align}
\sup_{\eps>0}\int_{\Sph^2 }  \|   g^{\eps}(x,dy,\omega)\|_{\mes_{y} L^\infty_x}  \, d\omega & \les K^{38+\frac{105}{\gamma}} (1+M_{0})^{4+\frac{3}{\gamma}}  \label{eq:grand*}
\end{align}
\end{lemma}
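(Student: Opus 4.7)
The strategy is to apply the quantitative Wiener theorem, Corollary~\ref{cor:YWiener}, to $S=T_{1+}^{\eps}\in Y$ with quantitative choices of $M_{1}, L_{0}, \eps_{1}, \eps_{0}$ depending polynomially on $K:=1+\|V\|_{B^{2\sigma}}$ and $M_{0}$. Once the four hypotheses \eqref{eq:Ubd*}--\eqref{eq:Bed3} are verified, \eqref{eq:Sinvnorm} gives a quantitative bound on $\|(I+T_{1+}^{\eps})^{-1}\|_{Y}$, and \eqref{2.37} combined with uniqueness of inverses in the ambient algebra $Z$ transfers it to $\|T_{+}^{\eps}\|_{Y}$.

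First I would fix $M_{1}$ via~\eqref{eq:Inv2*}, which gives $\sup_{\eta}\|\what{T_{+}^{\eps}}(\eta)\|_{\mc FY}\les KM_{0}$, hence $M_{1}\les 1+KM_{0}$ as stated. Condition~\eqref{eq:Bed2} reduces, as in the proof of Lemma~\ref{lem:T1cond}, to the tail estimate~\eqref{eq:LfT}: $\|(1-\hat{\chi}(y/L))T_{1+}^{\eps}\|_{Y}\les L^{-\gamma}K$ (the $Z$-component of the $Y$-norm being handled by the uniform continuity of $\what{T_{1+}^{\eps}}$). Solving $L^{-\gamma}K\le cM_{1}^{-1}$ yields $L_{0}\sim (KM_{1})^{1/\gamma}$, matching~\eqref{eq:Qcond}. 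Condition~\eqref{eq:Bed3} follows from the H\"{o}lder continuity~\eqref{eq:Hoelder} in $\mc FY\supset\B(L^{\infty})$, namely $\|\what{T_{1+}^{\eps}}(\eta)-\what{T_{1+}^{\eps}}(\eta_{0})\|_{\B(L^{\infty})}\les|\eta-\eta_{0}|^{\gamma}K$; imposing this to be $\le cM_{1}^{-1}$ yields $\eps_{1}\sim (KM_{1})^{-1/\gamma}$, and any upper bound of this form (in particular the cruder $\eps_{1}\sim K^{-2}M_{1}^{-2}$ stated in~\eqref{eq:Qcond}) suffices.

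The main obstacle is condition~\eqref{eq:Bed1}. Fix $N=11$, motivated by~\eqref{eq:krs7}. Following~\eqref{eq:L3} in the proof of Lemma~\ref{lem:T1cond}, split
\[
\delta^{-3}\chi(\cdot/\delta)\ast S^{N}-S^{N}=\one_{[|y|\le L]}\big(\delta^{-3}\chi(\cdot/\delta)\ast S^{N}-S^{N}\big)+\one_{[|y|>L]}\big(\delta^{-3}\chi(\cdot/\delta)\ast S^{N}-S^{N}\big).
\]
By the algebra property of $Y$ and the tail estimate~\eqref{eq:LfT} applied to the last factor in $S^{N}=S^{N-1}\oast S$, the far piece contributes $\les L^{-\gamma}K^{N}$. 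The near piece is bounded, exactly as in~\eqref{eq:L3}, by $L^{3}\|(1-\hat{\chi}(\delta\eta))\mc F_{y}S^{N}(\eta)\|_{L^{1}_{\eta}\B(V^{-1}B,L^{\infty})}$, and the Stein--Tomas type decay $\|\mc F_{y}S^{N}(\eta)\|_{\B(V^{-1}B,L^{\infty})}\les K^{O(1)}|\eta|^{-4}$ from~\eqref{eq:krs7} makes this $\les L^{3}\delta K^{O(1)}$. Choosing $L\sim K^{N/\gamma}$ so that the far part is $\le 2^{-N-1}$, and then $\delta\sim L^{-3}K^{-O(1)}2^{-N-1}\sim K^{-10-33/\gamma}$ yields~\eqref{eq:Bed1}, matching the stated $\eps_{0}$; the exponent $33/\gamma$ reflects the $3/\gamma$ coming from $L^{3}$ with $N=11$.

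Finally, substituting the values of $M_{1}, L_{0}, \eps_{1}, \eps_{0}$ and $\|S\|_{Y}\les K$ (from Corollary~\ref{cor:T1Y}) into~\eqref{eq:Sinvnorm} and collecting powers of $K$ and $M_{1}\les K(1+M_{0})$ gives the bound $\|T_{+}^{\eps}\|_{Y}\les K^{37+105/\gamma}(1+M_{0})^{4+3/\gamma}$, which is~\eqref{eq:M37}. The structure function bound~\eqref{eq:grand*} then follows from~\eqref{eq:grand}, since $(1+M_{2})K\les K^{38+105/\gamma}(1+M_{0})^{4+3/\gamma}$.
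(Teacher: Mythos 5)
Your overall strategy, and most of the computation, matches the paper. The values of $M_{1}$, $L_{0}$, $\eps_{0}$ and the final collection of exponents into~\eqref{eq:M37} are the same, and your decomposition of $\eps_{0}^{-3}\chi(\cdot/\eps_{0})\ast S^{N}-S^{N}$ by a $y$-cutoff is an equivalent (arguably cleaner) rephrasing of what the paper does with $\tilde S=\one_{[|y|\le L_1]}S$ in its display~\eqref{eq:StildeS}.

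There is, however, a genuine gap in your treatment of the modulus-of-continuity condition~\eqref{eq:Bed3}. You invoke the H\"older bound~\eqref{eq:Hoelder} with exponent $\rho=\gamma$, solve $\eps_{1}^{\gamma}K\le cM_{1}^{-1}$ to get $\eps_{1}\sim(KM_{1})^{-1/\gamma}$, and then assert that ``any upper bound of this form, in particular the cruder $\eps_{1}\sim K^{-2}M_{1}^{-2}$, suffices.'' This is backwards: condition~\eqref{eq:Bed3} must hold on the whole ball $|\eta-\eta_{0}|\le\eps_{1}$, so you may only make $\eps_{1}$ \emph{smaller} than the threshold, never larger. For $0<\gamma<\tfrac12$ one has $(KM_{1})^{-1/\gamma}<K^{-2}M_{1}^{-2}$, so replacing the former by the latter enlarges the ball and does not follow from your H\"older estimate.

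The paper instead observes that condition~\eqref{eq:Bed3} only requires a bound in $\B(L^{\infty})$, not in the full $\mc FY=\B(L^{\infty})\cap\B(V^{-1}B)$. In $\B(L^{\infty})$ the H\"older exponent can be taken equal to $\tfrac12$: tracing the computation~\eqref{eq:R0diff} one only needs $V$, not $V^{-1}B$, because the factor $|x_{0}|^{\rho}$ generated by the modulations $M_{\eta}$ and the kernel factor $|x_{0}-x_{1}|^{-1+\rho}$ can both be paired against $V\in B^{2\sigma}=B^{1+2\gamma}\subset B^{1}\embed L^{6/5,1}$. This gives $\|\hat S(\eta)-\hat S(\eta_{0})\|_{\B(L^{\infty})}\les|\eta-\eta_{0}|^{1/2}K$, from which $\eps_{1}=cK^{-2}M_{1}^{-2}$ follows directly. (The index $\sigma=\tfrac12+\gamma$ of the auxiliary space $V^{-1}B$ only restricts the H\"older exponent in $\B(V^{-1}B)$, which is irrelevant to~\eqref{eq:Bed3}.) Notice that your numerical conclusion~\eqref{eq:M37} would survive even with the stricter $\eps_{1}\sim(KM_{1})^{-1/\gamma}$, since in~\eqref{eq:Sinvnorm} the term $\eps_{1}^{-3}$ is dominated by $L_{0}^{3}M_{1}^{3}\|S\|_{Y}^{3}$; but the specific $\eps_{1}$ stated in~\eqref{eq:Qcond} is not justified by your argument and requires the $\rho=\tfrac12$ observation.
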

\begin{proof}
The choice of $M_{1}$ is dictated by \eqref{eq:Inv2*} and \eqref{eq:Ubd*}.  The H\"older bound~\eqref{eq:Hoelder} holds with $\rho=\frac12$:
\EQ{
\label{eq:Hoelder*}
\| \what{T_{1+}^{\eps}}(\eta) - \what{T_{1+}^{\eps}}(\tilde\eta) \|_{\mc FY}\les |\eta-\tilde\eta|^{\frac12} \|V\|_{B^{\sigma}} \ll M_{1}^{-1}
} 
by our choice of $\eps_{1}$. In view of \eqref{eq:LSZ} we need to take $L_{0}\ge \eps_{1}^{-1}$ for the $Z$-part of the $Y$-norm.  For the other part we
use~\eqref{eq:gamma schranke} and~\eqref{eq:LfT} to conclude that in total
\[
L_{0} =\eps^{-1}_{1}+ (M_{1}K)^{\frac{1}{\gamma}} 
\]
suffices. But for $\gamma\le\frac12$ this gives the choice above.  For \eqref{eq:Bed1}, we bound
\EQ{\lb{eq:StildeS}
& \| \eps_{0}^{-3}{\chi}(\cdot/\eps_{0})\ast S^{N} - S^{N}\|_{Y} \\
&\le \| \eps_{0}^{-3}{\chi}(\cdot/\eps_{0})\ast \tilde S^{N} - \tilde S^{N}\|_{Y} + 2\| S^{N} -\tilde S^{N}\|_{Y} \\
&\les \|\tilde S\|^{N-1} \|\eps_{0}^{-3}{\chi}(\cdot/\eps_{0})\ast \tilde S - \tilde S\|_{Y}  + N(\|S\|_{Y}+\|\tilde S\|_{Y})^{N-1}\|S-\tilde S\|_{Y}
}
where $\tilde S = \one_{[|y|\ge L_{1}]}S$, where $L_{1}$ needs to be determined (and the truncation is a smooth one). Note that we saw above that $N=11$ suffices, and that 
\[
 \|S-\tilde S\|_{Y} \les L_{1}^{-\gamma} K
\]
Hence, \eqref{eq:StildeS} yields
\[
\| \eps_{0}^{-3}{\chi}(\cdot/\eps_{0})\ast S^{N} - S^{N}\|_{Y} \les \eps_{0} L_{1}^{3} K^{10} + K^{11} L_{1}^{-\gamma} 
\] 
The first term on the right-hand side follows from \eqref{eq:L3}--\eqref{eq:krs7}.  The optimal choice is $\eps_{0}=KL_{1}^{-\gamma-3}$, and $L_{1}$ must then satisfy
\[
K^{11}L_{1}^{-\gamma}\ll 1 
\] 
which leads to $\eps_{0}$ above. 
\end{proof}

\begin{proof}[Proof of Theorem~\ref{thm:structQ}]
The quantitative estimate~\eqref{eq:gQ} is a restatement of~\eqref{eq:grand*}. 

Let $\tilde V$ be as in the statement of the theorem. The resolvent identity implies that for any $z$ in the upper half-plane
\[
(I + R_0(z) \tilde V)^{-1} = (I + R_0(z)  V)^{-1} - (I + R_0(z) V)^{-1} (\tilde V-V) (I + R_0(z) \tilde V)^{-1}
\]
whence in the operator norm on $L^{\infty}$
\EQ{\nn 
&\| (I + R_0(z) \tilde V)^{-1}\| \\&\le \| (I + R_0(z) V)^{-1}\|  + CM_{0}\|V-\tilde V\|_{L^{\frac32,1}} \| (I + R_0(z) \tilde V)^{-1}\|
}
By \eqref{eq:VVt}
\[
CM_{0}\|V-\tilde V\|_{L^{\frac32,1}} \le  CM_{0}\|V-\tilde V\|_{B^{1+2\gamma}} \le \frac12, 
\]
then 
\[
\sup_{z\not\in \R}\| (I + R_0(z) \tilde V)^{-1}\| \le 2M_{0}
\]
In particular, $H=-\Delta+\tilde V$ satisfies the $0$-energy condition and its structure function $\tilde g$ satisfies a bound comparable to~\eqref{eq:gQ} (by~\eqref{eq:VVt}). 

Note that $g$ and $\tilde g$ are each constructed in their own respective algebras, $Y$ and $\tilde Y$. The former is based on $V^{-1}B$, whereas the latter uses~$\tilde V^{-1}B$. However, if we set $U:= |V|+|\tilde V| \in B^{1+2\gamma}$, then 
\[
\| Vf\|_{B^{1+2\gamma}} + \| \tilde V f\|_{B^{1+2\gamma}} \les \| U f\|_{B^{1+2\gamma}}
\]
Consequently, we can carry out the construction of $g$ and $\tilde g$ simultaneously, using the algebra based on $U$.  We will denote this new algebra by~$Y$. 
Let $T_{1+}^{\eps}$ and $\tilde T_{1+}^{\eps}$ be the operators associated with $V$ and $\tilde V$, respectively. Recall from~\eqref{eq:waschno} that 
\EQ{\nn  
(I+T_{1+}^{\eps})^{-1} & =I-T_{+}^{\eps}, \qquad (I+\tilde T_{1+}^{\eps})^{-1}=I-\tilde T_{+}^{\eps}  
}
whence 
\EQ{
\tilde T_{+}^{\eps} - T_{+}^{\eps} &= (I+T_{1+}^{\eps})^{-1} - (I+\tilde T_{1+}^{\eps})^{-1} \\
&= (I+T_{1+}^{\eps})^{-1} \oast  ( T_{1+}^{\eps} -   \tilde T_{1+}^{\eps}   ) \oast  (I+\tilde T_{1+}^{\eps})^{-1}\\
&= (I-T_{+}^{\eps}) \oast  ( T_{1+}^{\eps} -   \tilde T_{1+}^{\eps}   ) \oast  (I-\tilde T_{+}^{\eps}) \\
\| \tilde T_{+}^{\eps} - T_{+}^{\eps} \|_{Y} &\les M_{2}^{2} \| V-\tilde V\|_{B^{1+2\gamma}} 
}
In the last line we used that same constant  $M_{2}$, up to an absolute factor,  controls $\|T_{+}^{\eps}\|_{Y}$ as well as $\|\tilde T_{+}^{\eps}\|_{Y}$, see~\eqref{eq:M37}.
In view of~\eqref{eq:waschno*}
\EQ{\label{eq:katze}
\sup_{\eps>0} \| {\frak X}_{+}^{\eps} - \tilde{\frak X}_{+}^{\eps} \|_{X} &\le \sup_{\eps>0} \| T_{+}^{\eps}-\tilde T_{+}^{\eps} \|_{Y}  \les M_{2}^{2}
K \|V-\tilde V\|_{B^{2\sigma}} <\infty
}
The calculations \eqref{eq:waschno*}--\eqref{eq:grand} yield 
\[
g^{\eps} - \tilde g^{\eps} = g_{1}^{\eps} -\tilde  g_{1}^{\eps} + h^{\eps} -\tilde h^{\eps}
\]
First, by linearity
\EQ{
\lb{eq:g1g1}
\int_{\Sph^{2}}  \| g_{1}^{\eps} -\tilde  g_{1}^{\eps}\|_{\mes_{y} L^\infty_x}  \, d\omega & \les \| V-\tilde V\|_{B^{2\sigma}}
}
For  $h^{\eps}-\tilde h^{\eps}$ we have, using the notation \eqref{eq:XVT}--\eqref{eq:gnmeasW}, 
\EQ{\label{eq:g shift}
 h^{\eps} -\tilde h^{\eps} &= g_{1,f^{\eps}_{y'}}^{\eps} - \tilde g_{1,\tilde f^{\eps}_{y'}}^{\eps} = g_{1,f^{\eps}_{y'}-\tilde f^{\eps}_{y'}}^{\eps} + (g-\tilde g)^{\eps}_{1,f^{\eps}_{y'}} \\
  f^{\eps}_{y'}(x') & := \frak X_{+}^{\eps}(x',y'),\qquad    \tilde f^{\eps}_{y'}(x')  := \tilde{\frak X}_{+}^{\eps}(x',y') 
}
and $\tilde g_{1}$ refers to the structure functions defined in terms of $\tilde V$ instead of~$V$.  By~\eqref{eq:gnmeasW}, 
\EQ{
& \quad \sup_{\eps>0} \int_{\R^{3}} \int_{\Sph^2 }  \| g_{1,f^{\eps}_{y'}-\tilde f^{\eps}_{y'}}^{\eps}(x,dy,\omega)\|_{\mes_{y} L^\infty_x}  \, d\omega dy' \\ 
&\les \sup_{\eps>0}  
  \| \frak X_{+}^{\eps} - \tilde{\frak X}_{+}^{\eps}\|_{L^{1}_{y}U^{-1}B_{x'}}       \les  \sup_{\eps>0} \|U\|_{B^{\sigma}} \| T_{+}^{\eps} - \tilde T_{+}^{\eps}\|_{Y} \\
  & \les M_{2}^{2}
K^{2} \|V-\tilde V\|_{B^{2\sigma}} 
}
by \eqref{eq:katze}.  The second term in~\eqref{eq:g shift} is bounded by 
\EQ{\label{eq:weh}
& \quad \sup_{\eps>0} \int_{\R^{3}} \int_{\Sph^2 }  \| (g-\tilde g)^{\eps}_{1,f^{\eps}_{y'}} (x,dy,\omega)\|_{\mes_{y} L^\infty_x}  \, d\omega dy'  \\
&\les \sup_{\eps>0} \int_{\R^{3}}   \| (V-\tilde V)(x) \frak{X}_{+}^{\eps}(x,y)\|_{B^{\sigma}_{x}} \,  dy\\
&\les \sup_{\eps>0} \int_{\R^{3}}   \| (V-\tilde V)U^{-1}\one_{[U\ne0]} \|_{\infty }   \| \frak{X}_{+}^{\eps}(x,y) \|_{U^{-1}B_{x}} \, dy  \\
&\les  \| (V-\tilde V)U^{-1}\one_{[U\ne0]} \|_{\infty } M_{2}K
}
see \eqref{eq:waschno*}.  In summary, this concludes the proof of Theorem~\ref{thm:structQ}. 
\end{proof}

For the sake of completeness,  we now  return to the $L^{\infty}_{x}L^{1}_{y}$-norm that we could not include in~$Y$, see~\eqref{eq:T1 ext}. 
From~\eqref{eq:T+eps}, we have 
\EQ{\lb{eq:TTT}
T_{+}^{\eps} = T_{1+}^{\eps} - T_{1+}^{\eps} \oast  T_{+}^{\eps}
}
From \eqref{eq:T1 ext} we have 
\[
\| T_{1+}^{\eps}\|_{\B(U^{-1}B,L^{\infty}L^{1}_{y})}  \les \|V\|_{B^{2\sigma}}
\]
Since 
\[
\| T_{+}^{\eps} \|_{\B(U^{-1}B,L^{1}_{y}U^{-1}B)}  \les M_{2}
\]
we conclude that  (using the contraction notation against a Schwartz function $f$)
\EQ{\nn 
\| f (T_{1+}^{\eps} \oast  T_{+}^{\eps})  \|_{L^{\infty}_{x_{2}}L^{1}_{y}}  &=  \Big\| \int_{\R^{6}}  T_{1+}^{\eps}(x_{1},x_{2},y-y')    (fT_{+})(x_{1},y') \, dx_{1} dy' \Big\|_{L^{\infty}_{x_{2}}L^{1}_{y}} \\
& =  \Big\| \int_{\R^{3}}  ( (fT_{+})(\cdot,y')T_{1+}^{\eps}) (x_{2},y-y')    \,  dy' \Big\|_{L^{\infty}_{x_{2}}L^{1}_{y}}  \\
&\le \int_{\R^{3}}  \big\|   ( (fT_{+})(\cdot,y')T_{1+}^{\eps}) (x_{2},y)  \big\|_{L^{\infty}_{x_{2}}L^{1}_{y}}   \,  dy'   \\
&\le  \int_{\R^{3}} \| T_{1+}^{\eps}\|_{\B(U^{-1}B,L^{\infty}L^{1}_{y})}   \| (fT_{+})(\cdot,y') \|_{U^{-1}B}  \,  dy'    \\
&\le \| T_{1+}^{\eps}\|_{\B(U^{-1}B,L^{\infty}L^{1}_{y})}  \| T_{+}  \|_{\B(U^{-1}B,L^{1}_{y}U^{-1}B)} \|f\|_{U^{-1}B}
}
So it follows that 
\[
\| T_{1+}^{\eps} \oast  T_{+}^{\eps} \|_{\B(U^{-1}B,L^{\infty}L^{1}_{y})}  \les  M_{2}  \|V\|_{B^{2\sigma}}
\]
which in conjunction with \eqref{eq:TTT} implies that 
\EQ{\lb{eq:T+extend}
\| T_{+}^{\eps}\|_{\B(U^{-1}B,L^{\infty}L^{1}_{y})}  \les M_{2}\|V\|_{B^{2\sigma}} 
}
This shows that {\em after the fact}, this norm is also controlled. But it cannot be included in~$Y$ to begin with, since this would render the Wiener theorem above inapplicable (as the asymptotic vanishing in $y$ were then to fail in the $Y$ norm). 
Also note that this does not improve~\eqref{eq:weh}, since factoring  out $\| V-\tilde V\|_{B^{\sigma}}$ would require finiteness of the $L^{1}_{y}L^{\infty}_{x}$-norm, which is not attainable.

\end{document}